\newlength{\defbaselineskip}
\newcommand{\setlinespacing}[1]           {\setlength{\baselineskip}{#1 \defbaselineskip}}
\theoremstyle{plain}
\newtheorem{lemma}{Lemma}[section]
\newtheorem{theorem}[lemma]{Theorem}
\newtheorem{proposition}[lemma]{Proposition}
\newtheorem{corollary}[lemma]{Corollary}
\newtheorem*{theorem*}{Theorem}
\newtheorem*{maintheorem*}{Main Theorem}
\theoremstyle{definition}
\newtheorem{definition}[lemma]{Definition}
\newtheorem*{definition*}{Definition}
\newtheorem{remark}[lemma]{Remark}
\numberwithin{equation}{section}
\newcommand{\mms}{m.m.s\@ifnextchar.{}{\@ifnextchar'{.}{\@ifnextchar){.}{.~}}}}
\newcommand{\ubar}[1]{\underaccent{\bar}{#1}}
\newcommand{\R}{\mathbb{R}}
\newcommand{\B}{\mathcal{B}}
\newcommand{\Q}{\mathcal{Q}}
\newcommand{\T}{\mathcal{T}}
\newcommand{\C}{\mathbb{C}}
\newcommand{\D}{\mathcal{D}}
\newcommand{\Haus}{\mathcal{H}}
\renewcommand{\L}{\mathcal{L}}
\newcommand{\F}{\mathcal{F}}
\newcommand{\RCD}{\mathsf{RCD}}
\newcommand{\CD}{\mathsf{CD}}
\newcommand{\QCD}{\mathsf{QCD}}
\newcommand{\RQCD}{\mathsf{RQCD}}
\newcommand{\CGTD}{\mathsf{CGTD}}
\newcommand{\RCGTD}{\mathsf{RCGTD}}
\newcommand{\geo}{{\rm Geo}}
\newcommand{\MCP}{\mathsf{MCP}}
\newcommand{\OptGeo}{{\mathrm{OptGeo}}}
\newcommand{\Opt}{{\mathrm{Opt}}}
\newcommand{\blambda}{\lambda}  
\newcommand{\Id}{{\rm Id}}
\newcommand{\diam}{\mathop{\rm diam}\nolimits} 
\newcommand{\supp}{\mathop{\rm supp}\nolimits}   \newcommand{\conv}{\mathop{\rm geo}\nolimits}      
\newcommand{\e}{{\rm e}}
\renewcommand{\r}{{\rm r}}
\DeclareMathOperator*{\relint}{relint}
\DeclareMathOperator*{\intt}{int}
\newcommand{\norm}[1]{\left\Vert#1\right\Vert}
\newcommand{\abs}[1]{\left\vert#1\right\vert}
\newcommand{\set}[1]{\left\{#1\right\}}
\newcommand{\brac}[1]{\left(#1\right)}
\newcommand{\Real}{\mathbb{R}}
\newcommand{\eps}{\varepsilon}
\renewcommand{\L}{\mathcal{L}}
\newcommand{\vol}{\textrm{Vol}}
\renewcommand{\P}{\mathbb P}
\renewcommand{\P}{\mathcal{P}}
\newcounter{mycounter}
\renewcommand{\H}{\mathbb{H}}
\newcommand{\mm}{\mathfrak m}
\newcommand{\qq}{\mathfrak q}
\newcommand{\ee}{{\rm e}}
\newcommand{\QQ}{\mathfrak Q}
\renewcommand{\d}{\mathsf d}
\providecommand{\keywords}[1]{\textbf{\textit{Keywords:}} #1}
\author{
Emanuel Milman\thanks{Mathematics Department, Technion - I.I.T., Haifa 32000, Israel. Email: emilman@tx.technion.ac.il. \\
\textrm{The research leading to these results is part of a project that has received funding from the European Research Council (ERC) under the European Union's Horizon 2020 research and innovation programme (grant agreement No 637851).}
}}
\date{}                     \setcounter{Maxaffil}{0}
\title{The Quasi Curvature-Dimension Condition\\with applications to sub-Riemannian manifolds}
\begin{document}
\pagestyle{fancy}
\thispagestyle{empty}

\maketitle

\bibliographystyle{plain}

\begin{abstract}
We obtain the best known quantitative estimates for the $L^p$-Poincar\'e and log-Sobolev inequalities on domains in various sub-Riemannian manifolds, including ideal Carnot groups and in particular ideal generalized H-type Carnot groups and the Heisenberg groups, corank $1$ Carnot groups, the Grushin plane, and various H-type foliations, Sasakian and $3$-Sasakian manifolds. Moreover, this constitutes the first time that a quantitative estimate independent of the dimension is established on these spaces. For instance, the Li--Yau / Zhong--Yang spectral-gap estimate holds on all Heisenberg groups of arbitrary dimension up to a factor of $4$.

We achieve this by introducing a quasi-convex relaxation of the Lott--Sturm--Villani $\CD(K,N)$ condition we call the Quasi Curvature-Dimension condition $\QCD(Q,K,N)$.
Our motivation stems from a recent interpolation inequality along Wasserstein geodesics in the ideal sub-Riemannian setting due to Barilari and Rizzi.
We show that on an ideal sub-Riemannian manifold of dimension $n$, the Measure Contraction Property $\MCP(K,N)$ implies $\QCD(Q,K,N)$ with $Q = 2^{N-n} \geq 1$, thereby verifying the latter property on the aforementioned ideal spaces; a result of Balogh--Krist\'aly--Sipos is used instead to handle non-ideal corank $1$ Carnot groups. 
 By extending the localization paradigm to completely general interpolation inequalities, we reduce the study of various analytic and geometric inequalities on $\QCD$ spaces to the one-dimensional case. 
Consequently, we deduce that while (strictly) sub-Riemannian manifolds do not satisfy any type of $\CD$ condition, many of them satisfy numerous functional inequalities with \emph{exactly the same} quantitative dependence (up to a factor of $Q$) as their $\CD$ counterparts. 
\end{abstract}

\keywords{Curvature-Dimension condition, sub-Riemannian manifolds, Heisenberg group, Carnot groups, Optimal Transport, Poincar\'e and log-Sobolev inequalities, Localization.}

\section{Introduction}

The Curvature-Dimension condition $\CD(K,N)$ was first introduced in the 1980's by Bakry and \'Emery \cite{BakryEmery,BakryStFlour} in the context of diffusion generators, having in mind primarily the setting of weighted Riemannian manifolds, namely smooth Riemannian manifolds endowed with a smooth density with respect to the Riemannian volume. The $\CD(K,N)$ condition serves as a generalization of the classical condition in the non-weighted Riemannian setting of having Ricci curvature bounded below by $K \in \Real$ and dimension bounded above by $N \in [1,\infty]$ (see e.g. \cite{EMilmanNegativeDimension,Ohta-NegativeN} for further possible extensions). Numerous consequences of this condition 
have been obtained over the past decades, extending results from the classical non-weighted setting and at times establishing new ones directly in the weighted one. These include diameter bounds, volume comparison theorems, heat-kernel and spectral estimates, Harnack inequalities, topological implications, Brunn--Minkowski-type inequalities, and isoperimetric, functional and concentration inequalities -- see e.g. \cite{Ledoux-Book,BGL-Book,VillaniOldAndNew} and the references therein. 

\smallskip
Being a differential and Hilbertian condition, it was for many years unclear how to extend the Bakry--\'Emery definition beyond the smooth Riemannian setting. A satisfactory definition was finally found based on the theory of Optimal Transport \cite{AmbrosioLectureNotesOT,AmbrosioGigli-UsersGuide,EvansSurveyOT,McCannGuillenLectureNotes,RachevRuschendorf-Book,UrbasLectureNotesOT,VillaniTopicsInOptimalTransport,VillaniOldAndNew}. Given two probability measures $\mu_0,\mu_1$ on a common geodesic space $(X,\d)$ and a prescribed cost of transporting a single mass from point $x$ to $y$, the Monge-Kantorovich idea is to optimally couple $\mu_0$ and $\mu_1$ by minimizing the total transportation cost, and as a byproduct obtain a Wasserstein geodesic $[0,1] \ni t \mapsto \mu_t$ connecting $\mu_0$ and $\mu_1$ in the space of probability measures $\P(X)$. This gives rise to the notion of displacement convexity of a given functional on $\P(X)$ along Wasserstein geodesics, introduced and studied by McCann \cite{McCannConvexityPrincipleForGases}. Following the works of Cordero-Erausquin--McCann--Schmuckenschl{\"a}ger \cite{CMSInventiones}, Otto--Villani \cite{OttoVillaniHWI} and von Renesse--Sturm \cite{VonRenesseSturmRicciChar}, it was realized that the $\CD(K,\infty)$ condition in the smooth setting may be equivalently formulated synthetically as a certain convexity property of an entropy functional along $W_2$ Wasserstein geodesics (associated to $L^2$-Optimal-Transport, when the transport-cost is given by the squared-distance function).

\smallskip
This idea culminated in the seminal works of Lott, Sturm and Villani \cite{LottVillaniGeneralizedRicci,SturmCD1,SturmCD2}, where a synthetic definition of $\CD(K,N)$ was proposed on a general (complete, separable) metric space $(X,\d)$ endowed with a (locally-finite Borel) reference measure $\mm$ (``metric-measure space"); it was moreover shown that the latter definition coincides with the Bakry--\'Emery one in the smooth Riemannian setting (and in particular in the classical non-weighted one), that it is stable under measured Gromov-Hausdorff convergence, and that it implies various geometric and analytic inequalities relating metric and measure, in complete analogy with the smooth setting. It was subsequently also shown \cite{Ohta-CDforFinsler,Petrunin-CDforAlexandrov} that Finsler manifolds and Alexandrov spaces satisfy the Curvature-Dimension condition. 
Thus emerged an overwhelmingly convincing notion of Ricci curvature lower bound $K$ and dimension upper bound $N$ on metric-measure spaces, leading to a rich and fruitful theory exploring the geometry of such spaces by means of Optimal Transport.

\smallskip
However, one interesting setting in which the $\CD$ theory is not applicable (at least, not directly) is the sub-Riemannian one. It was first shown by Juillet \cite{Juillet-HeisenbergMCP} that the $d$-dimensional Heisenberg group $\H^d$, which is the simplest example of a non-trivial sub-Riemannian manifold, equipped with the Carnot--Carath\'eodory metric and left-invariant Lebesgue measure, does not satisfy the $\CD(K,N)$ condition for \emph{any} $K,N \in \R$. In \cite{Juillet-SubRiemannianNotCD}, Juillet extended this observation to completely general (strictly) sub-Riemannian manifolds (in which the rank of the distribution is nowhere maximal) equipped with an arbitrary smooth positive measure. On the other hand, Juillet showed in \cite{Juillet-HeisenbergMCP} that the Heisenberg group $\H^d$ (of topological dimension $n=2d+1$) does satisfy the property $\MCP(0,N)$ for $N = n+2$. The latter is a particular case of the Measure Contraction Property $\MCP(K,N)$,  introduced  independently by Ohta \cite{Ohta-MCP} and Sturm \cite{SturmCD2} as a weaker variant of the $\CD(K,N)$ condition. 
More general Carnot groups were subsequently shown to satisfy $\MCP(0,N)$ for appropriate $N$ by Rifford, Barilari and Rizzi \cite{Rifford-CarnotMCP, Rizzi-MCPonCarnot,BarilariRizzi-MCPonHTypeCarnot}. Additional examples of sub-Riemannian spaces verifying $\MCP$ (but not $\CD$) have been found in \cite{BadreddineRifford,BarilariRizzi-BE,BarilariRizzi-Interpolation,BGMR-HType,LeeLiZelenko}, such as generalized H-type groups, the Grushin plane, and various H-type foliations, Sasakian and $3$-Sasakian structures. 
\smallskip

In the past year, the study of $\MCP$ spaces has seen some increased activity, starting from the work of Cavalletti and Santarcangelo \cite{CavallettiSantarcangeloMCP} who obtained sharp isoperimetric inequalities, and continuing with the work of Han--Milman \cite{HanEMilman-MCP-Poincare} and Han \cite{Han-MCP-pPoincare} who obtained sharp Poincar\'e and $L^p$-Poincar\'e inequalities, respectively, for $\MCP(K,N)$ spaces whose diameter is upper-bounded by $D \in (0,\infty)$. While these results are sharp for the class of $\MCP$ spaces, as witnessed by equipping $(\R,|\cdot|)$ with an appropriate measure $\mm$, it remained unclear whether they provide good quantitative estimates for the above specific examples from the sub-Riemmanian setting, which certainly have
more structure than general $\MCP$ spaces. Moreover, the recent Jacobian interpolation inequalities \`a la Cordero-Erausquin--McCann--Schmuckenschl{\"a}ger \cite{CMSInventiones}, obtained by Balogh, Krist\'aly and Sipos  \cite{BKS-HeisenbergInterpolation,BKS-CorankOneCarnot} for the Heisenberg group and more general corank $1$ Carnot groups and by Barilari and Rizzi \cite{BarilariRizzi-Interpolation} in the ideal sub-Riemannian setting (see below), strongly suggest that more information can be extracted in these cases than by merely employing the $\MCP$ property.

\medskip
In this work, we introduce a new property we call Quasi Curvature-Dimension $\QCD(Q,K,N)$ ($Q \geq 1$), which constitutes a ``quasi-convex" relaxation of the $\CD(K,N)$ condition (the latter is recovered when the ``slack" parameter $Q$ is set to $1$), and serves as a bridge between the $\CD$ and $\MCP$ conditions. We draw our nomenclature from the theory of quasi-Banach spaces -- recall that a 1-homogeneous functional $\norm{\cdot}$ on a linear space $E$ is called a quasi-norm if $\exists Q \geq 1$ so that:
\[
\norm{(1-t) x_0 + t x_1} \leq Q \brac{ (1-t) \norm{x_0} + t \norm{x_1}} \;\;\; \forall x_0,x_1 \in E \;\;\; \forall t \in (0,1). 
\]
Roughly speaking, our main results in this work are as follows:
\begin{itemize}
\item In the above sub-Riemannian examples, and more generally, whenever an appropriate $(X,\d,\mm)$ satisfies a ``Jacobian" interpolation inequality and $\MCP(K,N)$ holds, then $\QCD(Q,K,N)$ holds as well with $Q = 2^{N-n}$, where $n$ denotes the topological dimension (in fact, modulo the results of \cite{BKS-HeisenbergInterpolation,BKS-CorankOneCarnot,BarilariRizzi-Interpolation}, this will be essentially trivial). 
This extends the well-known fact \cite[Corollary 5.5]{SturmCD2} that when $N=n$ (so that $Q = 2^{N-n} = 1$), the $\MCP(K,n)$ condition on unweighted Riemannian manifolds is equivalent to the $\QCD(1,K,n) = \CD(K,n)$ condition (i.e. to a lower bound $K$ on the Ricci curvature). 
\item Any property of $\CD(K,N)$ spaces which is amenable to \emph{localization} and in dimension one is stable under perturbations, also holds (up to constants depending only on $Q$) for $\QCD(Q,K,N)$ spaces (which are in addition essentially non-branching and also satisfy the $\MCP(K',N')$ condition for some $K' \in \R$ and $N' \in (1,\infty)$ -- see below for more details). For example, this applies to $L^p$-Poincar\'e inequalities, Sobolev and log-Sobolev inequalities, as well as isoperimetric inequalities. 
\end{itemize}
Consequently, we deduce that while (strictly) sub-Riemannian manifolds do not satisfy any type of $\CD$ condition, in the ideal and corank $1$ Carnot group settings, they satisfy (up to constants) most geometric and analytic properties as their $\CD$ counterparts. Moreover, the latter constants do not directly depend on the topological dimension $n$ nor the geodesic dimension $N$, but rather on their difference via the formula $Q = 2^{N-n}$, and so for any family of spaces for which $N-n$ is bounded above, they are dimension-independent (!); for example, $Q = 2^{N-n} = 4$ for all $d$-dimensional Heisenberg groups $\H^d$, regardless of $d$. 

\medskip

As a taste of the type of results one can obtain using these observations, we state the following consequence of our main Theorem \ref{thm:intro-main}. We refer to the next sections for precise definitions, and at this point only introduce the notation $\conv(\Omega)$ for the geodesic hull of a set $\Omega$, namely the union of all geodesics starting at $x \in \Omega$ and ending at $y \in \Omega$. Note that $\conv(\Omega)$ need not be geodesically convex, and that $\conv(B_r(x)) \subset B_{2r}(x)$ by the triangle inequality (where $B_r(x)$ denotes a geodesic ball around $x$ of radius $r > 0$) -- the latter is the prototypical example the reader should bear in mind below.

\begin{theorem} \label{thm:main-application}
Let $X$ be an ideal generalized H-type group of dimension $n$ and corank $k$, equipped with its Carnot--Carath\'eodory metric $\d$ and canonical left-invariant volume measure $\mm$. Then for all closed subsets $\Omega \subset X$ with $\diam(\Omega) \leq D < \infty$ and for any (locally) $\d$-Lipschitz function $f : (X,\d) \rightarrow \R$:
\begin{itemize}
\item The following Poincar\'e inequality holds:
\begin{equation} \label{eq:intro-Poincare}
\int_{\Omega} f \mm = 0  \;\; \Rightarrow \;\; \frac{\pi^2}{4^k D^2} \int_{\Omega} f^2 \mm \leq \int_{\conv(\Omega)} |\nabla_{X} f|^2 \mm .
\end{equation}
\item More generally, the following $L^p$-Poincar\'e inequality holds for any $p \in (1,\infty)$:
\[
\int_{\Omega} |f|^{p-2} f \mm = 0  \;\; \Rightarrow \;\; \frac{p-1}{4^k} \brac{\frac{2 \pi}{p \sin(\pi/p) D}}^p  \int_{\Omega} |f|^p \mm \leq \int_{\conv(\Omega)} |\nabla_{X} f|^p \mm .
\]
\item The following log-Sobolev inequality holds (for some universal numeric $C > 1$):
\[
\int_{\Omega} (f^2 - 1) \mm = 0 \;\; \Rightarrow \;\; \frac{\pi^2}{4^k 2 C D^2} \int_{\Omega} f^2 \log(f^2) \mm \leq \int_{\conv(\Omega)} |\nabla_{X} f|^2 \mm .
\]
\end{itemize}
In particular, this applies to all Heisenberg groups $\H^d$ with $k=1$ (independently of $d$). 
\end{theorem}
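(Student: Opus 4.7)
The plan is to combine two ingredients from the excerpt. First, the promotion of $\MCP$ to $\QCD$ stated in the introduction: in the ideal sub-Riemannian setting, $\MCP(K,N)$ implies $\QCD(Q,K,N)$ with $Q = 2^{N-n}$, where $n$ is the topological dimension. An ideal generalized H-type group $(X,\d,\mm)$ of corank $k$ is known to be essentially non-branching and to satisfy $\MCP(0,n+2k)$, so the promotion yields $\QCD(4^k,0,n+2k)$ on $X$. Crucially, the slack parameter $Q = 2^{N-n} = 2^{2k} = 4^k$ depends only on the corank, not on $n$, which is the source of the dimension-free constants.

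Next, I would apply the extension of the localization paradigm to $\QCD$ spaces (Theorem \ref{thm:intro-main}). For each of the three balance conditions, one uses $L^1$-Optimal Transport with a Kantorovich potential associated to the signed measure $f\,\mm$ (respectively $|f|^{p-2}f\,\mm$ and $(f^2-1)\,\mm$) to disintegrate $\mm \llcorner_{\conv(\Omega)}$ along a family of minimizing transport rays $\{X_q\}_{q \in Q}$, with quotient measure $\qq$. Since the endpoints of each ray lie in $\Omega$, we have $X_q \subset \conv(\Omega)$ and $\diam(X_q) \leq D$, and the inherited 1D structure is $\QCD(4^k,0,n+2k)$: the conditional density $\rho_q$ satisfies that $\rho_q^{1/(N-1)}$ is $Q$-quasi-concave along the ray.

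The problem thus reduces to the three one-dimensional inequalities on $\QCD(4^k,0,N)$ needles of diameter at most $D$. In the genuine $Q=1$ case, these are well-known sharp inequalities on $\CD(0,N)$ needles: the Zhong--Yang / Li--Yau spectral gap $\pi^2/D^2$, Han's sharp $L^p$-Poincar\'e constant $(p-1)(2\pi/(p\sin(\pi/p) D))^p$, and a log-Sobolev constant of the form $\pi^2/(2CD^2)$ for a universal $C>1$. The key point is that replacing genuine $\frac{1}{N-1}$-concavity of the density by $Q$-quasi $\frac{1}{N-1}$-concavity is a stable perturbation in dimension one that degrades each of these sharp constants by exactly a factor of $Q = 4^k$; this is a direct check on the 1D variational problem. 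Integrating the degraded 1D inequalities against $\qq$ and using that the 1D gradient is dominated pointwise by $|\nabla_X f|$ on $\conv(\Omega)$ reassembles the three global inequalities in the statement.

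The main technical obstacle is the extension of the Klartag / Cavalletti--Mondino $L^1$-localization from $\CD$ to $\QCD$ spaces, and specifically the verification that the needle densities produced by the disintegration inherit quasi-concavity with the same parameter $Q$ as the ambient space. This requires using essential non-branching and an auxiliary $\MCP(K',N')$ bound (both available here) to ensure a well-defined transport-ray structure, and then tracking that the ``slack'' $Q$ in the $\QCD$ interpolation inequality passes to the 1D conditional densities without amplification. Once this step is carried out, the remaining 1D analysis is a routine stability exercise applied to known sharp $\CD(0,N)$ inequalities, and the appearance of $\conv(\Omega)$ (rather than $\Omega$) on the right-hand side is forced by the fact that transport rays may leave $\Omega$ but, by definition of the geodesic hull, remain inside $\conv(\Omega)$.
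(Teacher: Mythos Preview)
Your proposal is correct and follows essentially the same route as the paper: establish $\QCD(4^k,0,n+2k)$ via the $\MCP \Rightarrow \QCD$ promotion (Corollary \ref{cor:H-group-QCD}), then invoke Theorem \ref{thm:intro-main}, whose proof localizes to one-dimensional $\QCD$ densities and compares them to $\CD$ densities. Two small points where you are imprecise: (i) the step you call ``a direct check on the 1D variational problem'' is handled in the paper by Proposition \ref{prop:QCD-CD}, which constructs for any one-dimensional $\QCD(Q,K,N)$ density $h$ a $\CD(K,N)$ density $f$ with $h \leq f \leq Qh$ (the ``$\CD$ upper envelope''), after which the factor-$Q$ degradation is immediate from the stability Lemma \ref{lem:mon-stable}; and (ii) the transport rays $X_q$ need not have endpoints in $\Omega$ nor lie inside $\conv(\Omega)$---rather, one restricts to the segment $L_q := \conv_{X_q}(X_q \cap \supp(g\mm))$, which does satisfy $L_q \subset \conv(\Omega)$ and $\diam(L_q) \leq D$.
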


Analogous results hold for ideal Carnot groups, the (ideal) Grushin plane,  (ideal) Sasakian and $3$-Sasakian manifolds (under appropriate curvature lower bounds), 
 (ideal) H-type foliations with completely parallel torsion and non-negative horizontal sectional curvature, and general (possibly non-ideal) Carnot groups of corank $1$ -- see Section \ref{sec:results}.
To put these results into context, note that the Poincar\'e inequality (\ref{eq:intro-Poincare}) on the Heisenberg group $\H^d$ coincides up to a factor of $4$ with the celebrated Li--Yau / Zhong--Yang sharp spectral-gap estimate \cite{LiYauEigenvalues,ZhongYangImprovingLiYau}, which applies to geodesically convex subsets of $\CD(0,N)$ spaces \cite{BakryQianSharpSpectralGapEstimatesUnderCDD,KlartagLocalizationOnManifolds,CavallettiMondino-LocalizationApps}. Instead of assuming that $\Omega$ is geodesically convex, we use an arbitrary set $\Omega$ but take its geodesic hull $\conv(\Omega)$ on the energy side of the inequality -- this variant, originating in our previous work with B.~Han \cite{HanEMilman-MCP-Poincare}, is crucial in the sub-Riemannian setting, where non-trivial geodesically convex sets are known to be scarce; for instance, even for the simplest case of the Heisenberg group $\H^1$, it was shown in \cite{MontiRickly} that the smallest geodesically convex set containing three distinct points which do not lie on a common geodesic is $\H^1$ itself, implying in particular that there are no non-trivial geodesically convex balls in $\H^1$. 
Similarly, up to the factor of $4^k$, our estimates for the $L^p$-Poincar\'e inequality (spectral-gap of the $p$-Laplacian) and for the log-Sobolev inequality are known to be best possible on geodesically convex subsets of $\CD(0,N)$ spaces.

To the best of our knowledge, Theorem \ref{thm:main-application} entails the 
best known quantitative estimates for 
the $L^p$-Poincar\'e and log-Sobolev inequalities in the above mentioned sub-Riemannian setting,
and moreover, constitutes the first time that a dimension-independent quantitative estimate (not depending on $n$) has been established on the above spaces. 
 
While the validity of a local Poincar\'e inequality in the sub-Riemannian setting is well-known, starting from the work of D.~Jerison on vector fields satisfying H\"{o}rmander's condition \cite{Jerison-HormanderCondition} (see also \cite{DongLuSun} and the references therein), we are almost not aware of any explicit constants in any of these inequalities.
This includes the sub-elliptic Curvature-Dimension approach developed by Baudoin--Garofalo \cite{BaudoinGarofalo-SubRiemannianCD}, which was used by Baudoin--Bonnefont--Garofalo  in \cite[Theorem 4.2]{BBG-PoincareOnSubRiemannianViaCD} to obtain a local Poincar\'e inequality on various sub-Riemannian manifolds satisfying a \emph{non-negative} generalized Ricci curvature bound -- namely, for $\Omega = B_r(x)$ and with $B_{2r}(x)$ on the energy-side of the Poincar\'e inequality (in place of $\conv(B_r(x))$), these authors obtained a Poincar\'e constant of the form $\frac{C}{r^2}$ for all $r > 0$ and some non-explicit constant $C > 0$ depending on various additional curvature parameters and the underlying dimension. Note that by \cite{SobolevMetPoincare}, it is always possible to tighten (i.e. replace $B_{2r}(x)$ by $B_r(x)$ on the energy-side) a local $L^p$-Poincar\'e inequality on any length-space (see also \cite{Jerison-HormanderCondition}), but this would result in a further loss of explicit constants and dependence on the underlying dimension (via the doubling constant). The results of \cite{BBG-PoincareOnSubRiemannianViaCD} were extended to a possibly negative generalized Ricci bound by Kim \cite[Theorem 1.1]{Kim-GlobalPoincareForNegativelyCurvedSubRiemannian}. 
We remark that when the generalized Ricci curvature is \emph{strictly positive} in the sense of \cite{BaudoinGarofalo-SubRiemannianCD}, the global situation is simpler as the underlying measure is necessarily finite, and a global Poincar\'e as well as (a variant of) a log-Sobolev inequality, with explicit constants, were obtained by Baudoin--Bonnefont in \cite{BaudoinBonnefont-SGandLSForPositiveRicci}; however, it is not clear how to localize these estimates to geodesic balls. For gradient estimates on the heat-kernel on the Heisenberg group and its associated (global) Poincar\'e and log-Sobolev inequalities, see \cite{DriverMelcher-HeisenbergHeatKernel,Li-HeisenbergHeatKernel,  BBBC-HeisenbergHeatKernel, HebischZegarlinski-LogSobolevOnHTypeGroups, BonnefontChafaiHerry-HeisenbergHeatKernelLogSob}. 

The only prior \emph{explicit} estimates we are aware of for Poincar\'e and $L^p$-Poincar\'e inequalities on geodesic balls in the sub-Riemannian setting were just recently obtained in \cite{HanEMilman-MCP-Poincare} and \cite{Han-MCP-pPoincare}, respectively, but these only employed the $\MCP$ information, and thus are inevitably worse than the estimates of Theorem \ref{thm:main-application} by a factor \emph{exponential in the dimension $n$}.

\bigskip

We refer the reader to the next section for the definition of the $\QCD(Q,K,N)$ condition and statement of our main results. 
The rest of this work is organized as follows. In Section \ref{sec:prelim}, we recall some preliminaries from sub-Riemannian geometry and the theory of Optimal Transport. In Section \ref{sec:localization}, we prove a localization theorem for general interpolation coefficients. In Section \ref{sec:1D-QCD}, we study one-dimensional $\QCD$ densities. In Section \ref{sec:inqs}, we prove our main result on the equivalence (up to a factor of $Q$) between the best constants in various functional inequalities on $\QCD$ spaces and their $\CD$ counterparts. In Section \ref{sec:conclude} we provide some concluding remarks.

\bigskip
\noindent
\textbf{Acknowledgments.} I thank Fabrice Baudoin, Bangxian Han, Alexandru Krist\'aly and Luca Rizzi for their interest, comments and for providing additional references I was not aware of. 
I also thank the anonymous referee for their careful reading of the manuscript and detailed comments.

\section{Statement of the results}  \label{sec:results}

\subsection{Curvature via Interpolation}

The starting point of this work is the following interpolation inequality along $W_2$ geodesics.
It will be more convenient to state it using a dynamical plan $\nu$, namely a probability measure on $\geo(X,\d)$, the space of constant speed geodesics $\gamma$ parametrized on the unit-interval $[0,1]$. It is known that any $W_2$ geodesic $(\mu_t)_{t \in [0,1]}$ can be lifted to an optimal dynamical plan $\nu$ so that $(\e_t)_\sharp \nu  = \mu_t$ for all $t \in [0,1]$, where $\e_t(\gamma) = \gamma_t$ denotes the evaluation map. 

We will say that a metric-measure space $(X,\d,\mm)$ is \emph{Monge} if for any two probability measures with finite second moments $\mu_0,\mu_1 \in \P_2(X)$ with $\mu_0 \ll \mm$ and $\supp(\mu_1) \subset \supp(\mm)$, there exists a unique $W_2$ geodesic $[0,1] \ni t \mapsto \mu_t \in \P(X)$ connecting $\mu_0,\mu_1$, it is given by a map (there exists $S : X \rightarrow \geo(X,\d)$  so that $\nu = S_{\sharp} \mu_0$ is the associated optimal dynamical plan), and $\mu_t = (\e_t)_{\sharp} \nu \ll \mm$ for all $t \in [0,1)$. We refer to Section \ref{sec:prelim} for missing definitions and assertions, and only presently remark that in this work, a geodesic is always meant to mean minimizing geodesic, and that $\P_c(X)$ denotes the space of (Borel) probability measures on $X$ with bounded support.

Let $(\D,g)$ denote a sub-Riemannian structure on a smooth $n$-dimensional connected manifold $M$, and let $\d$ denote the associated Carnot--Carath\'eodory sub-Riemannian metric. Assume that $(M,\D,g)$ is ideal, namely that it admits no non-trivial abnormal geodesics and that $(M,\d)$ is complete. 
 Let $\mm$ denote a measure with smooth positive density with respect to some (any) volume measure on $M$. It follows from the work of McCann \cite{McCannOTOnManifolds} and Cordero-Erausquin--McCann--Schmuckenschl{\"a}ger \cite{CMSInventiones} in the complete Riemannian setting and of Figalli and Rifford \cite{FigalliRifford-subRiemannian} in the ideal sub-Riemannian one that $(M,\d,\mm)$ is a Monge space. 
  The following interpolation inequality was first established in the Riemannian setting by Cordero-Erausquin--McCann--Schmuckenschl{\"a}ger \cite{CMSInventiones}, and very recently extended to the ideal sub-Riemannian setting by Barilari and Rizzi \cite{BarilariRizzi-Interpolation}:
 
\begin{theorem}[Interpolation Inequality for ideal (sub-)Riemannian manifolds \cite{CMSInventiones,BarilariRizzi-Interpolation}] \label{thm:interpolation}
Let $(M,\d,\mm)$ denote an ideal sub-Riemannian manifold as above, let $\mu_0,\mu_1 \in \P_c(M)$ with $\mu_0,\mu_1 \ll \mm$, and let $\nu$ be the associated optimal dynamical plan. Denoting by $\rho_t := \frac{d\mu_t}{d\mm}$ the corresponding densities along the $W_2$ geodesic from $\mu_0$ to $\mu_1$, one has for any $t \in (0,1)$:
\begin{equation} \label{eq:interpolation-thm}
\rho^{-\frac{1}{n}}_t(\gamma_t) \geq \beta^{\frac{1}{n}}_{1-t}(\gamma_1,\gamma_0) \rho^{-\frac{1}{n}}_0(\gamma_0) + \beta^{\frac{1}{n}}_{t}(\gamma_0,\gamma_1) \rho^{-\frac{1}{n}}_1(\gamma_1) \;\;\; \text{for $\nu$-a.e. $\gamma \in \geo(M,\d)$}. 
\end{equation}
\end{theorem}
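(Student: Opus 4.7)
The plan is to derive the inequality from a pointwise Jacobian comparison along the $W_2$-optimal geodesic flow, following the optimal-transport blueprint of \cite{CMSInventiones}. Since $(M,\d,\mm)$ is a Monge space, there exist a measurable optimal map $T : M \to M$ and a measurable selection $S : \supp(\mu_0) \to \geo(M,\d)$ with $T_\sharp \mu_0 = \mu_1$, $\nu = S_\sharp \mu_0$, and intermediate maps $T_t := \e_t \circ S$ satisfying $(T_t)_\sharp \mu_0 = \mu_t$. The change-of-variables formula, once justified, yields $\rho_0(x) = \rho_t(T_t(x)) J_t(x)$ for $\mu_0$-a.e.~$x$, where $J_t(x)$ denotes the Jacobian of $T_t$ with respect to $\mm$. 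Substituting this identity into \eqref{eq:interpolation-thm} at time $t$ and at time $1$ and then clearing the common factor $\rho_0^{-1/n}(x)$ reduces the claim to the deterministic pointwise Jacobian estimate
\begin{equation*}
J_t(x)^{1/n} \geq \beta_{1-t}^{1/n}(T(x),x) + \beta_t^{1/n}(x,T(x)) \, J_1(x)^{1/n},
\end{equation*}
holding $\mu_0$-a.e.~along the unique minimizing geodesic from $x$ to $T(x)$.

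The next step is to represent the geodesic flow via the Hamiltonian formalism on $T^*M$. The ideality assumption, combined with the regularity theory of \cite{FigalliRifford-subRiemannian}, ensures that the $W_2$-optimal transport is driven by a locally semiconvex $c$-concave Kantorovich potential $\phi$, that $T_t(x) = \pi \circ \Phi_H^t(x, d\phi(x))$ where $\Phi_H^t$ is the Hamiltonian flow generated by the sub-Riemannian Hamiltonian and $\pi : T^*M \to M$ is the canonical projection, and that $\phi$ is twice differentiable $\mu_0$-a.e. This last point both justifies the change of variables and lets one express $J_t(x)$ through Jacobi matrices along the normal extremal from $x$ to $T(x)$; the absence of abnormal minimizers is essential here, as it guarantees that every optimal geodesic is a projection of a unique Hamiltonian trajectory with initial covector $d\phi(x)$.

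The heart of the argument, and where the real work lies, is a comparison theorem for these sub-Riemannian Jacobi matrices that delivers the concavity-type estimate for $J_t^{1/n}$ encoded in the target inequality. Following \cite{BarilariRizzi-Interpolation}, I would introduce a canonical moving frame along the extremal in the spirit of Agrachev--Zelenko--Li, reduce the matrix Jacobi equation to a block-matrix Riccati equation, and compare its solution to a model Riccati equation whose solution defines the distortion coefficients $\beta$; when the sub-Riemannian structure degenerates to a Riemannian one, this recovers the classical coefficients of \cite{CMSInventiones}. The main obstacle is precisely this comparison step: in the sub-Riemannian setting the curvature operator is not accessible from a Levi-Civita connection, so the relevant invariants (the curvature of the Jacobi curve) must be extracted intrinsically from the symplectic structure on $T^*M$, and the resulting Riccati comparison must be sharp enough to reproduce the $\beta$-weights. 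A secondary technical hurdle is the rigorous justification of the pointwise change of variables when $T_t$ is only approximately differentiable, which I would handle via the Aleksandrov-type second differentiability of the semiconvex $\phi$ together with the area formula. Once both pieces are in place, the pointwise Jacobian estimate pushes forward through $\nu$ to yield \eqref{eq:interpolation-thm} for $\nu$-a.e.~$\gamma$.
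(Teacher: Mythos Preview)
The paper does not supply its own proof of Theorem~\ref{thm:interpolation}; it is quoted as a result from the literature, namely \cite{CMSInventiones} in the Riemannian case and \cite{BarilariRizzi-Interpolation} in the ideal sub-Riemannian case, and is used as a black box throughout. There is therefore nothing in the paper to compare your proposal against.

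That said, your outline is a faithful high-level sketch of the strategy actually carried out in \cite{BarilariRizzi-Interpolation}: reduction to a pointwise Jacobian estimate via the change-of-variables formula, Hamiltonian description of the geodesic flow using a semiconvex Kantorovich potential and the absence of abnormal minimizers, and a matrix Riccati comparison along the Agrachev--Zelenko--Li canonical frame to extract the distortion coefficients $\beta_t$. You correctly identify the two main technical points (the second-order differentiability of $\phi$ needed for the area formula, and the sub-Riemannian Jacobi/Riccati comparison replacing the Levi-Civita-based argument of \cite{CMSInventiones}). Your sketch does not constitute a proof---the Riccati comparison step alone occupies a substantial portion of \cite{BarilariRizzi-Interpolation} and requires a careful analysis of the symplectic invariants of the Jacobi curve---but as a roadmap to the cited argument it is accurate and there is no genuine gap in the plan.
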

\noindent
Here $\beta_t(x,y)$ denotes the measure distortion coefficient from $x \in M$ to $y \in M$, defined as:
\begin{equation} \label{eq:intro-beta}
\beta_t(x,y) := \limsup_{r \rightarrow 0+} \frac{\mm(Z_t(\{x\},B_r(y)))}{\mm(B_r(y))} ,
\end{equation}
where  $Z_t(A,B)$ denotes the set of all $t$-midpoints between points $a \in A$ and $b \in B$ (if $A,B$ are Borel measurable, $Z_t(A,B)$ is analytic and hence $\mm$-measurable). 

On an $N$-dimensional Riemannian manifold whose Ricci curvature is bounded below by $K \in \R$, classical comparison theorems verify that  $\beta^{1/N}_t(x,y) \geq \tau^{(t)}_{K,N}(\d(x,y))$ (with equality on model spaces of constant sectional curvature $\frac{K}{N-1}$),
where:
\[ 
 \tau^{(t)}_{K, N}(\theta) :=t^{\frac 1N} \Big ( \sigma^{(t)}_{K, N-1}(\theta)     \Big )^{1-\frac1N} ~,~ 
 \sigma^{(t)}_{K, N-1}   \big(\theta):=\left\{\begin{array}{llll}
+\infty &\text{if}~~ K \theta^2 \geq \pi^2 (N-1) ,\\
\frac{s_{K/(N-1)}(t \theta)}{s_{K/(N-1)}(\theta)} &\text{otherwise},\\
\end{array}
\right .
\] 
and:
\[ 
s_\kappa(\theta):=\left\{\begin{array}{lll}
(1/\sqrt {\kappa}) \sin (\sqrt \kappa \theta), &\text{if}~~ \kappa>0,\\
\theta, &\text{if}~~\kappa=0,\\
(1/\sqrt {-\kappa}) \sinh (\sqrt {-\kappa} \theta), &\text{if} ~~\kappa<0.
\end{array}
\right.
\] 

The definitions of $\CD(K,N)$ given by Sturm \cite{SturmCD1,SturmCD2} and Lott--Villani \cite{LottVillaniGeneralizedRicci,LottVillani-WeakCurvature} may then be described in analogy to the above (sub-)Riemannian interpolation inequality. While their definitions are more involved (and slightly differ) on general metric-measure spaces $(X,\d,\mm)$ and for general $N \in [1,\infty]$, when $N \in (1,\infty)$ and on 
Monge spaces, the condition simplifies to requiring that for all $\mu_0,\mu_1 \in P_c(X)$ with $\mu_0,\mu_1 \ll \mm$ and for all $t \in (0,1)$:
\[
\rho^{-\frac{1}{N}}_t(\gamma_t) \geq \tau^{(1-t)}_{K,N}(\d(\gamma_0,\gamma_1)) \rho^{-\frac{1}{N}}_0(\gamma_0) + \tau^{(t)}_{K,N}(\d(\gamma_0,\gamma_1))  \rho^{-\frac{1}{N}}_1(\gamma_1) \;\;\; \text{for $\nu$-a.e. $\gamma \in \geo(X,\d)$}. 
\]
Similarly, the (weaker) $\MCP(K,N)$ condition on Monge spaces is defined by requiring that for all $\mu_0,\mu_1 \in P_c(X)$ with $\mu_0 \ll \mm$ and $\supp(\mu_1) \subset \supp \mm$, for all $t \in (0,1)$:
\[
\rho^{-\frac{1}{N}}_t(\gamma_t) \geq \tau^{(1-t)}_{K,N}(\d(\gamma_0,\gamma_1)) \rho^{-\frac{1}{N}}_0(\gamma_0) \;\;\; \text{for $\nu$-a.e. $\gamma \in \geo(X,\d)$}. 
\]
Equivalently, it is enough to check this for $\mu_0 = \frac{1}{\mm(B)} \mm\llcorner_B$ with bounded $B$ ($0 < \mm(B) < \infty$) and for $\mu_1 = \delta_{o}$ with $o \in \supp(\mm)$. In particular, it follows (since $\int_{\supp \mu_t} \rho_t = 1$) that: 
\[
\mm(Z_{1-t}(\{o\} , B)) = \mm(Z_t(B,\{o\})) \geq \mm(\supp \mu_t) \geq \tau^{(1-t)}_{K,N}(\Theta_{o,B})^N \mm(B) ,
\]
where:
\[
 \Theta_{o,B} := \begin{cases} \inf_{x \in B} \d(o,x) & K \geq 0 , \\ \sup_{x \in B} \d(o,x) & K  < 0 , \end{cases}  
\]
and we immediately conclude from (\ref{eq:intro-beta}) that on $\MCP(K,N)$ spaces:
\begin{equation} \label{eq:beta}
\beta_t(x,y) \geq \tau^{(t)}_{K,N}(\d(x,y))^N \;\;\; \forall t \in (0,1) .
\end{equation} 

It is presently not known whether the ideal assumption in Theorem \ref{thm:interpolation} can be removed (say, replaced with being complete and Monge). However, there is one non-ideal (yet still Monge) sub-Riemannian setting in which an analogous result has been established. The following was very recently shown by Balogh--Krist\'aly--Sipos \cite{BKS-CorankOneCarnot}:

\begin{theorem}[Interpolation Inequality for corank $1$ Carnot groups \cite{BKS-CorankOneCarnot}] \label{thm:interpolation2}
Let $M$ denote an $n$-dimensional corank $1$ Carnot group, endowed with its Carnot--Carath\'eodory sub-Riemannian metric $\d$ and left-invariant measure $\mm$. Then for any $\mu_0,\mu_1 \in \P_c(M)$ with $\mu_0,\mu_1 \ll \mm$, (\ref{eq:interpolation-thm}) holds. Furthermore:
\begin{equation} \label{eq:Rizzi}
\beta_{1-t}(\gamma_1,\gamma_0) \geq (1-t)^{n+2} \;\text{ and } \; \beta_{t}(\gamma_0 , \gamma_1) \geq t^{n+2} \;\;\; \text{for $\nu$-a.e. $\gamma \in \geo(M,\d)$}.
\end{equation}
\end{theorem}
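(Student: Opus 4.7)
The plan is to follow the optimal transport framework of McCann and Cordero-Erausquin--McCann--Schmucken\-schl\"ager, in the form extended to the sub-Riemannian setting by Figalli--Rifford. The first step is to invoke a Brenier-type theorem: for $\mu_0, \mu_1 \in \P_c(M)$ with $\mu_0 \ll \mm$, there exists a $c$-concave Kantorovich potential $\varphi$ (with $c = \d^2/2$) such that the unique optimal transport map is $T(x) = \exp_x(\nabla \varphi(x))$, where $\exp$ denotes the sub-Riemannian exponential associated to the normal Hamiltonian. The interpolant $T_t(x) := \exp_x(t \nabla \varphi(x))$ lifts to an optimal dynamical plan $\nu = (T_\bullet)_\sharp \mu_0$ whose time-$t$ marginal is $\mu_t$, with density relation $\rho_0(x) = \rho_t(T_t(x)) \det(DT_t(x))$ holding $\mu_0$-a.e. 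The inequality (2.1) will follow once a pointwise concavity of the form $\det(DT_t(x))^{1/n} \geq \beta_{1-t}^{1/n}(\gamma_1,\gamma_0) + \beta_t^{1/n}(\gamma_0,\gamma_1)\det(DT_1(x))^{1/n}$ (along $\gamma = T_\bullet(x)$) is established.

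The computational heart is an explicit Jacobian analysis that exploits the corank $1$ structure. The Lie algebra splits as an $(n{-}1)$-dimensional horizontal layer plus a one-dimensional center, so normal geodesics are parametrized by a single vertical covector $h \in \R$. In coordinates adapted to this splitting one writes $DT_t$ as a block matrix whose entries depend polynomially on $t$ and trigonometrically on $th$; the resulting determinant factors into a horizontal Jacobi-type determinant times a correction accounting for the bracket-generated vertical motion. Concavity of $\det(DT_t)^{1/n}$ then reduces to a finite-dimensional linear algebra inequality of the same general shape as the classical Cordero-Erausquin--McCann--Schmuckenschl\"ager identity, but corrected by the Carnot bracket; this is the step made rigorous by Balogh--Krist\'aly--Sipos generalizing their earlier Heisenberg analysis. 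Separately, the lower bounds $\beta_{1-t}(\gamma_1,\gamma_0) \geq (1-t)^{n+2}$ and $\beta_t(\gamma_0,\gamma_1) \geq t^{n+2}$ follow directly from definition (2.2) using the anisotropic dilations $\delta_s$: these are Carnot automorphisms that scale $\mm$ by $s^{n+2}$ (the homogeneous dimension) and $\d$ by $s$, and combined with left-invariance they yield $\mm(Z_t(\{x\},B_r(y))) \geq t^{n+2} \mm(B_r(y))$ for $r$ small, whence (2.3) after letting $r \to 0+$.

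The main obstacle is that corank $1$ Carnot groups are \emph{not} ideal: they admit non-trivial abnormal minimizers, and the cut locus is non-empty and intersects every neighborhood of the identity. Consequently Theorem \ref{thm:interpolation} does not apply, and one must verify by hand that the Figalli--Rifford machinery still produces a well-defined, a.e.\ differentiable, injective optimal map $T$. What rescues the argument is that the set of points lying on abnormal minimizers starting from a generic base-point is $\mm$-negligible in corank $1$, so $\varphi$ is twice differentiable $\mu_0$-a.e.\ by a sub-Riemannian Aleksandrov-type theorem, and the Jacobian equation is valid. One must then control the behaviour of $DT_t$ as $t$ approaches $1$ near the cut locus, but this is precisely handled by the explicit corank $1$ formulae. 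Carrying through this reduction from the ideal to the non-ideal corank $1$ setting, together with the fibrewise algebraic Jacobian bound, is the technical core where the Heisenberg proof generalizes.
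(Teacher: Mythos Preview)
The paper does not prove this theorem; it is quoted from Balogh--Krist\'aly--Sipos for the interpolation inequality and from Rizzi for the bound (2.3) on $\beta_t$. Your outline of the interpolation part --- Figalli--Rifford optimal map, $\mu_0$-a.e.\ differentiability despite the non-ideal structure, explicit block-Jacobian computation parametrized by the single vertical covector --- is a fair summary of what the BKS argument does.

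Your argument for (2.3), however, contains a genuine error. For a corank $1$ Carnot group of topological dimension $n$, the anisotropic dilations $\delta_s$ scale $\mm$ by $s^{n+1}$, not $s^{n+2}$: the homogeneous (Hausdorff) dimension is $(n-1)\cdot 1 + 1\cdot 2 = n+1$. The exponent $n+2$ appearing in (2.3) is the \emph{geodesic dimension}, which is strictly larger than the Hausdorff dimension and cannot be read off from dilation scaling. More fundamentally, even with the corrected exponent the dilation argument you propose does not go through: the set $Z_t(\{x\},B_r(y))$ of $t$-intermediate points is not obtained as a dilate of $B_r(y)$, because sub-Riemannian geodesics emanating from a fixed basepoint are not dilation orbits (already in $\H^1$, the time-$t$ point on the geodesic from $e$ to $(z,s)$ is not $\delta_t(z,s)=(tz,t^2 s)$ unless $s=0$). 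What Rizzi's proof actually does is compute the Jacobian determinant of the sub-Riemannian exponential map explicitly as a function of $t$ and the vertical covector, and bound it below by $t^{n+2}$ via a direct trigonometric inequality --- essentially the same fibrewise computation you invoke for the interpolation step, specialized to the case where one marginal degenerates to a Dirac mass.
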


In fact, (\ref{eq:Rizzi}) was previously shown by Rizzi \cite{Rizzi-MCPonCarnot}, thereby deducing that corank $1$ Carnot groups satisfy $\MCP(0,n+2)$. 
Note that a corank $1$ Carnot group $(M,\d,\mm)$ as above is indeed a Monge space, even though it may not be ideal -- see Subsection \ref{subsec:Monge} below.

\subsection{The Quasi Curvature-Dimension Condition} \label{subsec:QCD}

We are now ready to introduce the following definition and establish the subsequent proposition; we continue using the standard notation from the previous subsection.

\begin{definition}[Quasi Curvature-Dimension $\QCD(Q,K,N)$]
A Monge space $(X,\d,\mm)$ is said to satisfy the $\QCD(Q,K,N)$ condition, $Q \geq 1$, $K \in \R$, $N \in (1,\infty)$, if for all 
$\mu_0,\mu_1 \in P_c(X)$ with $\mu_0,\mu_1 \ll \mm$ 
and for all $t \in (0,1)$:
\begin{multline} \label{eq:def-QCD}
\rho^{-\frac{1}{N}}_t(\gamma_t) \geq \frac{1}{Q^{\frac{1}{N}}} \brac{\tau^{(1-t)}_{K,N}(\d(\gamma_0,\gamma_1)) \rho^{-\frac{1}{N}}_0(\gamma_0) + \tau^{(t)}_{K,N}(\d(\gamma_0,\gamma_1))  \rho^{-\frac{1}{N}}_1(\gamma_1)} \\
 \text{for $\nu$-a.e. $\gamma \in \geo(X,\d)$}. 
\end{multline}
\end{definition}

\medskip

\begin{proposition} \label{prop:MCP-QCD}
Let $(M,\D,g)$ denote an $n$-dimensional ideal sub-Riemannian manifold, let $\d$ denote the associated Carnot--Carath\'eodory sub-Riemannian metric, and let $\mm$ be a measure with smooth positive density on $M$. If $(M,\d,\mm)$ satisfies $\MCP(K,N)$ then it also satisfies $\QCD(Q,K,N)$ with $Q = 2^{N-n}$. 
\end{proposition}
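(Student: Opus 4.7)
The plan is to combine the Barilari--Rizzi interpolation inequality (Theorem \ref{thm:interpolation}), which is an estimate with the \emph{topological} exponent $1/n$ involving the raw distortion coefficients $\beta$, with the $\MCP(K,N)$ hypothesis, which via (\ref{eq:beta}) supplies the comparison $\beta_s(x,y) \geq \tau^{(s)}_{K,N}(\d(x,y))^N$. The only non-trivial step is the algebraic passage from the exponent $1/n$ to $1/N$, which is exactly where the slack $Q = 2^{N-n}$ will enter.

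First I fix $\mu_0,\mu_1 \in \P_c(M)$ with $\mu_0,\mu_1 \ll \mm$; since an ideal sub-Riemannian manifold is Monge, there is a unique optimal dynamical plan $\nu$, and Theorem \ref{thm:interpolation} applies to give, for $\nu$-a.e. $\gamma$,
\[
\rho_t^{-1/n}(\gamma_t) \;\geq\; \beta_{1-t}^{1/n}(\gamma_1,\gamma_0)\,\rho_0^{-1/n}(\gamma_0) \;+\; \beta_t^{1/n}(\gamma_0,\gamma_1)\,\rho_1^{-1/n}(\gamma_1).
\]
Next I raise both sides to the power $\alpha := n/N$. The $\MCP(K,N)$ hypothesis on an $n$-dimensional smooth manifold forces $N \geq n$ (since the small-ball asymptotics of $\mm$ are of order $r^n$ and must be compatible with the Bishop--Gromov type estimate coming from $\MCP$), so $\alpha \in (0,1]$. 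Concavity of $x \mapsto x^\alpha$ on $[0,\infty)$ yields the elementary inequality
\[
(B+C)^\alpha \;\geq\; 2^{\alpha-1}\,(B^\alpha+C^\alpha) \qquad \forall B,C \geq 0,
\]
which is sharp at $B = C$. Applying this to the right-hand side of the interpolation inequality gives
\[
\rho_t^{-1/N}(\gamma_t) \;\geq\; 2^{n/N-1}\Bigl[\beta_{1-t}^{1/N}(\gamma_1,\gamma_0)\,\rho_0^{-1/N}(\gamma_0) + \beta_t^{1/N}(\gamma_0,\gamma_1)\,\rho_1^{-1/N}(\gamma_1)\Bigr].
\]

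Finally I invoke (\ref{eq:beta}), which gives $\beta_s(x,y)^{1/N} \geq \tau^{(s)}_{K,N}(\d(x,y))$, and use the symmetry of $\d$ to rewrite $\d(\gamma_1,\gamma_0) = \d(\gamma_0,\gamma_1)$. Substituting,
\[
\rho_t^{-1/N}(\gamma_t) \;\geq\; 2^{n/N-1}\Bigl[\tau^{(1-t)}_{K,N}(\d(\gamma_0,\gamma_1))\,\rho_0^{-1/N}(\gamma_0) + \tau^{(t)}_{K,N}(\d(\gamma_0,\gamma_1))\,\rho_1^{-1/N}(\gamma_1)\Bigr],
\]
and the identity $2^{n/N-1} = 2^{-(N-n)/N} = Q^{-1/N}$ with $Q = 2^{N-n}$ reproduces exactly (\ref{eq:def-QCD}).

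There is no serious obstacle: the proof is essentially two lines once the right elementary inequality is identified, and the only points warranting care are (a) verifying the sharp constant $2^{\alpha-1}$ in the concavity step (which saturates at $B = C$, making it evident that one cannot do better in general) and (b) confirming that $N \geq n$ so that $\alpha \leq 1$ and the inequality runs in the direction we need. I expect the slack factor $Q = 2^{N-n}$ to be sharp precisely in the regime where the two summands $\beta_{1-t}^{1/n}\rho_0^{-1/n}$ and $\beta_t^{1/n}\rho_1^{-1/n}$ are comparable along a positive-measure set of geodesics, so no improvement via this argument is to be expected.
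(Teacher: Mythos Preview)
Your proof is correct and follows essentially the same approach as the paper: both combine the Barilari--Rizzi interpolation inequality with the $\MCP$ lower bound on $\beta$ and the elementary concavity inequality $(a+b)^\alpha \geq 2^{\alpha-1}(a^\alpha+b^\alpha)$ for $\alpha = n/N \in (0,1]$. The only cosmetic difference is the order of operations (you apply the concavity step before substituting $\beta^{1/N} \geq \tau$, whereas the paper substitutes first), which is immaterial.
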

\begin{proof}
By the preceding comments, we know that the $\MCP(K,N)$ condition implies (\ref{eq:beta}). Note that necessarily $N \geq n$, since otherwise this would mean that $\beta_t(x,y) \gg t^{n}$ as $t \rightarrow 0$, which is easily seen to be impossible (see e.g. \cite[Theorem 5]{BarilariRizzi-Interpolation}). Plugging this into the Interpolation Theorem \ref{thm:interpolation}, and applying Jensen's inequality:
\[
a,b \geq 0 ~,~ \alpha \in (0,1] \;\; \Rightarrow \;\; (a+b)^{\alpha} \geq 2^{\alpha-1} (a^{\alpha} + b^{\alpha}) 
\]
with $\alpha = \frac{n}{N} \in (0,1]$, we deduce that with the same notation used there, for all $t\in(0,1)$, for $\nu$-a.e. $\gamma \in \geo(X,\d)$:
\[
\rho^{-\frac{1}{N}}_t(\gamma_t) \geq \frac{1}{2^{\frac{N-n}{N}}} \brac{\tau^{(1-t)}_{K,N}(\d(\gamma_0,\gamma_1)) \rho^{-\frac{1}{N}}_0(\gamma_0) + \tau^{(t)}_{K,N}(\d(\gamma_0,\gamma_1))  \rho^{-\frac{1}{N}}_1(\gamma_1)} .
\]
\end{proof}

It was shown in 
\cite{Rifford-CarnotMCP,Rizzi-MCPonCarnot,BarilariRizzi-MCPonHTypeCarnot,LeeLiZelenko,BarilariRizzi-Interpolation,BarilariRizzi-BE,BGMR-HType} that general ideal Carnot groups, ideal generalized H-type groups and the Heisenberg group in particular, 
 the (ideal) Grushin plane, (ideal) Sasakian and $3$-Sasakian manifolds (under appropriate curvature lower bounds), 
and (ideal) H-type foliations with completely parallel torsion and non-negative horizontal sectional curvature,
when endowed with their canonical sub-Riemannian metric and volume measure, all satisfy $\MCP(0,N)$ for appropriate $N \in (1,\infty)$ (see these references and also \cite{BadreddineRifford} for additional non-ideal classes).
 It follows by Proposition \ref{prop:MCP-QCD} that in addition, they also satisfy $\QCD(Q,0,N)$ for appropriate $Q > 1$. 
We will only record the following particular instance which follows by combining Proposition \ref{prop:MCP-QCD} with \cite[Theorem 3]{BarilariRizzi-MCPonHTypeCarnot}
(cf. \cite[Subsection 7.2]{BarilariRizzi-Interpolation}). 

\begin{corollary} \label{cor:H-group-QCD}
Any ideal generalized H-type group $X$ of dimension $n$ and corank $k$, equipped with its Carnot--Carath\'eodory sub-Riemannian metric $\d$ and left-invariant  measure $\mm$, satisfies $\MCP(0 , n+2 k)$ and hence $\QCD(4^k, 0, n + 2k)$. In particular, this applies to all Heisenberg groups $\H^d$ with $n = 2 d +1$ and $k=1$. 
\end{corollary}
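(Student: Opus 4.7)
The plan is to combine the existing Measure Contraction Property result for generalized H-type groups with the general bridge provided by Proposition \ref{prop:MCP-QCD}. First, I would invoke \cite[Theorem 3]{BarilariRizzi-MCPonHTypeCarnot}, which asserts that on any ideal generalized H-type group of topological dimension $n$ and corank $k$, equipped with its Carnot--Carath\'eodory metric $\d$ and (any) left-invariant Haar measure $\mm$, the $\MCP(0, n+2k)$ condition holds. This produces the first half of the claimed statement directly, and also supplies the precise value of the geodesic dimension $N = n + 2k$ that I need in what follows.

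Second, I would apply Proposition \ref{prop:MCP-QCD} to upgrade $\MCP$ to $\QCD$. The hypotheses of that proposition are satisfied: by assumption $(M,\D,g)$ is ideal, the left-invariant Haar measure on a Lie group has smooth positive density against any volume form, and $K=0$, $N = n+2k$ lie in the admissible range. The proposition then outputs $\QCD(Q, 0, n+2k)$ with
\[
Q \;=\; 2^{N - n} \;=\; 2^{(n+2k)-n} \;=\; 2^{2k} \;=\; 4^k,
\]
which is exactly the slack parameter appearing in the statement.

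Finally, for the specialization to the Heisenberg group $\H^d$, I would simply observe that $\H^d$ is the prototypical ideal generalized H-type group: it is a step-$2$ stratified Lie group with a $2d$-dimensional horizontal layer and a $1$-dimensional center, so $n = 2d+1$ and $k=1$, giving $N = 2d+3$ and $Q = 4^1 = 4$ regardless of $d$.

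There is really no hard step here once Proposition \ref{prop:MCP-QCD} and \cite[Theorem 3]{BarilariRizzi-MCPonHTypeCarnot} are in hand; the entire content of the corollary is the numerical bookkeeping $2^{N-n} = 4^k$ together with the observation that generalized H-type groups sit inside the framework (ideal sub-Riemannian, smooth positive measure) where that bridge is applicable. The only mild point worth double-checking is that the notion of corank $k$ used in \cite{BarilariRizzi-MCPonHTypeCarnot} agrees with the one used here — i.e.\ that the integer appearing in the geodesic-dimension formula $N = n + 2k$ there matches the $k$ in the exponent $4^k$ in the QCD slack — but this is a direct matter of definitions rather than a genuine obstacle.
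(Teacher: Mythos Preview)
Your proposal is correct and follows exactly the paper's own route: the corollary is stated there as an immediate consequence of combining Proposition \ref{prop:MCP-QCD} with \cite[Theorem 3]{BarilariRizzi-MCPonHTypeCarnot}, and the arithmetic $Q = 2^{N-n} = 2^{2k} = 4^k$ is all that remains.
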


In the non-ideal setting, by invoking Theorem \ref{thm:interpolation2} instead of Theorem \ref{thm:interpolation} above, a completely identical argument for corank $1$ Carnot groups yields:

\begin{corollary} \label{cor:Step1-Carnot-QCD}
A corank $1$ Carnot group $X$ of dimension $n$, equipped with its Carnot--Carath\'eodory sub-Riemannian metric $\d$ and left-invariant  measure $\mm$, satisfies $\MCP(0 , n+2)$ and hence $\QCD(4, 0, n + 2)$.
\end{corollary}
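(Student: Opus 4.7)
The plan is to mimic the proof of Proposition~\ref{prop:MCP-QCD} essentially verbatim, with Theorem~\ref{thm:interpolation2} taking over the role played by Theorem~\ref{thm:interpolation}. The point is that Theorem~\ref{thm:interpolation2} furnishes, in the (not necessarily ideal) corank $1$ Carnot setting, exactly the two ingredients needed: the $n$-th root interpolation inequality (\ref{eq:interpolation-thm}) and the explicit distortion lower bound (\ref{eq:Rizzi}). The Monge property, also required by the $\QCD$ definition, is supplied by the remark preceding Subsection~\ref{subsec:QCD}.

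For the $\MCP(0, n+2)$ assertion, I would simply appeal to Rizzi's result \cite{Rizzi-MCPonCarnot} cited immediately after Theorem~\ref{thm:interpolation2}. Alternatively, one can derive it on the spot by dropping the $\rho_1^{-1/n}$ term from (\ref{eq:interpolation-thm}), raising to the monotone power $n/(n+2)$, and invoking $\beta_{1-t} \geq (1-t)^{n+2}$ from (\ref{eq:Rizzi}); since $\tau_{0, n+2}^{(1-t)}(\theta) = 1 - t$ for all $\theta$, the resulting inequality
\[
\rho_t^{-\frac{1}{n+2}}(\gamma_t) \geq \tau_{0, n+2}^{(1-t)}(\d(\gamma_0, \gamma_1))\, \rho_0^{-\frac{1}{n+2}}(\gamma_0) \quad \text{$\nu$-a.e.}
\]
is precisely the density form of $\MCP(0, n+2)$ on the Monge space $(X, \d, \mm)$.

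For $\QCD(4, 0, n+2)$, I would next apply the Jensen-type bound $(a+b)^\alpha \geq 2^{\alpha - 1}(a^\alpha + b^\alpha)$ with $\alpha = n/N = n/(n+2) \in (0, 1]$ to the right-hand side of (\ref{eq:interpolation-thm}), after raising (\ref{eq:interpolation-thm}) to the monotone power $n/N$. Replacing the resulting $\beta_{1-t}^{1/N}$ and $\beta_t^{1/N}$ by $1 - t = \tau_{0, N}^{(1-t)}(\d(\gamma_0, \gamma_1))$ and $t = \tau_{0, N}^{(t)}(\d(\gamma_0, \gamma_1))$ via (\ref{eq:Rizzi}) yields, for $\nu$-a.e.\ $\gamma$,
\[
\rho_t^{-1/N}(\gamma_t) \geq 2^{\frac{n - N}{N}}\bigl(\tau_{0, N}^{(1-t)}(\d(\gamma_0, \gamma_1))\, \rho_0^{-1/N}(\gamma_0) + \tau_{0, N}^{(t)}(\d(\gamma_0, \gamma_1))\, \rho_1^{-1/N}(\gamma_1)\bigr),
\]
and since $2^{(n-N)/N} = Q^{-1/N}$ with $Q = 2^{N-n} = 4$, this is exactly the $\QCD(4, 0, n+2)$ inequality (\ref{eq:def-QCD}).

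The argument presents no genuine obstacle beyond recognizing that the ideal hypothesis in Proposition~\ref{prop:MCP-QCD} was used solely to invoke Theorem~\ref{thm:interpolation}, a role now subsumed by Theorem~\ref{thm:interpolation2}. All remaining steps---the power manipulation, Jensen's inequality, the identification $\tau_{0, N}^{(t)}(\theta) = t$, and the final arithmetic $Q = 2^{N - n} = 2^2 = 4$---are identical to, and in fact simpler than, those in the earlier proof, since here $N - n = 2$ is fixed and $K = 0$.
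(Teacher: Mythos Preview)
Your proposal is correct and follows essentially the same approach as the paper, which explicitly states that the corollary follows by ``a completely identical argument'' to Proposition~\ref{prop:MCP-QCD} after invoking Theorem~\ref{thm:interpolation2} in place of Theorem~\ref{thm:interpolation}. Your alternative derivation of $\MCP(0,n+2)$ from the interpolation inequality is a nice touch, though the paper simply cites Rizzi's result directly.
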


See Subsection \ref{subsec:BM} for a discussion of the optimality of the constant $Q = 2^{N-n}$ in Proposition \ref{prop:MCP-QCD} (and in particular the constant $4^k$ in Corollary \ref{cor:H-group-QCD}) as well as the constant $Q=4$ in Corollary \ref{cor:Step1-Carnot-QCD}. 

\subsection{One-dimensional $\QCD$ spaces}

Up until now we have not really done anything of substance, besides applying Jensen's inequality and introducing the $\QCD$ definition, so we must now justify its usefulness. The latter stems
from the following one-dimensional observation. We denote by $\mathcal L^1$ the Lebesgue measure on $\R$. 

\begin{proposition} Let $h$ be a density on $\R$ which is continuous on its support. Then $(\R,\abs{\cdot},h \L^1)$ is a $\QCD(Q,K,N)$ space iff there exists a density $f$ on $\R$, continuous on its support, with:
\begin{equation} \label{eq:intro-stable}
h \leq f \leq Q h ,
\end{equation}
so that $(\R,\abs{\cdot},f \L^1)$ is a $\CD(K,N)$ space. 
\end{proposition}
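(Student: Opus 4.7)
My plan is to reduce both conditions to equivalent pointwise inequalities on the density along the real line, and then the forward direction becomes a classical envelope construction while the backward direction is a direct density comparison.

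The first step is to establish a one-dimensional characterization of $\QCD(Q,K,N)$: on the Monge space $(\R,|\cdot|,h\L^1)$, this condition is equivalent to $\supp(h)$ being an interval $I$, $h$ continuous on $I$, and
\[
h^{\frac{1}{N-1}}(x_t) \geq Q^{-\frac{1}{N-1}} \Big[\sigma^{(1-t)}_{K,N-1}(|x_1-x_0|)\, h^{\frac{1}{N-1}}(x_0) + \sigma^{(t)}_{K,N-1}(|x_1-x_0|)\, h^{\frac{1}{N-1}}(x_1) \Big]
\]
holding for all $x_0,x_1 \in I$ and $t \in [0,1]$, with $x_t := (1-t)x_0 + tx_1$; the case $Q=1$ recovers the classical 1D $\CD(K,N)$ density characterization. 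The derivation is standard: test the $\QCD$ definition on pairs $\mu_0,\mu_1$ supported in small intervals near $x_0,x_1$, use that the $W_2$-optimal map in 1D is the unique monotone rearrangement (essentially affine in the localized limit), write out the change-of-variables formula for $\rho_t$, and optimize over the free slope parameter. This optimization, which amounts to finding a critical point, converts the exponent $1/N$ intrinsic to the $\rho_t^{-1/N}$-level definition into the exponent $1/(N-1)$ intrinsic to the pointwise density inequality.

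For the backward direction, suppose $f$ exists as in the conclusion. Given any $\mu_0,\mu_1 \in \P_c(\R)$ absolutely continuous with respect to $h\L^1$, the inclusion $h \leq f \leq Qh$ gives $\mu_0,\mu_1 \ll f\L^1$ as well, and the $W_2$-geodesic and optimal dynamical plan $\nu$ depend only on the metric. Letting $g_t$ denote the Lebesgue density of $\mu_t$, the densities $\rho_t^h := g_t/h$ and $\rho_t^f := g_t/f$ satisfy $(\rho_t^h)^{-1/N} = (\rho_t^f)^{-1/N}(h/f)^{1/N}$ with $h/f \in [Q^{-1},1]$. Plugging these relations pointwise at $\gamma_t,\gamma_0,\gamma_1$ into the $\CD(K,N)$ inequality for $f$ immediately yields the $\QCD(Q,K,N)$ inequality for $h$, the only loss being a single factor of $Q^{-1/N}$ on the right-hand side.

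For the forward direction, I would construct $f$ via a $\sigma$-concave envelope. Set $\phi := h^{1/(N-1)}$ on $I = \supp(h)$ and let $\psi$ be the pointwise infimum of all continuous functions $g \geq \phi$ on $I$ satisfying the $\sigma^{(\cdot)}_{K,N-1}$-concavity inequality (the $Q=1$ version of the displayed inequality). Standard soft analysis shows $\psi$ is itself continuous and $\sigma$-concave on $I$, equals $\phi$ on a closed ``contact set" $C$, and on each connected component $(a,b)$ of $I \setminus C$ coincides with the $\sigma$-linear interpolation between $\phi(a)$ and $\phi(b)$ (otherwise one could decrease $\psi$ and contradict minimality). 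Define $f := \psi^{N-1}$ on $I$ and $f:=0$ elsewhere: then $f \geq h$, $f$ is continuous on its support, and by the 1D $\CD$ characterization $(\R,|\cdot|,f\L^1)$ is $\CD(K,N)$. The remaining bound $f \leq Qh$ reads $\psi \leq Q^{1/(N-1)}\phi$; at contact points it is trivial since $\psi = \phi$ and $Q \geq 1$, while at a non-contact point $x \in (a,b) \subset I \setminus C$, the $\sigma$-linearity of $\psi$ on $[a,b]$ combined with the 1D $\QCD$ inequality applied to the triple $(a,x,b)$ delivers exactly $\phi(x) \geq Q^{-1/(N-1)}\psi(x)$. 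The main obstacle is the reformulation in Step 1 --- the bookkeeping that converts the $1/N$ exponent appearing in the OT-level definition of $\QCD$ into the $1/(N-1)$ exponent appearing in the pointwise density inequality; once this is in hand, both the envelope argument and the density-comparison become conceptually transparent, modulo minor technicalities when $K > 0$ and $I$ exceeds the critical length (where $\sigma = +\infty$ makes the inequality vacuous).
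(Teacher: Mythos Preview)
Your overall strategy matches the paper's: both reduce to the pointwise density characterization (the paper's Corollary~\ref{cor:QCD-density} via Lemma~\ref{lem:1D-densities}), handle the backward direction by direct comparison using $h \leq f \leq Qh$, and for the forward direction build $f$ as a $\CD(K,N)$ upper envelope of $h$. The paper likewise defines both an upper object $\bar f$ (infimum of dominating $\CD(K,N)$ model densities) and a lower object $\ubar f$ (supremum of $\sigma$-linear combinations of values of $h^{1/(N-1)}$, which automatically satisfies $\ubar f \leq Qh$ by the $\QCD$ inequality), and shows $\bar f = \ubar f$.

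There is, however, a genuine gap in your argument for $f \leq Qh$. You assert that each connected component of $I \setminus C$ is a bounded interval $(a,b)$ with $a,b \in C$, so that $\psi$ is the $\sigma$-linear interpolation between $\phi(a)$ and $\phi(b)$ and the $\QCD$ inequality on the triple $(a,x,b)$ yields $\phi(x) \geq Q^{-1/(N-1)}\psi(x)$. This fails when $I=\supp h$ is unbounded, which is possible whenever $K\leq 0$. For a concrete counterexample take $K=0$, $N=2$, $h(x)=2-e^{-x^2}$ on $I=\R$: since $1\leq h\leq 2$, the constant $f\equiv 2$ witnesses that $h$ is a $\QCD(2,0,2)$ density; the concave envelope is $\psi\equiv 2$; and the contact set $C$ is empty, so there are no endpoints $a,b\in C$ to anchor your interpolation. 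The paper avoids this by a pointwise argument that does not rely on contact-set structure: for each $x\in\intt I$ it produces a $\CD(K,N)$ \emph{model} density $f_m$ (a solution of $(f_m^{1/(N-1)})''+\tfrac{K}{N-1}f_m^{1/(N-1)}=0$) with $f_m(x)=\ubar f(x)$ and $f_m\geq h$ on all of $I$, via an intermediate-value argument on the slope parameter $f_m'(x)$. If no such $f_m$ existed, every slope would force $f_m<h$ somewhere to the left or somewhere to the right of $x$ but never both (else the $\sigma$-linear identity for $f_m$ would contradict $f_m(x)=\ubar f(x)$); these being complementary open conditions in the slope, each realized as the slope tends to $\pm\infty$, connectedness gives a contradiction. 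This works uniformly for bounded and unbounded $I$.
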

This is proved in Corollary \ref{cor:QCD-density} and Proposition \ref{prop:QCD-CD}, by taking $f$ to be the ``$\CD(K,N)$ upper envelope" of $h$.
It is not too hard to realize that (\ref{eq:intro-stable}) is a genuinely one-dimensional property, and that an analogous necessary condition need not hold in higher dimensional settings without some dimension-dependence in the estimate; indeed, by Carath\'eodory's theorem, the convex hull in $\R^n$ can be realized by $n+1$ points but no less in general, and so any penalty incurred for ``quasi-concavity" between 2 points will be amplified as the dimension increases. Consequently, we need an apparatus for reducing the study of $\QCD$ spaces to the one-dimensional case. 

\subsection{General Localization Theorem}

We achieve this by extending the localization method -- a paradigm which reduces the task of establishing various analytic and geometric inequalities on an $n$-dimensional space to the one-dimensional setting -- to spaces satisfying general interpolation inequalities which include the $\QCD$ case. 

In the Euclidean setting, the localization method has its roots in the work of Payne and Weinberger \cite{PayneWeinberger} on the spectral-gap for convex domains in Euclidean space, and has been further developed by Gromov and V.~Milman \cite{Gromov-Milman} and Kannan, Lov\'asz and Simonovits \cite{KLS}. In a ground-breaking work \cite{KlartagLocalizationOnManifolds}, B.~Klartag reinterpreted the localization paradigm as a measure disintegration adapted to $L^1$-Optimal-Transport, and extended it to weighted Riemannian manifolds satisfying $\CD(K,N)$. In a subsequent breakthrough, Cavalletti and Mondino \cite{CavallettiMondino-Localization} (cf. \cite{CavallettiMondino-Laplacian}) have succeeded to extend this technique to Monge spaces satisfying $\CD(K,N)$ with $N < \infty$. The localization method is also available on Monge spaces satisfying $\MCP(K,N)$ with $N<\infty$ \cite{Cavalletti-MongeForRCD,CavallettiMondino-Laplacian}, starting from the work of Bianchini and Cavalletti in the non-branching setting \cite{BianchiniCavalletti-MongeProblem}. 

In Theorem \ref{thm:gen-loc}, we extend the localization method to Monge spaces for completely general interpolation coefficients. With our usual notation, it applies assuming that the Monge space is $\MCP(K',N')$ for some $K' \in \R$ and $N' \in (1,\infty)$, and that for a fixed $N \in (1,\infty)$ and coefficients $(0,1) \times \R_+ \ni (t,\theta) \rightarrow \sigma^{(t)}_i(\theta) \in [0,+\infty]$, $i=0,1$, which are continuous in each variable, the following interpolation property holds for all $t \in (0,1)$:
\[
\rho_t^{-\frac{1}{N}}(\gamma_t) \geq  \sigma^{(1-t)}_0(\d(\gamma_0,\gamma_1)) \rho_0^{-\frac{1}{N}}(\gamma_0) + \sigma^{(t)}_1(\d(\gamma_0,\gamma_1)) \rho_1^{-\frac{1}{N}}(\gamma_1) \;\;
\text{for $\nu$-a.e. $\gamma \in \geo(X,\d)$.}
\]
The proof is based on the proof of the localization theorem for $\CD(K,N)$ spaces by Cavalletti and Mondino \cite[Theorem 5.1]{CavallettiMondino-Localization}, with one crucial difference -- in \cite{CavallettiMondino-Localization}, the fact that the $\CD(K,N)$ condition on a one-dimensional geodesic enjoys the local-to-global property was extensively used, and so it was enough to establish it locally on geodesics participating in the localization. In contrast, the above condition employing general functions $\sigma_0,\sigma_1$ will typically \textbf{not} satisfy the local-to-global property even on a one-dimensional space (this is the case for $\QCD(Q,K,N)$ when $Q > 1$ and even $\MCP(K,N)$), and so we are required to directly obtain the global property on the geodesics. 

\subsection{Functional Inequalities on $\QCD$ spaces}

Combining all of the above ingredients, we are able to conclude that any property which is amenable to localization \emph{and} stable under perturbations as in (\ref{eq:intro-stable}), will be shared by $\QCD(Q,K,N)$ spaces together with their $\CD(K,N)$ counterparts, up to constants depending solely on $Q$. Fortunately, this includes a multitude of fundamental analytic and geometric properties; we will only demonstrate this for the $L^p$-Poincar\'e and log-Sobolev inequalities. We remark that the Poincar\'e inequality is sometimes also referred to as the ``Poincar\'e--Wirtinger" or ``Poincar\'e--Neumann" inequality in the literature. 

Given a metric-measure space $(X,\d,\mm)$, let $|{\nabla }_{X} f|: X \mapsto \R$ denote the local Lipschitz constant  of $f$, defined as
 \[
  |{\nabla }_{X} f|(x):=\limsup_{y \rightarrow x} \frac{|f(y)-f(x)|}{\d(y, x)} 
\]
(and $0$ if $x$ is an isolated point). Throughout this work, by ``locally Lipschitz function" we mean a locally $\d$-Lipschitz function. 
Assume that $\supp(\mm)$ is geodesically convex (any two points in $\supp(\mm)$ can be connected by a geodesic in $\supp(\mm)$). 
Given a subset $\Omega \subset \supp(\mm)$, recall that $\conv(\Omega)$ denotes its geodesic hull. 

\begin{itemize}
\item We denote by $\lambda_p[(X,\d,\mm),\Omega]$ the best constant $\lambda_p$ so that 
for any (locally) Lipschitz function $f : (X,\d) \rightarrow \R$, the following $L^p$-Poincar\'e inequality holds:
\[
\int_{\Omega} |f|^{p-2} f \mm = 0  \;\; \Rightarrow \;\; \lambda_{p} \int_{\Omega} |f|^p \mm \leq \int_{\conv(\Omega)} |\nabla_{X} f|^p \mm .
\]
\item We denote by $\lambda_{LS}[(X,\d,\mm),\Omega]$ the best constant $\lambda_{LS}$ so that 
for any (locally) Lipschitz function $f : (X,\d) \rightarrow \R$, the following log-Sobolev inequality holds:
\[
\int_{\Omega} (f^2 - 1) \mm = 0  \;\; \Rightarrow \;\; \frac{\lambda_{LS}}{2} \int_{\Omega} f^2 \log(f^2) \mm \leq \int_{\conv(\Omega)} |\nabla_{X} f|^2 \mm .
\]
\end{itemize}

The idea to use $\conv(\Omega)$ instead of $\Omega$ on the energy side of the functional inequalities above originated in our previous work with B.~Han \cite{HanEMilman-MCP-Poincare}, and enables us to get a meaningful inequality without imposing various extra conditions on $\Omega$. Indeed, if we were to replace $\conv(\Omega)$ by $\Omega$, the best constants above would clearly be $0$ for (say) disconnected $\Omega$, or even if $\Omega$ just contains arbitrarily small necks. One way to resolve this is to require that $\Omega$ be geodesically convex, but as already mentioned in the Introduction, this is too strong of an imposition on many spaces, especially in the sub-Riemannian setting, where geodesically convex subsets are known to be scarce.

Given a family $\mathscr{X}$ of metric measure spaces $(X,\d,\mm)$ so that $\supp(\mm)$ is geodesically convex and $D \in (0,\infty)$, we denote by $\Xi_{\mathscr{X},D}$ the collection of all $(\mathcal{X},\Omega)$ where $\mathcal{X} = (X,\d,\mm) \in \mathscr{X}$ and $\Omega$ is a closed subset of $\supp(\mm) \subset X$ with $\diam(\Omega) \leq D$. For any of our constants $\lambda_* \in \{ \lambda_p , \lambda_{LS} \}$, we set:
\begin{align*}
\lambda_*[\mathscr{X},D] & := \inf \{ \lambda_*[\mathcal{X},\Omega] \; ; (\mathcal{X},\Omega) \in \Xi_{\mathscr{X},D} \} , \\
\bar \lambda_*[\mathscr{X},D] & :=  \inf \{ \lambda_*[\mathcal{X},\supp(\mm_{\mathcal{X}})] \; ;  (\mathcal{X},\supp(\mm_{\mathcal{X}})) \in \Xi_{\mathscr{X},D} \} .
\end{align*}
Clearly $\lambda_*[\mathscr{X},D] \leq \bar \lambda_*[\mathscr{X},D]$. Note that the $\bar \lambda_*$ definition corresponds to simply integrating over $X$ (or equivalently $\supp(\mm)$) in both sides of the above inequalities; thus $\bar \lambda_*[\mathscr{X},D]$ is the best constant in these standard versions for all members of $\mathscr{X}$ so that $\diam(\supp(\mm)) \leq D$, whereas the $\lambda_*[\mathscr{X},D]$ variant gives us the added flexibility of considering arbitrary closed subsets of $\supp(\mm)$ of diameter at most $D$. 
In the one-dimensional setting, we additionally abbreviate for a density $h$ on $\R$ and a closed interval $I \subset \R$:
\[
\blambda_*[h,I] := \lambda_*[(\R,|\cdot|,h \L^1), I] .
\]

\begin{definition}[$\QCD_{reg}(Q,K,N)$, $\CD_{reg}(K,N)$ and $\CD_1(K,N)$]
Given $K \in \R$, $N \in (1,\infty)$ and $Q \geq 1$, we denote by $\QCD_{reg}(Q,K,N)$ the family of all Monge spaces $(X,\d,\mm)$ satisfying $\QCD(Q,K,N)$ and $\MCP(K',N')$ for some $K' \in \R$ and $N' \in (1,\infty)$; note that $\QCD_{reg}(1,K,N)$ coincides with the family $\CD_{reg}(K,N)$ of Monge spaces satisfying $\CD(K,N)$ (and hence $\MCP(K,N)$). We also denote by $\CD_1(K,N)$ the family of one-dimensional spaces $(\R,\abs{\cdot},h \L^1)$ satisfying $\CD(K,N)$. Note that:
\[
\CD_{1}(K,N) \subset \CD_{reg}(K,N)  \subset\QCD_{reg}(Q,K,N) .
\]
\end{definition}
It is known that $\supp(\mm)$ is geodesically convex on $\MCP(K,N)$ spaces, and hence for all of the above spaces. In the one-dimensional setting, it is not too hard to show that $\bar \lambda_*[\CD_1(K,N),D] = \lambda_*[\CD_1(K,N),D]$ (see Corollary \ref{cor:lambda-1D}). We can now state: 

\medskip

\begin{theorem} \label{thm:intro-main}
For all $K \in \R$, $N \in (1,\infty)$, $Q \geq 1$ and $D \in (0,\infty)$:
\begin{itemize}
\item $\bar \lambda_p[\CD_{1}(K,N),D] \geq \lambda_p[\QCD_{reg}(Q,K,N),D] \geq \frac{1}{Q} \bar \lambda_p[\CD_{1}(K,N),D]$ for all $p \in (1,\infty)$. 
\item $\bar \lambda_{LS}[\CD_{1}(K,N),D] \geq \lambda_{LS}[\QCD_{reg}(Q,K,N),D] \geq \frac{1}{Q} \bar \lambda_{LS}[\CD_{1}(K,N),D]$. 
\end{itemize}
\end{theorem}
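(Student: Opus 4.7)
The plan is to handle both functional inequalities in parallel. The \emph{upper bound} $\lambda_*[\QCD_{reg}(Q,K,N),D] \leq \bar\lambda_*[\CD_1(K,N),D]$ is straightforward: from $\CD_1(K,N) \subset \CD_{reg}(K,N) \subset \QCD_{reg}(Q,K,N)$, any one-dimensional $\CD(K,N)$ space is admissible for the infimum defining $\lambda_*[\QCD_{reg}(Q,K,N),D]$. Moreover, since $\supp(\mm)$ is an interval in 1D, one has $\conv(\supp(\mm)) = \supp(\mm)$ and therefore $\lambda_*[\mathcal{X},\supp(\mm)] = \bar\lambda_*[\mathcal{X},\supp(\mm)]$ for $\mathcal{X} \in \CD_1(K,N)$, which closes the upper bound.

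For the \emph{lower bound}, fix $(X,\d,\mm) \in \QCD_{reg}(Q,K,N)$ and a closed $\Omega \subset \supp(\mm)$ with $\diam(\Omega) \leq D$, and take a locally Lipschitz $f$ satisfying the appropriate balance condition ($\int_\Omega |f|^{p-2} f \, d\mm = 0$ or $\int_\Omega (f^2-1) \, d\mm = 0$). Setting $u = |f|^{p-2} f$ or $u = f^2 - 1$, I would apply the general localization Theorem \ref{thm:gen-loc} to $\mm\llcorner_\Omega$ with this $u$: the hypotheses hold because $\QCD_{reg}$ spaces are $\MCP(K',N')$ for some $K',N'$ and satisfy the $\QCD(Q,K,N)$ interpolation inequality. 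This yields a disintegration of $\mm\llcorner_{\mathcal{T}}$ into one-dimensional measures $\mm_q = h_q \Haus^1\llcorner_{X_q}$, each supported on a transport ray $X_q$ of length $\leq D$, so that (identifying $X_q$ with an interval $I_q \subset \R$ via arc-length) the space $(\R, |\cdot|, h_q \L^1)$ is $\QCD(Q,K,N)$ and $\int_{I_q} u \, h_q \, d\L^1 = 0$ for $\q$-a.e.\ $q$; the residual set $X \setminus \mathcal{T}$ is negligible since $f$ is essentially constant there. Now Corollary \ref{cor:QCD-density} and Proposition \ref{prop:QCD-CD} provide, for each needle, a $\CD(K,N)$ density $f_q$ on $I_q$ satisfying the pointwise bound $h_q \leq f_q \leq Q h_q$. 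Applying the 1D $\CD$ Poincar\'e inequality with density $f_q$ and using the sandwich,
\[
\bar\lambda_p[\CD_1(K,N),D] \inf_{c \in \R} \int_{I_q} |f-c|^p h_q \, d\L^1 \leq \bar\lambda_p \inf_c \int_{I_q} |f-c|^p f_q \leq \int_{I_q} |f'|^p f_q \leq Q \int_{I_q} |f'|^p h_q \, d\L^1,
\]
with the leftmost infimum equal to $\int_{I_q} |f|^p h_q \, d\L^1$ because the balance condition is precisely the Euler--Lagrange equation for the minimizer $c=0$. Integrating against $\q$, using $X_q \subset \conv(\Omega)$ together with $|f'| \leq |\nabla_X f|$ along geodesics, produces the $L^p$-Poincar\'e inequality on $(X,\d,\mm)$ with constant $\frac{1}{Q}\bar\lambda_p[\CD_1(K,N),D]$. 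The log-Sobolev case is identical, using instead the variational representation $\Ent_\mu(g) = \inf_{c>0} \int [g \log(g/c) - (g-c)] \, d\mu$ whose integrand is pointwise non-negative, so the density sandwich passes into the entropy with a single factor of $Q$.

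The main obstacle is paying a loss of exactly $Q$ (rather than something worse depending on the ambient dimension). Two features make this possible: the 1D characterization (\ref{eq:intro-stable}) is a genuinely \emph{pointwise} two-sided bound between a $\QCD$ density and a $\CD$ density (this is what breaks down in higher dimensions via a Carath\'eodory-type defect), and both the $L^p$-variance $\inf_c \int |f-c|^p d\mu$ and the entropy $\Ent_\mu(g)$ admit variational representations as infima of pointwise non-negative quantities, translating the one-sided density bound $f_q \leq Q h_q$ into the functional loss factor exactly once. The role of the geodesic hull $\conv(\Omega)$ on the energy side is also crucial: the transport rays produced by the localization lie inside $\conv(\Omega)$ but typically not inside $\Omega$ itself, so without this enlargement the per-needle estimate could not be summed back to a global inequality.
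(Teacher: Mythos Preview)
Your overall strategy matches the paper's: the upper bound via the inclusion $\CD_1(K,N) \subset \QCD_{reg}(Q,K,N)$, and the lower bound via localization with $g = Z_*(f) 1_\Omega$ followed by the one-dimensional sandwich $h_q \leq f_q \leq Q h_q$ from Proposition~\ref{prop:QCD-CD}. The variational reformulations of the $L^p$-variance and of the entropy as infima of pointwise non-negative integrands are exactly the device the paper uses to pass the density bound through with a single factor of $Q$.

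There is, however, a genuine gap: your claims that each transport ray $X_q$ has length $\leq D$ and that $X_q \subset \conv(\Omega)$ are both false in general. The rays are the maximal geodesics on which the Kantorovich potential for the $L^1$-transport between $g_+ \mm$ and $g_- \mm$ is affine with slope $1$; although $\supp(g \mm) \subset \Omega$ has diameter $\leq D$, nothing prevents these rays from extending well beyond $\Omega$. Consequently, applying $\bar\lambda_*[\CD_1(K,N),D]$ to the full needle $(I_q, h_q)$ is illegitimate, and the energy integral over $X_q$ need not be bounded by the integral over $\conv(\Omega)$.

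The paper's remedy is to restrict each needle to $L_q := \conv_{X_q}(X_q \cap \supp(g \mm))$, which does satisfy $\diam(L_q) \leq D$ and $L_q \subset X_q \cap \conv(\supp(g\mm)) \subset \conv(\Omega)$. One must then cope with the fact that the balance $\int g \, \mm_q = 0$ only yields $\int_{L_q \cap \Omega} Z_*(f) \, \mm_q = 0$ (not over all of $L_q$); this is handled via a monotonicity observation, namely $\lambda_*[(L_q,\d,\mm_q\llcorner_{L_q}), L_q \cap \Omega, L_q] \geq \lambda_*[(L_q,\d,\mm_q\llcorner_{L_q}), L_q, L_q]$, together with the separate verification that $Z_*(f) = 0$ $\mm_q$-a.e.\ on $(X_q \cap \Omega) \setminus L_q$ so that the mass-side integral may be extended back to $X_q \cap \Omega$. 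With this correction your argument goes through and coincides with the paper's.
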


The case $Q=1$ with the $\lambda_*$ middle term above replaced by (the a-priori larger) $\bar \lambda_*$  is not new, and was obtained by Cavalletti--Mondino \cite{CavallettiMondino-LocalizationApps} as an immediate corollary of their localization theorem for $\CD_{reg}(K,N)$ spaces; the possibility to extend this from $\bar \lambda_*$ to $\lambda_*$ as above was anticipated in our previous work \cite{HanEMilman-MCP-Poincare}, and is in itself new.
The case $Q > 1$ is the main novelty of Theorem \ref{thm:intro-main}, and constitutes the main result of this work. 

\smallskip
The constants $\bar \lambda_p[\CD_{1}(K,N),D]$ have been well-studied in the literature and completely determined:
\begin{equation} \label{eq:lambda_p}
\bar \lambda_p[\CD_{1}(K,N),D] = \blambda_p[ c^{N-1}_{K/(N-1)}(t) , [-D/2,D/2] ] ,
\end{equation}
where:
\[
c_\kappa(t):=\left\{\begin{array}{lll}
\cos (\sqrt \kappa t) 1_{[-\frac{\pi}{2} , \frac{\pi}{2}]}(\sqrt{\kappa} t) &\text{if}~~ \kappa>0,\\
1 &\text{if}~~\kappa=0,\\
 \cosh (\sqrt {-\kappa} t) &\text{if} ~~\kappa<0.
\end{array}
\right.
\]
This follows from the results of Bakry--Qian \cite{BakryQianSharpSpectralGapEstimatesUnderCDD} when $p=2$ (see also \cite{AndrewsClutterbuck-SharpSG,CalderonThesis}), 
and Matei \cite{Matei-pSG},
 Valtorta \cite{Valtorta-pSG},
Esposito--Nitsch--Trombetti \cite{ENT-pSGForLogConcaveDensity} 
 and Naber--Valtorta \cite{NaberValtorta-pSG} 
  for general $p \in (1,\infty)$ (see also \cite[Chapter 6]{CalderonThesis} and \cite{Xia-Finsler-pSG}); 
  in fact these authors directly showed in the weighted Riemannian setting that $\bar \lambda_p[\CD_{reg}(K,N),D] = \bar\lambda_p[\CD_{1}(K,N),D]$ prior to Klartag's extension of the localization method to the Riemannian setting. In particular (see \cite{Valtorta-pSG} and \cite{BakryQianSharpSpectralGapEstimatesUnderCDD}):
\begin{equation} \label{eq:LiYau}
\bar \lambda_p[\CD_{1}(0,N),D] = \blambda_p[1,[-D/2,D/2]] = (p-1) \brac{\frac{2 \pi}{p \sin(\pi/p) D}}^p  ~; 
\end{equation}
 \begin{equation} \label{eq:Lichnerowicz}
 K > 0 \;, \; D \geq \pi \sqrt{(N-1) / K} \;\; \Rightarrow \;\; \bar \lambda_2[\CD_{1}(K,N),D] = \frac{N}{N-1} K   .
 \end{equation}
 
Note that even in the simplest case of $p=2$ and $K \geq 0$, Theorem \ref{thm:intro-main} constitutes a \emph{sharp} and \emph{stable} extension of the celebrated Li--Yau / Zhong--Yang ($K=0$) and Lichnerowicz ($K > 0$) estimates \cite{LiYauEigenvalues,ZhongYangImprovingLiYau,LichnerowiczBook,EscobarLichnerowiczWithConvexBoundary,XiaLichnerowiczWithConvexBoundary} to the $\QCD(Q,K,N)$ setting -- indeed, setting $Q=1$ and applying Theorem \ref{thm:intro-main} to a geodesically convex $\Omega$ (so that $\conv(\Omega) = \Omega$) of diameter at most $D$, the latter sharp spectral-gap estimates are immediately recovered from (\ref{eq:LiYau}) and (\ref{eq:Lichnerowicz}), respectively. 
The same holds if we set $Q > 1$ and let $Q \rightarrow 1$. 

\smallskip

To the best of our knowledge, the model-densities on which the constants $\bar \lambda_{LS}[\CD_{1}(K,N),D]$ are attained have not been completely determined, although the natural conjecture is that the answer is the same as for $\bar \lambda_p$ in (\ref{eq:lambda_p}). Up to numeric constants $C,C' > 1$, this conjecture has been verified for $N=\infty$ by E.~Calderon \cite[Chapter 7]{CalderonThesis}, who showed that:
\[
\bar \lambda_{LS}[\CD_{1}(K,\infty),D] \geq \frac{1}{C} \blambda_{LS}[\exp(-K t^2/2) , [-D/2,D/2] ] \geq \frac{1}{C'} \begin{cases}  (-K)^{\frac{3}{2}} D e^{\frac{K D^2}{8}}  & K < -\frac{1}{D^2} \\ \max(K,\frac{1}{D^2}) & \text{otherwise}   \end{cases} .
\]
Note that $\bar \lambda_{LS}[\CD_{1}(K,N),D] = \frac{N}{N-1} K$ when $K > 0$ and $D \geq \pi \sqrt{(N-1)/K}$ by the Bakry--\'Emery estimate \cite{BakryEmery}. 
The case most interesting for us $K = 0$ is well-known to experts, and in particular:
\[
\bar \lambda_{LS}[\CD_{1}(0,N),D] \geq \frac{1}{C} \blambda_{LS}[1 , [-D/2,D/2] ]  = \frac{1}{C} \frac{\pi^2}{D^2} . 
\]

In conjunction with Corollary \ref{cor:H-group-QCD}, 
Theorem \ref{thm:intro-main} thus immediately yields Theorem \ref{thm:main-application} from the Introduction, which is the main application we have chosen to highlight in this work. Analogously, by invoking Theorem \ref{thm:intro-main} in conjunction with Proposition \ref{prop:MCP-QCD} (and recalling the subsequent comments), or alternatively in conjunction with Corollary \ref{cor:Step1-Carnot-QCD}, 
the $L^p$-Poincar\'e and log-Sobolev inequalities of Theorem \ref{thm:main-application} equally hold on the sub-Riemannian manifolds listed below (with their canonical sub-Riemannian metric and volume measure; the Sasakian and $3$-Sasakian manifolds below$^*$ require appropriate curvature lower bounds detailed in \cite{LeeLiZelenko,BarilariRizzi-Interpolation,BarilariRizzi-BE}; the H-type foliations below$^{**}$ are assumed to have completely parallel torsion and non-negative horizontal sectional curvature \cite{BGMR-HType}). Each of these spaces satisfies $\MCP(0,N)$, $\QCD(Q,0,N)$ and the inequalities of Theorem \ref{thm:main-application} with the values of $N$,$Q$ and $k$ given by the following table:
\vspace{-10pt}
\begin{center}
\begin{tabular}{|c|c | c|c | c | c |}
\hline
space & \makecell{necessarily\\ideal} & \makecell{ topological \\ dimension }&  $N$ & $Q$ & $k$ \\ \hline 
Grushin plane & yes & $n=2$ & $5$ \cite{BarilariRizzi-Interpolation} & $8$ & $3/2$   \\
\hline 
Sasakian manifolds$^*$ & yes & 
$n = 2d+1$ & $2d + 3$ \cite{LeeLiZelenko,BarilariRizzi-Interpolation}  & $4$ & $1$ \\
\hline 
$3$-Sasakian manifolds$^*$ & yes &
$n = 4d+3$ & $4d + 9$ \cite{BarilariRizzi-BE} & $64$ & $3$  \\
\hline 
ideal Carnot groups & yes &  $n$ & $N \in [n,\infty)$ \cite{Rifford-CarnotMCP} & $2^{N-n}$ & $\frac{N-n}{2}$  \\
\hline 
\makecell{H-type foliations$^{**}$\\of corank $k$} & yes & $n$ & $n + 2k$ \cite[$\S$3.7]{BGMR-HType} & $4^k$ & $k$  \\
\hline
\makecell{Carnot groups \\ of corank $1$} & no & $n$ & $n + 2$ \cite{Rizzi-MCPonCarnot, BKS-CorankOneCarnot} & $4$ & $1$  \\
\hline
\end{tabular}
\end{center}


\medskip

Specializing Theorem \ref{thm:main-application} to geodesic balls $\Omega = B_r(x)$, recall that $\conv(\Omega)$ appearing on the energy-side of the inequalities satisfies $\conv(\Omega) \subset B_{2r}(x)$, and so we obtain $L^p$-Poincar\'e and log-Sobolev inequalities on geodesic balls in non-tight form. 
As mentioned in the Introduction, it is always possible to tighten (i.e. replace $B_{2r}(x)$ by $B_r(x)$ on the energy-side) a local $L^p$-Poincar\'e inequality on any length-space, but this would result in a loss of explicit constants and dependence on the underlying dimension (via the doubling constant).

\section{Preliminaries} \label{sec:prelim}

\subsection{Sub-Riemannian Structures}

We refer to \cite{ABB-subRiemannianBook,FigalliRifford-subRiemannian,BarilariRizzi-Interpolation,BarilariRizzi-MCPonHTypeCarnot} and the references therein for more precise information and missing definitions pertaining to sub-Riemannian structures, as these will not be directly required in this work. Below we briefly describe some rudimentary notions. 

A sub-Riemannian structure on a smooth, connected $n$-dimensional manifold $M$ ($n \geq 3$), is defined by a set of $m$ global smooth vector fields $X_1,\ldots,X_m$, called a generating frame. The distribution $\D$ at the point $x \in M$ is defined as: \[
\D_x = \text{span} \{X_1(x),\ldots,X_m(x) \} \subset T_x M .
\]
The generating frame induces a natural inner product $g_x$ on $\D_x$. It is always assumed that the distribution satisfies H\"{o}rmander's bracket-generating condition (each tangent space $T_x M$ is spanned by the vector fields $\{X_i\}$ and their iterated Lie brackets evaluated at $x$). Being slightly imprecise, an absolutely continuous map $\xi : [0,1] \rightarrow M$ is called a horizontal curve if $\dot \xi(t) \in \D_x(\xi(t))$ for almost every $t \in [0,1]$. Its length is defined by:
\[
\ell(\xi) := \int_0^1 \sqrt{g(\dot \xi(t) , \dot \xi(t))} dt . 
\]
The Carnot--Carath\'eodory sub-Riemannian metric $d$ is then defined as:
\[
\d(x,y) := \inf \{ \ell(\xi) \; ; \; \xi(0) = x ~,~ \xi(1) = y ~,~ \xi \text{ is horizontal} \} . 
\]
By the Chow--Rashevskii theorem, the bracket-generating condition implies that $\d : M \times M \rightarrow \R$ is finite and continuous. We will always assume that $(M,\d)$ is complete, in which case the infimum above is always attained; a constant velocity horizontal curve realizing this infimum and parametrized on $[0,1]$ is called a geodesic. If in addition the sub-Riemannian structure $(\D,g)$ admits no abnormal geodesics between distinct points, it is called ideal; roughly speaking, this means that the differential of the end-point map $\xi \rightarrow \xi(1)$ on horizontal paths $\xi$ with fixed initial point $\xi(0)$, is non-singular for any geodesic $\gamma$ of positive length. It is known that complete fat sub-Riemannian structures are ideal, and that the ideal assumption is generic when the distribution $\D$ has constant rank at least $3$. 

In various places, we have emphasized how our results apply to generalized $H$-type groups. These are certain step $2$ Carnot groups, which include the Kaplan H-type groups and the Heisenberg groups, as well as all corank $1$ Carnot groups. A Carnot group of rank $r \geq 1$ and step $s \geq 1$ is a connected, simply connected nilpotent Lie group $G$, whose associated Lie algebra $\mathfrak{g}$ admits a stratification $\mathfrak{g} = \mathfrak{g}_1 \oplus \ldots \oplus \mathfrak{g}_s$ such that $\mathfrak{g}_1,\ldots,\mathfrak{g}_s$ are linear subspaces of $\mathfrak{g}$ satisfying $\mathfrak{g}_s \neq \{0\}$, $[\mathfrak{g}_1, \mathfrak{g}_i] = \mathfrak{g}_{i+1}$ for all $i=1,\ldots,s-1$, $[\mathfrak{g}_1, \mathfrak{g}_s] = \{0\}$, and the degree-one stratum $\mathfrak{g}_1$ has dimension $r$. A left-invariant sub-Riemannian structure is obtained by equipping $\mathfrak{g}_1$ with an inner product. Note that a corank $1$ Carnot group is necessarily of step $2$. 

The Heisenberg group $\H^d$ is an ideal Carnot group of corank $1$. Its elements are $(z_1,\ldots,z_d, t) \in \C^d \times \R \simeq \R^{2d+1}$, with the group structure given by:
\[
(z_1,\ldots,z_d,t) \cdot (z_1',\ldots,z_d',t') = (z_1 + z_1',\ldots,z_d + z_d' , t + t' + \frac{1}{2} \sum_{j=1}^d \text{Im}( \overline{z_j} z_j' )) . 
\]
Its bi-invariant Haar measure is just the Lebesgue measure $\L^{2d+1}$. Its sub-Riemannian structure is given by the global set of left-invariant generating fields:
\[
X_j = \partial_{x_j} - \frac{y_j}{2} \partial_{t} ~,~ Y_j = \partial_{y_j} + \frac{x_j}{2} \partial_{t} ,
\]
where $z_j = x_j + i y_j$. They satisfy the bracket relations $[X_j,Y_l] = \delta_{jl} Z$ and $[X_j , Z] = [Y_j,Z] = 0$, where $Z = \partial_t$.

\subsection{Optimal Transport}

Let $(X, \d)$ be a complete separable metric space endowed with a locally finite Borel measure $\mm$ -- 
such triplets $(X,\d,\mm)$ are called metric measure spaces. We refer to \cite{AmbrosioGigli-UsersGuide,AGS-Book,Gromov,VillaniTopicsInOptimalTransport, VillaniOldAndNew} for background on metric measure spaces in general, and the theory of optimal transport on such spaces in particular. 

We denote by $\geo(X,\d)$ the set of all closed directed constant-speed geodesics parametrized on the interval $[0,1]$. 
We regard $\geo(X,\d)$ as a subset of all Lipschitz maps $\text{Lip}([0,1], X)$ endowed with the uniform topology. 
Recall that $(X,\d)$ is called geodesic if for any $x,y \in X$ there exists $\gamma \in \geo(X,\d)$ with $\gamma_0 = x$ and $\gamma_1 = y$. 
Given a subset $A$ of a geodesic space $(X,\d)$, we denote by $\conv(A)$ the geodesic hull of $A$, namely:
\[
\conv(A) := \cup_{\{ \gamma \in \geo(X,\d) \; ; \; \gamma_0,\gamma_1 \in A \}}  \gamma \; ;
\]
note that $(\conv(A),\d)$ need not be a geodesic space itself. 

The space of all Borel probability measures on $(X,\d)$ is denoted by $\mathcal{P}(X)$. It is naturally equipped with its weak topology, in duality with bounded continuous functions $C_b(X)$ over $X$.  
The subspace of those measures having bounded support is denoted by $\P_c(X)$, and those with finite second moment is denoted by $\mathcal{P}_{2}(X)$. 
The weak topology on $\mathcal{P}_{2}(X)$ is metrized by the $L^{2}$-Wasserstein distance $W_{2}$, defined as follows for any $\mu_0,\mu_1 \in \mathcal{P}(X)$:
\begin{equation}\label{eq:Wdef}
  W_2^2(\mu_0,\mu_1) := \inf_{ \pi} \int_{X\times X} \d^2(x,y) \, \pi(dx , dy),
\end{equation}
where the infimum is taken over all $\pi \in \mathcal{P}(X \times X)$ having $\mu_0$ and $\mu_1$ as the first and the second marginals, respectively; such candidates $\pi$ are called transference plans. 
It is known that the infimum in (\ref{eq:Wdef}) is always attained for any $\mu_0,\mu_1 \in \mathcal{P}(X)$; when this minimum is finite, the collection of transference plans realizing it, called optimal transference plans between $\mu_0$ and $\mu_1$, is denoted by $\Opt(\mu_0,\mu_1)$.

When $\mu_0,\mu_1 \in \mathcal{P}_2(X)$, then necessarily $W_2(\mu_0,\mu_1) < \infty$. In this case, it is known that a transference plan $\pi$ is optimal iff it is supported on a $\d^2$-cyclically monotone set. A set $\Lambda \subset X \times X$ is said to be $c$-cyclically monotone if for any finite set of points $\{(x_{i},y_{i})\}_{i = 1,\ldots,N} \subset \Lambda$ it holds
$$
\sum_{i = 1}^{N} c(x_{i},y_{i}) \leq \sum_{i = 1}^{N} c(x_{i},y_{i+1}),
$$
with the convention that $y_{N+1} = y_{1}$.

As $(X,\d)$ is a complete and separable metric space then so is $(\mathcal{P}_2(X), W_2)$. 
Under these assumptions, it is known that $(X,\d)$ is geodesic if and only if $(\mathcal{P}_2(X), W_2)$ is geodesic. Let $\e_t$ denote the evaluation map: 
\[
  \e_{t} : \geo(X,\d) \ni \gamma\mapsto \gamma_t \in X.
\]
A measure $\nu \in  \mathcal{P}(\geo(X,\d))$ is called an optimal dynamical plan if $(\e_0,\e_1)_{\sharp} \nu$ is an optimal transference plan; it easily follows in that case that
$[0,1] \ni t \mapsto (\e_t)_\sharp \nu$ is a geodesic in $(\mathcal{P}_2(X), W_2)$. It is known that any geodesic $(\mu_t)_{t \in [0,1]}$ in $(\mathcal{P}_2(X), W_2)$ can be lifted to an optimal dynamical plan $\nu$ so that $(e_t)_\sharp \nu  = \mu_t$ for all $t \in [0,1]$ (c.f. \cite[Theorem 2.10]{AmbrosioGigli-UsersGuide}). We denote by $\OptGeo(\mu_{0},\mu_{1})$ the space of all optimal dynamical plans $\nu$ so that $(e_i)_\sharp \nu = \mu_i$, $i=0,1$. By the preceding remarks, it follows that for any closed $\Omega \subset X$ so that $(\Omega,d)$ is geodesic, $\OptGeo(\mu_{0},\mu_{1})$ is non-empty for all $\mu_0,\mu_1 \in \mathcal{P}_2(\Omega)$. 

\subsection{Monge Spaces} \label{subsec:Monge}

\begin{definition}[Monge Space]
A metric measure space $(X,\d,\mm)$ will be called a Monge space, if for any $\mu_0,\mu_1 \in \P_2(X)$ with $\mu_0 \ll \mm$ and $\supp(\mu_1) \subset \supp(\mm)$, the following holds:
\begin{itemize}
\item There exists a unique optimal dynamical plan $\nu \in \OptGeo(\mu_0,\mu_1)$, and hence a unique optimal transference plan $\pi \in \Opt(\mu_0,\mu_1)$; 
\item $\nu$ is induced by a map, namely, there exists  $S : X \to \geo(X,d)$ such that $\nu = S_{\sharp} \mu_{0}$; 
\item Denoting $\mu_t = (\e_t)_{\sharp} \nu$, we have $\mu_t \ll \mm$ for all $t \in [0,1)$. 
\end{itemize}
\end{definition}

It follows from the work of McCann \cite{McCannOTOnManifolds} and Cordero-Erausquin--McCann--Schmuckenschl{\"a}ger \cite{CMSInventiones}  that (smooth, connected) complete Riemannian manifolds $(M,g)$ equipped with their induced geodesic distance $\d$ and volume measure $\vol_g$ are Monge spaces
(strictly speaking, this was shown for $\mu_0,\mu_1 \in \P_c(X)$, but the extension from $\P_c(X)$ to $\P_2(X)$ is nowadays standard -- see e.g. \cite[Subsection 3.4]{FigalliRifford-subRiemannian}).
It was shown by Figalli and Rifford \cite[Sections 3,4]{FigalliRifford-subRiemannian} that very general (smooth, connected) complete sub-Riemannian manifolds $(M,\D,g)$ equipped with their volume measure are also Monge spaces; for instance, this holds for all ideal sub-Riemannian structures \cite[Theorem 5.9]{FigalliRifford-subRiemannian} (see also \cite[Theorem 39]{BarilariRizzi-Interpolation}). 

Another sub-Riemannian setting where the results of \cite{FigalliRifford-subRiemannian} ensure the first two properties above is for general (possibly non-ideal) step 2 Carnot groups (cf. \cite[Subsection 1.3]{BarilariRizzi-MCPonHTypeCarnot} or \cite[Subsection 2.5]{BKS-CorankOneCarnot}). Ensuring the third property requires additional justification. This has been established in the literature under additional assumptions, like being corank $1$ \cite[Proposition 2.4]{BKS-CorankOneCarnot} or generalized H-type \cite[Corollary 4]{BarilariRizzi-MCPonHTypeCarnot} -- the idea is to combine the known $\MCP$ information for these spaces with the fact that step 2 Carnot groups do not admit branching minimizing geodesics, and invoke (\ref{eq:ENB-Monge}) below. 
As pointed out to us by the referee, the same argument actually applies to general step 2 Carnot groups without any further assumptions, since these spaces satisfy $\MCP(0,N)$ for some $N > 1$ by the results of Badreddine--Rifford \cite[Theorem 4]{BadreddineRifford}. 

Clearly, the Monge property continues to hold when the volume measure is replaced by any measure $\mm$ having smooth positive density with respect to the former.

\subsection{Essentially Non-Branching Spaces}

\begin{definition}[Essentially Non-Branching]
A subset $G \subset \geo(X,\d)$ of geodesics is called non-branching if for any $\gamma^{1}, \gamma^{2} \in G$ the following holds:
$$
\exists t \in (0,1) \;\;\; \gamma^{1}_{s} = \gamma^{2}_{s} \;\;\; \forall s \in [0,t] 
\quad 
\Longrightarrow 
\quad 
\gamma^{1}_{s} = \gamma^{2}_{s} \;\;\; \forall s \in [0,1].
$$
$(X,\d)$ is called \emph{non-branching} if $\geo(X,\d)$ is non-branching. 
$(X,\d, \mm)$ is called \emph{essentially non-branching}  if for any $\mu_{0},\mu_{1} \ll \mm$ in $\mathcal{P}_{2}(X)$, any $\nu \in \OptGeo(\mu_{0},\mu_{1})$ is concentrated on a Borel non-branching subset $G\subset \geo(X,\d)$.
\end{definition}
\noindent Recall that a measure $\nu$ on a measurable space $(\Omega,\F)$ is said to be concentrated on $A \subset \Omega$ if $\exists B \subset A$ with $B \in \F$ so that $\nu(\Omega \setminus B) = 0$.

\smallskip
The above definition was introduced in \cite{RajalaSturm-StrongCD} by Rajala and Sturm, who showed that $\RCD(K,\infty)$ spaces are essentially non-branching. 
The restriction to essentially non-branching spaces is natural and facilitates 
avoiding pathological cases: as an example of possible pathological behaviour we mention the
failure of the local-to-global property of $\CD(K,N)$ within this class of spaces; in particular,
a heavily-branching metric measure space verifying a local version of $\CD(0, 4)$ which does not verify $\CD(K,N)$ for any
fixed $K \in \R$ and $N \in [1,\infty]$ was constructed by Rajala in \cite{RajalaFailureOfLocalToGlobal}, while the local-to-global
property of $\CD(K,N)$ has been recently verified in \cite{CavallettiEMilman-LocalToGlobal} for essentially non-branching metric measure spaces (with finite $\mm$).

\smallskip

It is easy to realize that a Monge space is necessarily essentially non-branching (e.g. \cite[Corollary 6.15]{CavallettiEMilman-LocalToGlobal}). Conversely, it was shown by Cavalletti and Mondino in \cite{CavallettiMondino-ENB-MCP} that an essentially non-branching space satisfying the Measure Contraction Property $\MCP(K,N)$ (for some $K \in \R$ and $N \in (1,\infty)$, defined next) is a Monge space: 
\begin{equation} \label{eq:ENB-Monge}
\text{essentially non-branching} + \MCP(K,N) \;\; \Rightarrow \;\; \text{Monge} \;\; \Rightarrow \;\; \text{essentially non-branching} . 
\end{equation}

\subsection{$\MCP(K,N)$}

The Measure Contraction Property $\MCP(K,N)$, $N \in (1,\infty)$, introduced by Ohta \cite{Ohta-MCP} and Sturm \cite{SturmCD2}, is a certain weak variant of the Curvature-Dimension condition $\CD(K,N)$. On general metric measure spaces the two definitions slightly differ, but on essentially non-branching (and hence Monge) spaces they coincide.
Recall the definition of the functions $\sigma^{(t)}_{K,N-1}$ and $\tau^{(t)}_{K,N}$ from Section \ref{sec:results}.

\begin{definition}[Measure Contraction Property $\MCP(K,N)$ on Monge spaces]
A Monge space $(X,\d,\mm)$ is said to satisfy $\MCP(K,N)$ if for any $\mu_0,\mu_1 \in \P_2(X)$, $\mu_0 \ll \mm$ and $\supp(\mu_1) \subset \supp(\mm)$,
writing $\mu_t = (\ee_t)_{\#} \nu = \rho_t \mm$ where $\nu$ is the unique element of $\OptGeo(\mu_0,\mu_1)$,
we have for all $t \in [0,1)$:
\begin{equation} \label{eq:MCP-def}
\rho_t^{-1/N}(\gamma_t) \geq  \tau_{K,N}^{(1-t)}(\d(\gamma_0,\gamma_1)) \rho_0^{-1/N}(\gamma_0)  \;\;\; \text{for $\nu$-a.e. $\gamma \in \geo(X,\d)$} .
\end{equation}
\end{definition}

\noindent
In fact, as follows from e.g. \cite[Proposition 9.1]{CavallettiEMilman-LocalToGlobal}, it is enough to test the above for:
\begin{equation} \label{eq:MCP-point}
 \mu_0 = \frac{1}{\mm(B)} \mm\llcorner_{B} \; \text{ with } \; 0 < \mm(B) < \infty ~,~  \mu_1 = \delta_{o} \text{ with } o \in \supp(\mm). 
 \end{equation}

Since some of our results are formulated on essentially non-branching spaces, we also mention for completeness the a-priori weaker (but by (\ref{eq:ENB-Monge}), equivalent) definition on the latter spaces (see \cite[Proposition 9.1]{CavallettiEMilman-LocalToGlobal}): for any $\mu_0,\mu_1$ as in (\ref{eq:MCP-point}), one should \emph{require} the existence of $\nu\in\OptGeo(\mu_0,\mu_1)$ so that  $\mu_t := (e_t)_{\#} \Pi \ll \mm$ for all $t \in [0,1)$, and so that writing $\mu_t = \rho_t \mm$, (\ref{eq:MCP-def}) holds for each $t \in [0,1)$.

It was shown in \cite{Ohta-MCP,SturmCD2} that the following (sharp) Bonnet-Myers diameter bound holds:
\[
{\diam(\supp \mm)} \leq D_{K,N}  := \left\{\begin{array}{lll} \frac{\pi}{\sqrt{K / (N-1)}} & \text{if}~~ K > 0 , \\+\infty & \text{otherwise} 
; \end{array}  \right . 
\]
we remark that while this is obvious from our present definition and the fact that $\tau_{K,N}(\theta) = +\infty$ if $\theta \geq D_{K,N}$, the above bound was shown in \cite{Ohta-MCP} under an a-priori weaker (but ultimately equivalent) definition of $\MCP(K,N)$ where the set $B$ above is assumed to be a subset of $B(o, D_{K,N})$ and in addition $(\supp \mm, \d)$ is a-priori assumed to be a length-space.  

\subsection{$\CD(K,N)$}

The Curvature-Dimension condition $\CD(K,N)$ has been defined on a general metric measure space independently 
in several seminal works by Sturm and Lott--Villani: the case $N=\infty$ and $K \in \Real$ was defined in \cite{SturmCD1} and \cite{LottVillaniGeneralizedRicci}, the case $N \in [1,\infty)$ in \cite{SturmCD2} for  $K \in \Real$ and in \cite{LottVillaniGeneralizedRicci} for $K=0$ (and subsequently for $K \in \Real$ in \cite{LottVillani-WeakCurvature}). 

In this work, we will only require the definition for Monge spaces with $N \in (1,\infty)$.
\begin{definition}[$\CD(K,N)$ for Monge Spaces] \label{def:CDKN-Monge}
A Monge space $(X,\d,\mm)$ is said to satisfy $\CD(K,N)$ if
for any $\mu_0,\mu_1 \in \P_2(X)$ with $\mu_0,\mu_1 \ll \mm$,  writing $\mu_t = (\ee_t)_{\#} \nu = \rho_t \mm$ where $\nu$ is the unique element of $\OptGeo(\mu_0,\mu_1)$,
we have for all $t \in [0,1]$:
\[
\rho_t^{-1/N}(\gamma_t) \geq  \tau_{K,N}^{(1-t)}(\d(\gamma_0,\gamma_1)) \rho_0^{-1/N}(\gamma_0) + \tau_{K,N}^{(t)}(\d(\gamma_0,\gamma_1)) \rho_1^{-1/N}(\gamma_1) \;\;\; \text{for $\nu$-a.e. $\gamma \in \geo(X,\d)$} .
\]
\end{definition}

When $N \in (1,\infty)$, it is known that if $(X,\d,\mm)$ satisfies $\CD(K,N)$ or $\MCP(K,N)$ then $(\supp(\mm),\d)$ is proper (every closed bounded set is compact) and geodesic; in addition, by approximating $\delta_{o}$ by $\mu_1^\eps = \mm(B(o,\eps))^{-1}\mm \llcorner_{B(o,\eps)}$, it is also known that the $\CD(K,N)$ condition implies the $\MCP(K,N)$ one (e.g. \cite[Section 6]{CavallettiEMilman-LocalToGlobal}).

\begin{remark}
Note that the definitions of $\MCP(K,N)$ and $\CD(K,N)$ given in this section employ $\mu_0,\mu_1 \in \P_2(X)$, whereas the ones given in Section \ref{sec:results} employed $\mu_0,\mu_1 \in \P_c(X)$. On Monge spaces so that $(\supp(\mm),\d)$ is proper (the proof of properness is valid for either variant), these two variants are completely equivalent -- see e.g. the proof of \cite[Proposition 9.1]{CavallettiEMilman-LocalToGlobal}. \end{remark}

\subsection{$\MCP(K,N)$ densities}

\begin{definition}[$\MCP(K,N)$ density]
A non-negative $h \in L^1_{loc}(\R,\mathcal L^1)$ is called an $\MCP(K,N)$ density if:
\[ h(tx_1+(1-t)x_0) \geq \sigma^{(1-t)}_{K, N-1} (|x_1-x_0|)^{N-1} h(x_0)
\] for all $x_0, x_1 \in \supp h$ and $t\in [0,1]$. 
\end{definition}

We use $\supp h$ throughout this work to denote $\supp(h \mathcal L^1)$, where recall, $\mathcal L^1$ denotes the Lebesgue measure on $\R$.
The following is well-known (see e.g. \cite[Lemma 4.1]{HanEMilman-MCP-Poincare}):

\begin{lemma} \label{lem:MCP-density-1} 
The one-dimensional metric-measure space $(\R, |\cdot|,  h \mathcal L^1)$ satisfies $\MCP(K,N)$ if and only if (up to modification on a null-set) $h$ is a $\MCP(K,N)$ density. 
\end{lemma}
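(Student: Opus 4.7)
The plan is to verify both directions by an explicit computation of the Wasserstein geodesic from an absolutely continuous measure to a Dirac mass in one dimension, making crucial use of the identity
\[
\tau^{(t)}_{K,N}(\theta)^N \;=\; t \cdot \sigma^{(t)}_{K,N-1}(\theta)^{N-1},
\]
which is immediate from the definitions recalled in Section~\ref{sec:results}. Throughout, $(\R,|\cdot|,h\L^1)$ is easily seen to be a Monge space (one-dimensional monotone rearrangement), so it suffices to test $\MCP(K,N)$ on pairs of the form $\mu_0=\frac{1}{h\L^1(B)}(h\L^1)\llcorner_B$ and $\mu_1=\delta_o$, by the reduction (\ref{eq:MCP-point}).

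For the ``$\Leftarrow$" direction, assume $h$ is an $\MCP(K,N)$ density and fix $o\in\supp h$ and a bounded Borel $B\subset\supp h$ with $h\L^1(B)>0$. The unique optimal dynamical plan from $\mu_0$ to $\delta_o$ is supported on geodesics $\gamma$ of the form $\gamma_s = (1-s)x+so$ with $x\in B$, so by an elementary change of variables (using that the Jacobian of $T_t(x):=(1-t)x+to$ equals $(1-t)$), the density $\rho_t$ of $\mu_t$ with respect to $h\L^1$ satisfies
\[
\rho_t\bigl((1-t)x+to\bigr) \;=\; \frac{1}{1-t}\cdot \frac{h(x)}{h((1-t)x+to)}\,\rho_0(x).
\]
Substituting this into (\ref{eq:MCP-def}), raising to the power $-N$, and invoking the $\tau$--$\sigma$ identity above, the required inequality reduces to
\[
h\bigl((1-t)x+to\bigr) \;\geq\; \sigma^{(1-t)}_{K,N-1}(|o-x|)^{N-1} h(x)
\]
for $\mu_0$-a.e.\ $x\in B$, which is exactly the $\MCP(K,N)$ density condition applied with $x_0=x$, $x_1=o$ at parameter $1-t$.

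For the ``$\Rightarrow$" direction, assume the $\MCP(K,N)$ inequality holds. Fix $x_0,x_1\in\supp h$ with $x_0\neq x_1$ and $t\in(0,1)$, and for small $r>0$ apply $\MCP(K,N)$ with $\mu_0^r = \frac{1}{h\L^1(B_r(x_0))}(h\L^1)\llcorner_{B_r(x_0)}$ and $\mu_1=\delta_{x_1}$. Using the same explicit formula for $\rho_{1-t}$ and the $\tau$--$\sigma$ identity, the $\MCP(K,N)$ inequality localized on the geodesic through $x_0,x_1$ rearranges to
\[
\frac{1}{h\L^1(B_r(x_0))}\int_{B_r(x_0)}\!\!\frac{h(tx_1+(1-t)y)}{h(y)}\,h(y)\,dy \;\geq\; \sigma^{(t)}_{K,N-1}(|x_1-x_0|)^{N-1}
\]
modulo the averaged factor; letting $r\to 0$ along Lebesgue points of $h$ and of $y\mapsto h(tx_1+(1-t)y)$ (which form a full-measure set in $\supp h$) yields the pointwise density inequality $\L^1$-a.e. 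Selecting a precise representative of $h$ on $\supp h$ (e.g.\ the lower semicontinuous envelope, which is finite since the density inequality implies local semi-continuity on $\supp h$ up to a null set) upgrades this to holding \emph{everywhere} on $\supp h$, which is exactly the statement of the lemma up to modification on a null set.

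The only mildly delicate point is the extraction of a ``good" representative of $h$ for the converse direction; this is routine in one dimension, since the density inequality, once known almost everywhere, propagates to all of $\supp h$ by the joint continuity of $(t,\theta)\mapsto \sigma^{(t)}_{K,N-1}(\theta)$. All other steps are a bookkeeping exercise with the distortion coefficients.
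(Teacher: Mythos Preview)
The paper does not actually prove this lemma; it merely cites it as well-known (referring to \cite[Lemma 4.1]{HanEMilman-MCP-Poincare}). Your approach is the standard one and is essentially correct, though there are a few bookkeeping slips worth flagging.

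In the ``$\Leftarrow$'' direction your computation is right, but the final clause should read ``at parameter $t$'' rather than ``at parameter $1-t$'': with $x_0=x$, $x_1=o$ and parameter $t$, the density definition gives precisely $h(to+(1-t)x)\geq\sigma^{(1-t)}_{K,N-1}(|o-x|)^{N-1}h(x)$, which is what you derived.

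In the ``$\Rightarrow$'' direction, you write $\rho_{1-t}$ where you mean $\rho_t$ (the point $(1-t)y+tx_1$ is reached at time $t$ along the geodesic from $y$ to $x_1$), and your displayed averaged inequality is garbled: the integrand $\frac{h(tx_1+(1-t)y)}{h(y)}\cdot h(y)$ collapses to $h(tx_1+(1-t)y)$, and the exponent on $\sigma$ should be $(1-t)$ rather than $(t)$. The cleaner route is simply to observe that the $\MCP$ inequality, unwound via your density formula and the $\tau$--$\sigma$ identity, already gives
\[
h((1-t)y+tx_1)\;\geq\;\sigma^{(1-t)}_{K,N-1}(|x_1-y|)^{N-1}\,h(y)
\]
for $h\L^1$-a.e.\ $y\in B_r(x_0)$; since this holds for every $r>0$, it holds for a.e.\ $y\in\supp h$ (for each fixed $t,x_1$), and one then enumerates over rational $t$ and a countable dense set of $x_1$, using continuity of $\sigma^{(\cdot)}_{K,N-1}(\cdot)$, before passing to a good representative of $h$ as you describe. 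No averaging or limit in $r$ is actually needed.
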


\begin{lemma} \label{lem:MCP-density-2}
Let $h$ be an $\MCP(K,N)$ density. Then $\supp h \subset \R$ is a closed interval, $h$ is locally bounded above on $\supp h$, it is positive and locally Lipschitz on its interior $\intt \supp h$, and we may modify $h$ at the end points $\supp h \setminus \intt \supp h$ so that $h$ is continuous on $\supp h$. 
\end{lemma}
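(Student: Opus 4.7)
My approach rests on a single structural observation that reduces the MCP density inequality to a monotonicity statement for an auxiliary function. Setting $\kappa := K/(N-1)$ and fixing any $c \in \supp h$, the function
\[
H_c(y) \;:=\; \frac{h(y)^{1/(N-1)}}{s_\kappa(c - y)}
\]
is non-decreasing on $\supp h \cap (-\infty, c)$, with a symmetric non-increasing analogue for $y \mapsto h(y)^{1/(N-1)}/s_\kappa(y - c')$ on $\supp h \cap (c', +\infty)$. Indeed, parametrizing $y := (1-t)y_0 + tc$ one obtains $c - y = (1-t)(c-y_0)$, so
\[
\sigma^{(1-t)}_{K,N-1}(|c - y_0|) \;=\; \frac{s_\kappa((1-t)(c-y_0))}{s_\kappa(c-y_0)} \;=\; \frac{s_\kappa(c-y)}{s_\kappa(c-y_0)},
\]
and the MCP density inequality $h(y)^{1/(N-1)} \geq \sigma^{(1-t)}_{K,N-1}(|c-y_0|)\, h(y_0)^{1/(N-1)}$ becomes precisely $H_c(y) \geq H_c(y_0)$.

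Armed with this, I would first show that $\supp h$ is a closed interval by the standard convexity argument: for any $x_0 < x_1$ in $\supp h$ and $x_t = (1-t)x_0 + tx_1$ with $t \in (0,1)$, applying MCP to pairs $(y_0, y_1)$ drawn from the positive-measure subsets of $\supp h$ near $x_0, x_1$ on which $h > 0$ produces a positive-measure set near $x_t$ on which $h > 0$, forcing $x_t \in \supp h$; the degenerate case $K|x_1-x_0|^2 \geq \pi^2(N-1)$ cannot occur since it would force $h = +\infty$ on a set of positive measure, contradicting local integrability. Local upper-boundedness of $h$ on $\supp h$ then follows from the monotonicity of $H_c$: for an interior $c \in \supp h$ and $y < y_1 < c$ in $\supp h$, one has $h(y) \leq (s_\kappa(c-y)/s_\kappa(c-y_1))^{N-1}\, h(y_1)$, and picking $y_1$ where $h(y_1) < \infty$ (available on a set of full Lebesgue measure by local integrability) yields a uniform bound for $y$ in any compact subinterval of $\supp h \cap [a, c)$; the behaviour near the right endpoint is handled symmetrically. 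Positivity of $h$ on $\intt \supp h$ is then a direct consequence of MCP: for $y \in \intt \supp h$ pick $x_0 \in \supp h$ on one side of $y$ with $h(x_0) > 0$ (such $x_0$ exists in every neighbourhood of any support point by definition of support) and $x_1 \in \supp h$ on the other, whereupon MCP yields $h(y) \geq \sigma^{(1-t)}_{K,N-1}(|x_1-x_0|)^{N-1}\, h(x_0) > 0$.

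Local Lipschitz regularity on $\intt \supp h$ is obtained by expanding MCP infinitesimally: for $x, y \in \intt \supp h$ with $|y-x|$ small and $x_1 \in \supp h$ at a fixed bounded distance on the side of $y$ beyond $x$, write $y = tx_1 + (1-t)x$ with $t = (y-x)/(x_1-x)$ small; Taylor-expanding $\sigma^{(1-t)}_{K,N-1}(|x_1-x|)^{N-1} = 1 - C|y-x| + O(|y-x|^2)$ and invoking local boundedness of $h$ yields $h(x) - h(y) \leq C'|y-x|$, and interchanging $x \leftrightarrow y$ (with $x_1$ relocated to the opposite side) gives the matching reverse one-sided bound. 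Finally, continuity at the endpoints of $\supp h$ follows directly from the monotonicity of the first paragraph: for the left endpoint $a$ and any $c \in \intt \supp h$, $H_c$ is non-decreasing on $[a, c) \cap \supp h$ and bounded there (by local boundedness of $h$ together with $s_\kappa(c-y) \geq s_\kappa(\delta) > 0$ for $y \in [a, c-\delta]$, using $c - a < D_{K,N}$ from the MCP diameter bound), so the one-sided limit $L := \lim_{y \to a^+} H_c(y)$ exists and is finite; the continuity of $s_\kappa$ then yields $\lim_{y \to a^+} h(y) = s_\kappa(c-a)^{N-1} L^{N-1}$, and we redefine $h(a)$ to this value (analogously at the right endpoint). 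I expect the main conceptual content to lie in identifying the monotonicity of $H_c$; once that is established, the remaining claims follow by standard arguments.
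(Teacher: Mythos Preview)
Your proposal is correct, and the monotonicity of $H_c(y) = h(y)^{1/(N-1)}/s_\kappa(c-y)$ is precisely the right structural observation; it cleanly yields local boundedness, positivity, local Lipschitz regularity via the first-order expansion you describe, and most satisfyingly, the existence of one-sided limits at the endpoints of $\supp h$.

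The paper's own proof takes a different route in presentation: it simply cites \cite[Lemmas A.8 and A.9]{CavallettiEMilman-LocalToGlobal} for local boundedness, positivity and local Lipschitz continuity on the interior, and then appeals to lower semi-continuity of $h$ (which follows from the MCP defining inequality by sending $t \to 0$) to justify modifying the endpoint values. Your approach is more self-contained and arguably more transparent at the endpoints: the monotone-limit argument for $H_c$ directly establishes that $\lim_{y \to a^+} h(y)$ exists, whereas lower semi-continuity alone only gives $\liminf_{y \to a^+} h(y) \geq h(a)$ and does not by itself rule out oscillation (the paper is implicitly relying on the cited lemmas, or on the complementary upper semi-continuity available at interior points, to fill this in). The trade-off is that the paper's proof is a two-line citation, while yours is a full derivation; since the monotonicity trick you use is standard in the $\CD/\MCP$ literature, the underlying mathematics is essentially the same.
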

\begin{proof}
By definition of $\MCP(K,N)$ density, $\supp h$ is clearly convex, and is thus a closed interval. As follows from \cite[Lemmas A.8 and A.9]{CavallettiEMilman-LocalToGlobal} (which were stated for $\CD(K,N)$ densities of finite mass, but the proof only uses the defining property of $\MCP(K,N)$ densities and the local properties only require locally finite mass), $h$ is locally bounded above on $\supp h$, and is positive and locally Lipschitz on $\intt \supp h$. Lastly, since an $\MCP(K,N)$ density is clearly lower semi-continuous, we may modify the values of $h$ at the end points if necessary to ensure that $h$ is continuous on the entire $\supp h$.
\end{proof}
 
\subsection{Localization on $\MCP$ spaces}

Recall that given a measure space $(X,\mathscr{X},\mm)$, a set $A \subset X$ is called $\mm$-measurable if $A$ belongs to the completion of the $\sigma$-algebra $\mathscr{X}$, generated by adding to it all subsets of null $\mm$-sets; similarly, a function $f : (X,\mathscr{X},\mm) \rightarrow \R$ is called $\mm$-measurable if all of its sub-level sets are $\mm$-measurable. We denote by $\mathcal{M}(X,\mathscr{X})$ the collection of measures on $(X,\mathscr{X})$. 
We denote by $\Haus^1$ the one-dimensional Hausdorff measure on the underlying metric space. 

\begin{definition}[Disintegation on sets] \label{def:disintegration}
\label{defi:dis}
Let $(X,\mathscr{X},\mm)$ denote a measure space. 
Given any family $\{X_q\}_{q \in \Q}$ of subsets of $X$, a \emph{disintegration of $\mm$ on $\{X_q\}_{q \in \Q}$} is a measure-space structure 
$(\Q,\mathscr{Q},\qq)$ and a map
\[
\Q \ni q \longmapsto \mm_{q} \in \mathcal{M}(X,\mathscr{X})
\]
so that:
\begin{itemize}
\item For $\qq$-a.e. $q \in \Q$, $\mm_q$ is concentrated on $X_q$.
\item For all $B \in \mathscr{X}$, the map $q \mapsto \mm_{q}(B)$ is $\qq$-measurable.
\item For all $B \in \mathscr{X}$, $\mm(B) = \int_{\Q} \mm_{q}(B)\, \qq(dq)$; this is abbreviated by  $\mm = \int_{\Q} \mm_{q} \qq(dq)$.
\end{itemize}
\end{definition}

\begin{theorem}[Localization on $\MCP(K,N)$ spaces]\label{thm:localization-MCP}
Let $(X,\d,\mm)$ be an essentially non-branching metric measure space
satisfying the $\MCP(K,N)$ condition for some $K \in \R$ and $N \in (1, \infty)$. Let $g : X \rightarrow \R$ be $\mm$-integrable with $\int_X g \mm = 0$ and $\int_X|g(x)| \d(x,x_0) \mm(dx) < \infty$ for some (equivalently, all) $x_0 \in X$. Then there exists an $\mm$-measurable subset $\T \subset X$ and a family $\{X_{q}\}_{q \in \Q} \subset X$, such that: 
\begin{enumerate}
\item There exists a disintegration of $\mm\llcorner_{\T}$ on $\{X_{q}\}_{q \in \Q}$:
\[
\mm\llcorner_{\T} = \int_{\Q} \mm_{q} \, \qq(dq)  ~,~ \qq(\Q) = 1 . 
\]
\item For $\qq$-a.e. $q \in \Q$, $X_q$ is a closed geodesic in $(X,\d)$. 
\item For $\qq$-a.e. $q \in \Q$, $\mm_q$ is a Radon measure supported on $X_q$ with $\mm_q \ll  \Haus^1\llcorner_{X_q}$. 
\item For $\qq$-a.e. $q \in \Q$, the metric measure space $(X_{q}, \d,\mm_{q})$ verifies $\MCP(K,N)$.
\item For $\qq$-a.e. $q \in \Q$, $\int g \mm_q = 0$, and $g \equiv 0$  $\mm$-a.e. on $X \setminus \T$. 
\end{enumerate}
\end{theorem}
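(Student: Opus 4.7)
The plan is to follow the $L^1$-optimal-transport approach to localization of Cavalletti--Mondino (building on Klartag and Bianchini--Cavalletti), using the function $g$ as the driver of the transport. First, decompose $g = g_+ - g_-$ into positive and negative parts, and note that $\int g_+ \mm = \int g_- \mm$ by the balance assumption; normalize to obtain two probability measures $\mu_\pm := g_\pm \mm / \int g_\pm \mm$ with finite first moments (this uses $\int |g(x)| \d(x,x_0)\, \mm(dx) < \infty$). Solve the Monge--Kantorovich problem for $\mu_+, \mu_-$ with cost $\d$, and extract a $1$-Lipschitz Kantorovich potential $u : X \to \R$ such that $u(x) - u(y) = \d(x,y)$ on the support of every optimal plan. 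Define the associated closed relation
\[
\Gamma := \{(x,y) \in X \times X : u(x) - u(y) = \d(x,y)\}
\]
and its inverse $\Gamma^{-1}$; set $R := \Gamma \cup \Gamma^{-1}$. The transport set $\T$ is defined as the set of points admitting at least one non-trivial forward and backward ray direction, i.e.\ $\T := \{x : \exists\, y \neq x, (x,y) \in R\}$ with the usual further regularization (removing the ``branching'' boundary set) needed to make $R$ restrict to an equivalence relation on $\T$.

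Next, show that the equivalence classes $\{X_q\}_{q \in \Q}$ of $R\cap(\T\times\T)$ are exactly closed geodesics of $(X,\d)$: points of an equivalence class are totally ordered by $u$, and between any two of them the geometric $u$-level difference coincides with the distance, so they are swept out by unit-speed geodesics parametrized by $u$. Essential non-branching together with the $\MCP(K,N)$ assumption is the key input that ensures (i) the branching set, where several maximal rays meet, is $\mm$-negligible, so that $R$ genuinely partitions $\T$ up to an $\mm$-null set, and (ii) the quotient map $Q : \T \to \Q$ is $\mm$-measurable onto a standard Borel quotient. Apply the disintegration theorem in the form available on Polish spaces to obtain a $\qq := Q_\sharp(\mm\llcorner_\T)$-a.e.\ uniquely determined family of conditional probability measures $\mm_q$ with $\mm\llcorner_\T = \int \mm_q \, \qq(dq)$ and $\mm_q$ concentrated on $X_q$. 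Finally, by the definition of $\T$, points outside $\T$ are fixed by the transport ($g_+$ and $g_-$ agree there after a density argument), hence $g \equiv 0$ $\mm$-a.e.\ on $X\setminus\T$.

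To upgrade the conditional measures to Radon measures absolutely continuous with respect to $\Haus^1\llcorner_{X_q}$ and to verify the one-dimensional $\MCP(K,N)$ property on each ray, the idea is to slice the global $\MCP(K,N)$ inequality along the partition. Pick a small tube around a ray, use the $\MCP(K,N)$ inequality in the ambient space between $\mu_0 = \frac{1}{\mm(B)}\mm\llcorner_B$ and a point mass at a far endpoint on the same ray, and pass to the limit as $B$ shrinks to a short arc of the ray. The non-branching property ensures the optimal dynamical plans stay in the same ray, so the ambient distortion coefficient $\tau^{(1-t)}_{K,N}(\d(\gamma_0,\gamma_1))$ reduces to the one-dimensional coefficient on the ray, and the conditional density $h_q := d\mm_q/d\Haus^1\llcorner_{X_q}$ inherits the $\MCP(K,N)$ density inequality. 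Finally, the balance $\int g \, \mm_q = 0$ holds for $\qq$-a.e.\ $q$: since $u$ is a Kantorovich potential between $g_+\mm$ and $g_-\mm$, the restriction of $g\mm$ to any maximal ray must be cyclically monotone-balanced, for otherwise one could strictly lower the total transport cost by rearranging mass within that ray, contradicting optimality.

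The main obstacle is the rigorous verification of steps (3) and (4): showing that the conditional measures $\mm_q$ are absolutely continuous and satisfy the one-dimensional $\MCP(K,N)$ condition. This is where essential non-branching is genuinely used, and it requires the measurable selection of a parametrization of rays compatible with $u$ together with a Fubini-type argument applied to the ambient $\MCP$ inequality. The remaining steps are largely a bookkeeping adaptation of the $\CD$-localization framework to the one-sided $\MCP$ setting, where the only change is replacing two-sided distortion coefficients $\tau^{(t)}_{K,N}$ and $\tau^{(1-t)}_{K,N}$ with the single one-sided coefficient.
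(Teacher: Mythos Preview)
Your outline is essentially correct and follows exactly the Cavalletti--Mondino $L^1$-optimal-transport approach that the paper invokes; indeed, the paper's own proof of this theorem is simply a citation to \cite[Theorem 3.5]{CavallettiMondino-Laplacian} combined with the proof of \cite[Theorem 5.1]{CavallettiMondino-Localization}, with the remark that $\T$ is the transport set of the Kantorovich potential $u$ for the $L^1$-transport between $g_+\mm$ and $g_-\mm$. So you have sketched the contents of the references rather than proposed an alternative route.

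One imprecision worth flagging: your justification of the ray-wise balance $\int g\,\mm_q = 0$ via ``otherwise one could strictly lower the total transport cost'' is not the actual mechanism. The correct argument is structural: any optimal plan between $g_+\mm$ and $g_-\mm$ is supported on $\{u(x)-u(y)=\d(x,y)\}$, which forces $x$ and $y$ to lie on the same transport ray; hence mass is only moved \emph{within} rays, and the marginal constraint then gives $\int_{X_q} g_+\,\mm_q = \int_{X_q} g_-\,\mm_q$ for $\qq$-a.e.\ $q$. Also, when $\mm$ is merely locally finite, defining $\qq$ as the pushforward $Q_\sharp(\mm\llcorner_\T)$ needs an additional normalization step (this is where \cite{CavallettiMondino-Laplacian} is specifically needed over the earlier finite-$\mm$ versions).
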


The localization paradigm on $\MCP(K,N)$ spaces has its roots in the work of Bianchini and Cavalletti in the non-branching setting (c.f. \cite[Theorem 9.5]{BianchiniCavalletti-MongeProblem}), and was extended to essentially non-branching $\MCP(K,N)$ spaces with $N < \infty$ and finite $\mm$ in \cite[Theorem 7.10 and Remark 9.2]{CavallettiEMilman-LocalToGlobal} (building upon \cite{Cavalletti-MongeForRCD}) and for general $\mm$ in \cite[Theorem 3.5]{CavallettiMondino-Laplacian}. The idea to use $L^1$-transport between the positive and negative parts $g_+ := \max(g,0)$ and $g_- := (-g)_+$ of the balanced function $g$ to ensure that it remains balanced along the localization is due to Klartag \cite{KlartagLocalizationOnManifolds} (see \cite{CavallettiMondino-Localization} for an adaptation to the metric measure space setting).

\begin{proof}[Proof of Theorem \ref{thm:localization-MCP}]
Simply combine \cite[Theorem 3.5]{CavallettiMondino-Laplacian} with the proof of \cite[Theorem 5.1]{CavallettiMondino-Localization}. Up to modification on a $\mm$-null-set, the set $\T$ is the transport set of the $1$-Lipschitz Kantorovich potential $u$ associated to the $L^1$-Optimal-Transport between $g_+ \mm$ and $g_- \mm$, which consists of geodesics $\{X_q\}$ on which the function $u$ is affine with slope $1$; for more details, see the proof of Theorem \ref{thm:gen-loc} below. 
 \end{proof}

\section{A general localization theorem} \label{sec:localization}

Our first observation in this work is the following:

\begin{theorem}[General Localization Theorem] \label{thm:gen-loc}

Let $(X,\d,\mm)$ be an essentially non-branching metric measure space
satisfying the $\MCP(K',N')$ condition for some $K'\in \R$ and $N' \in (1, \infty)$; in particular, the space is Monge by (\ref{eq:ENB-Monge}). 

Let $N \in (1,\infty)$, and let $(0,1) \times \R_+ \ni (t,\theta) \rightarrow \sigma^{(t)}_i(\theta) \in [0,+\infty]$, $i=0,1$, be continuous in each variable.
Assume that:
\begin{itemize}
\item
for all $\mu_0,\mu_1 \in \P_c(X)$ with $\mu_0,\mu_1 \ll \mm$, 
writing $\mu_t = (\ee_t)_{\#} \nu = \rho_t \mm$ where $\nu$ is the unique element of $\OptGeo(\mu_0,\mu_1)$,
we have for all $t \in (0,1)$:
\begin{multline} \label{eq:gen-loc-assumption}
\rho_t^{-\frac{1}{N}}(\gamma_t) \geq  (1-t)^{\frac{1}{N}} \sigma^{(1-t)}_0(\d(\gamma_0,\gamma_1))^{\frac{N-1}{N}} \rho_0^{-\frac{1}{N}}(\gamma_0) + t^{\frac{1}{N}} \sigma^{(t)}_1(\d(\gamma_0,\gamma_1))^{\frac{N-1}{N}} \rho_1^{-\frac{1}{N}}(\gamma_1) , \\
\text{for $\nu$-a.e. $\gamma \in \geo(X,\d)$. }
\end{multline}
\end{itemize}

 Let $g : X \rightarrow \R$ be $\mm$-integrable with $\int_X g \mm = 0$ and $\int_X|g(x)| \d(x,x_0) \mm(dx) < \infty$ for some (equivalently, all) $x_0 \in X$. 
 Then all the conclusions of Theorem \ref{thm:localization-MCP} hold, and in addition:
\begin{enumerate}
\setcounter{enumi}{5}
\item \label{item:loc-new} For $\qq$-a.e. $q \in \Q$,  $\mm_q = h_q \mathcal H^1\llcorner_{X_q}$ with continuous density $h_q : X_q \rightarrow \R_+$ satisfying: 
\[
h^{\frac{1}{N-1}}_q(x_t) \geq \sigma^{(1-t)}_0(\d(x_0,x_1)) h^{\frac{1}{N-1}}_q(x_0) + \sigma^{(t)}_1(\d(x_0,x_1)) h^{\frac{1}{N-1}}_q(x_1) \;\;\; \forall x_0,x_1 \in X_q \;\; \forall t \in (0,1) ,
\]
where $x_t$ denotes the unique point on $X_q$ so that $\d(x_t,x_0) = t \d(x_0,x_1)$ and $\d(x_t,x_1) = (1-t) \d(x_0,x_1)$. 
\end{enumerate}
\end{theorem}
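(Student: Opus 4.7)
The plan is to deduce items (1)--(5) directly from Theorem~\ref{thm:localization-MCP} and then to obtain the additional inequality (6) by propagating the ambient interpolation assumption (\ref{eq:gen-loc-assumption}) onto each transport ray.

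First, apply Theorem~\ref{thm:localization-MCP} to $g$: since $(X,\d,\mm)$ is essentially non-branching and $\MCP(K',N')$, its hypotheses are met, producing the transport set $\T$, the quotient $(\Q,\mathscr{Q},\qq)$, the partition $\{X_q\}_{q\in\Q}$ into transport rays of the $1$-Lipschitz Kantorovich potential $u$ for the $L^1$-problem between $g_+\mm$ and $g_-\mm$, and the disintegration $\mm\llcorner_{\T}=\int_{\Q}\mm_q\,\qq(dq)$ satisfying items (1)--(5). Items (3)--(4) together with Lemmas~\ref{lem:MCP-density-1}--\ref{lem:MCP-density-2} then yield $\mm_q=h_q\,\Haus^1\llcorner_{X_q}$ for $\qq$-a.e. $q$, with $h_q$ continuous on $\supp h_q$ and positive, locally Lipschitz on its interior; this already handles the structural part of (6).

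For the pointwise inequality in (6), follow the template of Cavalletti--Mondino \cite{CavallettiMondino-Localization}. Fix a ray $X_{q_0}$, two interior points $x_0,x_1\in\intt\supp h_{q_0}$, and a small $\delta>0$; pick a small $\qq$-neighborhood $\tilde\Q\ni q_0$ and construct ambient test measures $\mu_0^\delta,\mu_1^\delta\ll\mm$ supported in the thin tubes $P^{-1}(\tilde\Q)\cap u^{-1}([u(x_i)-\delta,u(x_i)+\delta])$, where $P:\T\to\Q$ is the quotient map, with densities of the form $\chi_i^\delta(u(\cdot))/h_q(\cdot)$ for suitable bump profiles $\chi_i^\delta$ carefully calibrated in amplitude and shape to the local values of $h_q$. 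Since $u$ is affine of slope~$1$ along each ray, this makes the ray-wise $\mm$-masses of $\mu_0^\delta$ and $\mu_1^\delta$ agree on each $X_q$ with $q\in\tilde\Q$; combined with the Monge property and the $u$-structure, this forces the unique $W_2$-optimal dynamical plan to move mass only along rays, so that the $t$-interpolant $\mu_t^\delta$ disintegrates ray-by-ray into the one-dimensional $W_2$-interpolation between the ray-wise restrictions of $\mu_0^\delta$ and $\mu_1^\delta$. Plugging this structure into (\ref{eq:gen-loc-assumption}), reading off the information on $X_{q_0}$, and passing to the concentration limit $\delta\to 0$ (using continuity of $h_q$, $u$, and the coefficients $\sigma^{(\cdot)}_0,\sigma^{(\cdot)}_1$), after a cancellation of normalizing factors arranged by the calibration of the $\chi_i^\delta$, yields the pointwise inequality of (6) at the triple $(x_0,x_t,x_1)$. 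A standard Fubini / countable-dense selection then upgrades this to the conclusion holding simultaneously for all $x_0,x_1\in X_q$ and $t\in(0,1)$ on a $\qq$-full-measure set of $q$.

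The main obstacle is precisely the ``crucial difference'' emphasized in the paper: the interpolation inequality (\ref{eq:gen-loc-assumption}) does \emph{not} enjoy the local-to-global property even in one dimension --- this already fails for $\MCP(K,N)$ and for $\QCD(Q,K,N)$ with $Q>1$. Unlike in the $\CD$ setting, one cannot restrict attention to infinitesimally close pairs $x_0,x_1$ and patch the resulting infinitesimal inequalities along the ray; the test measures must be centered at the actual, possibly distant, points $x_0,x_1$ from (6), and (\ref{eq:gen-loc-assumption}) must be applied directly to the long-range $W_2$-transport between them. The technical core of the argument is therefore (a)~to verify that this globally spread transport is genuinely ray-preserving (leveraging the affinity of $u$ on each ray together with uniqueness of optimal plans on Monge spaces) and (b)~to calibrate the bump profiles $\chi_i^\delta$ so that, after the concentration limit, the ambient $\rho_t^{-1/N}$ inequality delivers exactly the targeted $h_q^{1/(N-1)}$-inequality with coefficients $\sigma^{(1-t)}_0,\sigma^{(t)}_1$, rather than some weaker $1/N$-exponent version that would arise from naive uniform bump choices.
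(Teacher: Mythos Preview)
Your overall architecture matches the paper's: invoke Theorem~\ref{thm:localization-MCP} for items (1)--(5), use the $\MCP(K',N')$ information to get continuous positive densities $h_q$, then feed carefully constructed ray-fibered test measures $\mu_0,\mu_1$ into the ambient assumption~(\ref{eq:gen-loc-assumption}) to extract the one-dimensional inequality. You also correctly identify the central difficulty (no local-to-global, so the tests must sit at the actual points $x_0,x_1$) and the subsidiary one (naive uniform bumps give only a $1/N$-exponent statement, not the required $1/(N-1)$).

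The gap is in your proposed resolution of that second issue. You introduce a \emph{single} concentration parameter $\delta$ together with bump profiles $\chi_i^\delta$ ``calibrated in amplitude and shape to the local values of $h_q$'', and then want to send $\delta\to 0$. But the profiles $\chi_i^\delta$ are, by your own formula, functions of $u$ alone and hence the \emph{same} on every ray in $\tilde\Q$, while the values $h_q(x_0),h_q(x_1)$ vary with $q$; there is no single choice of bump that is ``calibrated'' simultaneously for all rays in the neighborhood. The paper resolves this by a different and cleaner mechanism: it uses \emph{two independent} width parameters $\eps_0,\eps_1>0$ (uniform indicator bumps $\frac{1}{2\eps_i}\mathbf{1}_{\{|u-a_i|\le\eps_i\}}$ on every ray), applies~(\ref{eq:gen-loc-assumption}), and reads off for $\qq$-a.e.\ $q$ the intermediate inequality
\[
\eps_t^{1/N} h_q^{1/N}(a_t)\ \ge\ (1-t)^{1/N}\sigma_0^{(1-t)}(a_1-a_0)^{\frac{N-1}{N}}\eps_0^{1/N}h_q^{1/N}(a_0)
+\ t^{1/N}\sigma_1^{(t)}(a_1-a_0)^{\frac{N-1}{N}}\eps_1^{1/N}h_q^{1/N}(a_1),
\]
with $\eps_t=(1-t)\eps_0+t\eps_1$. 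Enumerating over rational $\eps_0,\eps_1$ gives this for \emph{all} $\eps_0,\eps_1>0$ on a single $\qq$-full set; only \emph{then}, for each fixed $q$, does one optimize the ratio $\eps_1/\eps_0$ (the optimum depends on $h_q(a_0),h_q(a_1)$ and is written out explicitly in the paper) to convert the $1/N$-exponent inequality into the claimed $h_q^{1/(N-1)}$-inequality. In short: the calibration is performed \emph{after} the interpolation inequality and ray-by-ray, not before and uniformly, and it requires two free parameters rather than a single concentration scale.

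A smaller point: for the ray-preservation of the $W_2$-optimal plan you appeal to ``the Monge property and the $u$-structure''. The actual mechanism is Cavalletti's lemma that any set $\Delta\subset X\times X$ on which $(u(y_1)-u(y_0))(u(x_1)-u(x_0))\ge 0$ is $\d^2$-cyclically monotone; the map $T$ defined by the affine relation $\frac{u(T(x))-a_1}{\eps_1}=\frac{u(x)-a_0}{\eps_0}$ satisfies this, which is what certifies optimality of the along-ray transport. Your sketch does not name this ingredient, and without it the claim that the unique optimal plan is the along-ray one is not justified.
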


\begin{remark}
To handle infinite values of $\sigma_i$, we use the convention that $\infty \cdot 0 = 0$.
\end{remark} 

\begin{remark}
The assumption that the space is $\MCP(K',N')$ may be relaxed, and is only included to guarantee some a-priori good properties of the space, like being Monge, being proper and having absolutely continuous conditional measures $\mm_q \ll \mathcal H^1\llcorner_{X_q}$ with continuous densities in the disintegration of $\mm \llcorner_{\T}$. For reasonable choices of $\sigma_0,\sigma_1$ this would in any case be guaranteed, but we avoid this extraneous generality, especially since we would like to apply the localization theorem in the $\QCD$ setting to functions for which $\sigma_0^{(1)},\sigma_1^{(1)} < 1$. 
\end{remark}

The proof below is based on the proof of the localization theorem for essentially non-branching $\CD(K,N)$ spaces by Cavalletti and Mondino \cite[Theorem 5.1]{CavallettiMondino-Localization}. However, as already mentioned in Section \ref{sec:results}, there is one crucial difference -- in \cite{CavallettiMondino-Localization}, the authors extensively used the fact that the $\CD(K,N)$ condition on a one-dimensional metric measure space enjoys the local-to-global property, and so it is enough to establish it locally on the geodesic $X_q$. Consequently, the authors only required the local $\CD_{loc}(K,N)$ condition to deduce their localization theorem. In contrast, the above condition employing general functions $\sigma_0,\sigma_1$ will typically \textbf{not} satisfy the local-to-global property even on a one-dimensional space (for example, this is the case for $\MCP(K,N)$ when $\sigma_1 = 0$ or for $\QCD(Q,K,N)$ when $Q > 1$), and so we are required to directly obtain the global property on $X_q$. This requires modifying the argument in several places and taking care of some additional technical points. 

\begin{proof}[Proof of Theorem \ref{thm:gen-loc}]
Without loss of generality, we may assume that $\supp(\mm) = X$, otherwise we restrict from $(X,\d,\mm)$ to $(\supp(\mm),\d,\mm)$ without altering any of the above properties of the space (see e.g. \cite[Section 6]{CavallettiEMilman-LocalToGlobal}). The $\MCP(K',N')$ assumption implies that $(X,\d)$ is proper and geodesic. 
Recall from Theorem \ref{thm:localization-MCP} the disintegration:
\begin{equation} \label{eq:disintegration}
\mm \llcorner_{\T} = \int_{\Q}  \mm_q \qq(dq) ,
\end{equation}
where for $\qq$-a.e. $q \in \Q$, $\mm_q$ is a Radon measure (in particular, finite on compact sets) supported on the closed geodesic $X_q$, $\mm_q \ll \Haus^1\llcorner_{X_q}$, and $(X_q,\d,\mm_q)$ satisfies $\MCP(K',N')$. Of course, we may identify $(X_q,\d,\mm_q)$ with $(I_q,|\cdot|,\bar h_q \L^1)$ for an appropriate closed interval $I_q \subset \R$ via a unit-speed parametrization of the geodesic $X_q$. 
Consequently, Lemmas \ref{lem:MCP-density-1} and \ref{lem:MCP-density-2} imply that for $\qq$-a.e. $q \in \Q$, we may write:
\[
\mm_q = h_q \Haus^1\llcorner_{X_q} ,
\]
with $h_q$ being an $\MCP(K',N')$ density which is continuous on $X_q$ and positive on its relative interior  $\relint X_q$. 

It remains to establish assertion (\ref{item:loc-new}) of Theorem \ref{thm:gen-loc}. 
To this end, let us recall from the work of Cavalletti and Mondino how the geodesics $X_q$ are constructed and how the disintegration (\ref{eq:disintegration}) is obtained (see \cite[Section 3]{CavallettiMondino-Localization}, \cite[Section 7]{CavallettiEMilman-LocalToGlobal} and \cite{Cavalletti-OTSurveyL1} for the case that $\mm$ is finite, and \cite[Section 3]{CavallettiMondino-Laplacian} for an adaptation to the case when $\mm$ is only assumed locally finite, and hence $\sigma$-finite by properness). Let $u$ denote the Kantorovich potential associated to the $L^1$-Optimal-Transport (corresponding to the cost $c(x,y) = \d(x,y)$) between $g_+ \mm$ and $g_- \mm$. 
Let $\Gamma := \{ (x,y) \in X \times X \; ;\; u(x) - u(y) = \d(x,y) \}$ and $\Gamma^{-1} := \{ (x,y) \in X \times X \; ; \; (y,x) \in \Gamma \}$. The \emph{transport relation} $R$ and the \emph{transport set} $\T$ are defined as:
\[
R := \Gamma \cup \Gamma^{-1} ~,~ \T := P_{1}(R \setminus \{ x = y \}) ,
\]
where $P_{i}$ is the projection onto the $i$-th component. Note that $R$ is closed, and it is easy to show that $\T$ is $\sigma$-compact. 
  The \emph{non-branched transport set} $\T^b$ is defined as $\T \setminus (A_+ \cup A_-)$, where $A_\pm$ denote the sets of forward and backward branching points, respectively (see \cite{CavallettiMondino-Localization}). The \emph{non-branched transport relation} is defined as $R^b := R \cap (\T^b \times \T^b)$. One can show that $A_{\pm}$ are $\sigma$-compact and hence $\T^b$ and $R^b$ are Borel. A crucial observation is that on Monge spaces of full support, $\mm(\T \setminus \T^b) = \mm(A_+ \cup A_-) = 0$. 
 
It turns out that $R^b$ is an equivalence relation over $\T^b$, and that for all $x \in \T^b$, $(R(x),\d)$ (where $R(x) := \{ y ; (x,y) \in R \}$) is isometric to a closed interval in $(\R,\abs{\cdot})$. Denote by $\Q$ the set of equivalence classes induced by $R^b$ over $\T^b$, and let $\QQ : \T^b \rightarrow \Q$ denote the quotient map. A disintegration theorem guarantees the existence of the 
 disintegration (\ref{eq:disintegration})  of $\mm\llcorner_{\T} = \mm\llcorner_{\T^b}$ strongly consistent with the partition 
of $\T^{b}$ given by the equivalence classes $\{R^b(q)\}_{q \in \Q}$ of $R^{b}$. 
The geodesics $\{X_q\}$ are obtained as the closure of each equivalence class in $\T^b$, and hence have disjoint relative interiors $\{\relint{X}_q\}$. Note that the function $u$ is affine on each $X_q$ with slope $1$.

As explained in \cite[Section 3]{CavallettiMondino-Localization} and \cite[Section 3]{CavallettiMondino-Laplacian}, up to modifying $\T^b$ and $\Q$ on $\mm$-null and $\qq$-null sets, respectively, the set $\Q$ can in fact be realized as a Borel subset of $\T^b$ so that (equipping $\Q$ with the trace $\sigma$-algebra) the quotient map $\QQ: \T^b \rightarrow \Q$ is Borel measurable and so that $\qq$ is a Borel probability measure on $\Q$. By inner regularity of Borel probability measures,
 it follows that, up to modification on a $\qq$-null set, $\Q$ is $\sigma$-compact; we write $\Q = \cup_{k=1}^\infty \Q^k$ with $\Q^k$ compact in $(X,\d)$.

The ray map $\r:  \Q \times \R \supset \text{Dom}(\r) \rightarrow \T^b$ is defined via:
\begin{align*}
\text{graph}(\r) & := \{ (q,t,x) \in \Q \times [0,\infty) \times \T^b \; ; \; (q,x) \in \Gamma ~,~ \d(q,x) = t \} \\
 & \cup  \{ (q,t,x) \in \Q \times (-\infty,0] \times \T^b \; ; \; (x,q) \in \Gamma ~,~ \d(x,q) = -t \} . 
\end{align*}
By definition $\text{Dom}(\r) := \r^{-1}(\T^b)$. 
It is known that $\r$ is a Borel map. After these preparations, we can finally commence the proof of assertion (6).

\medskip
Given $k$ and real parameters $a_0<a_1$ and $\eps_0,\eps_1 > 0$, denote:
\[
\Q^k_{a_0,a_1,\eps_0,\eps_1} := \set{ q \in \Q^k \; ; \; \text{$[\min(a_0-\eps_0,a_1 -\eps_1), \max(a_0+\eps_0,a_1+\eps_1)]$ is in the interior of $u(X_q)$} } .
\]
Note that:
\[
\Q^k_{a_0,a_1,\eps_0,\eps_1} = \Q^k \cap \bigcup_{n \geq 1} \left ( \begin{array}{ccc} P_1 \r^{-1}(u^{-1}(\min(a_0-\eps_0,a_1 -\eps_1) - 1/n))  \\ \cap \\  P_1 \r^{-1}(u^{-1}( \max(a_0+\eps_0,a_1+\eps_1) + 1/n)) \end{array} \right )   .
\]
Since $\Q^k$ is compact, since $u$ is Lipschitz and $\r$ is Borel, since the projection of a Borel set is analytic and hence universally measurable \cite[Section 4.3]{Srivastava-Book}, and since $\qq$ is a Borel measure, it follows that $\Q^k_{a_0,a_1,\eps_0,\eps_1}$ is $\qq$-measurable.

Let $a_0 < a_1$ and $\eps_0,\eps_1 > 0$ be such that $\qq(\Q^k_{a_0,a_1,\eps_0,\eps_1}) > 0$. Consider the measures:
\begin{equation} \label{eq:mui}
\mu_i := \frac{1}{\qq(\Q^k_{a_0,a_1,\eps_0,\eps_1})} \int_{\Q^k_{a_0,a_1,\eps_0,\eps_1}}  \frac{1}{2 \eps_i} \Haus^1\llcorner_{X_q} 1_{\set{ |u - a_i| \leq \eps_i}}  \qq(dq) \;\; , \;\; i=0,1 . 
\end{equation}
We postpone showing that $\mu_i$ are well-defined Borel measures on $(X,\d)$ (namely, the $\qq$-measurability of $q \mapsto \Haus^1\llcorner_{X_q}(B)$ given a Borel set $B \subset X$) to Lemma \ref{lem:measurability}. 
Since $[a_i- \eps_i , a_i + \eps_i] \subset u(X_q)$ for all $q \in \Q^k_{a_0,a_1,\eps_0,\eps_1}$, we see that $\mu_i$ are probability measures. Since $\Q^k$ is compact and $u$ is affine with slope $1$ on each $X_q$, it follows that $\mu_i$ are compactly supported. Moreover, we claim that $\mu_i \ll \mm$ with Radon--Nykodim derivative $\rho_i$ given by:
\begin{equation} \label{eq:rho}
\rho_i(x) := \frac{1}{\qq(\Q^k_{a_0,a_1,\eps_0,\eps_1})} \frac{1}{2 \eps_i} 1_{\set{ |u(x) - a_i| \leq \eps_i}} \frac{1}{h_q(x)} \text{ for } x \in \relint{X}_q ~,~ q \in \Q^k_{a_0,a_1,\eps_0,\eps_1}  \;\; , \;\; i=0,1 , 
\end{equation}
and $\rho_i(x) = 0$ otherwise. Indeed, this is a good definition for $\mm$-a.e. $x$, since the relative interiors of $X_q$ are disjoint (after perhaps removing a $\qq$-null set of $q$'s), and $\mm_q \ll \Haus^1\llcorner_{X_q}$ (and hence does not charge $X_q \setminus \relint X_q$) for $\qq$-a.e. $q$. 
Establishing the $\mm$-measurability of $\rho_i$ is postponed to Lemma \ref{lem:measurability}. 
It follows by (\ref{eq:disintegration}) that necessarily $\mu_i = \rho_i \mm$. 
Consider the map $T : X \rightarrow X$ which given $x \in \relint{X}_q$ with $q \in \Q^k_{a_0,a_1,\eps_0,\eps_1}$, produces the unique $T(x) \in X_q$ so that:
\begin{equation} \label{eq:T-def}
\frac{u(T(x)) - a_1}{\eps_1} = \frac{u(x) - a_0}{\eps_0} . 
\end{equation}
Let $G \subset X$ be the set on which $T$ is well-defined as described above. By the above arguments, 
$G$ has full $\mu_0$-measure (and hence may be assumed Borel), and on $G$ we have:
\[
T(x) = \r\brac{\QQ(x), u(\QQ(x)) - \eps_1 \frac{u(x) - a_0}{\eps_0} - a_1} ,
\]
so that $T$ is Borel measurable (as $\r$, $\QQ$ and $u$ are), and we have $T_{\sharp} \mu_0 = \mu_1$. Denote by $\pi \in \P(X \times X)$ the transference plan between $\mu_0$ and $\mu_1$ given by $(\Id \times T)_{\sharp} \mu_0$. 
We now use the following crucial observation due to Cavalletti \cite[Lemma 4.4]{Cavalletti-MongeForRCD} (cf. \cite[Lemma 4.1]{CavallettiMondino-Localization}), which connects the $L^1$-optimal-transport induced by $u$ with $L^2$-optimal-transport, and lies at the heart of the proof. 

\begin{lemma}
If $\Delta \subset X \times X$ is a set so that:
\[
(x_0,y_0) , (x_1,y_1) \in \Delta \;\; \Rightarrow \;\; (u(y_1) - u(y_0))(u(x_1) - u(x_0))  \geq 0 ,
\]
then $\Delta$ is $\d^2$-cyclically monotone. 
\end{lemma}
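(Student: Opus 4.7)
The plan is to reduce the claim to the elementary fact that in $\R$, $|\cdot|^2$-cyclical monotonicity of a set of pairs coincides with their comonotonicity. The bridge is provided by the Kantorovich potential $u$, which is $1$-Lipschitz and, more importantly, is affine with slope $\pm 1$ along the rays of the transport relation $R$ on which the pairs of $\Delta$ implicitly live.

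First, I would use the $1$-Lipschitz property of $u$ to obtain the cross-bound $\d(x_i, y_{\sigma(i)}) \geq |u(x_i) - u(y_{\sigma(i)})|$ for any finite collection $\{(x_i,y_i)\}_{i=1}^N \subset \Delta$ and any cyclic permutation $\sigma$. For the diagonal pairs themselves I would use that $(x_i, y_i) \in R$, which forces $u(x_i) - u(y_i) = \pm \d(x_i, y_i)$ and hence $\d^2(x_i,y_i) = (u(x_i) - u(y_i))^2$.

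Next, I would invoke the one-dimensional rearrangement inequality: if $(b_j - b_i)(a_j - a_i) \geq 0$ for all $i,j$, then $\sum_i (a_i - b_i)^2 \leq \sum_i (a_i - b_{\sigma(i)})^2$ for every cyclic permutation $\sigma$. Applied to $a_i := u(x_i)$ and $b_i := u(y_i)$, which are comonotone by the hypothesis on $\Delta$, this gives $\sum_i (u(x_i) - u(y_i))^2 \leq \sum_i (u(x_i) - u(y_{\sigma(i)}))^2$. Chaining these three ingredients yields
\[
\sum_i \d^2(x_i,y_i) = \sum_i (u(x_i) - u(y_i))^2 \leq \sum_i (u(x_i) - u(y_{\sigma(i)}))^2 \leq \sum_i \d^2(x_i, y_{\sigma(i)}),
\]
which is precisely $\d^2$-cyclical monotonicity.

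The main obstacle I anticipate is conceptual rather than technical: the hypothesis as stated, namely pointwise comonotonicity of $u$ on $x$- and $y$-coordinates, is not by itself enough to produce $\d^2$-cyclical monotonicity (it would be trivially satisfied by $u \equiv 0$ for any $\Delta$). One must exploit the implicit information, provided by the surrounding proof of the localization theorem, that $\Delta$ is supported on the transport relation $R$ of the $L^1$-Kantorovich problem associated to $u$. It is precisely this interplay between the $L^1$ and $L^2$ optimal transport geometries, first observed by Cavalletti, that upgrades a $1$-Lipschitz cross-bound into an equality on the diagonal and thereby propagates the $L^1$-cyclical monotonicity already encoded in $R$ into $L^2$-cyclical monotonicity along comonotone sub-selections of rays.
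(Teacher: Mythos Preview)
The paper does not give its own proof of this lemma; it is quoted verbatim from Cavalletti \cite[Lemma 4.4]{Cavalletti-MongeForRCD} and used as a black box inside the proof of Theorem~\ref{thm:gen-loc}. Your argument --- push everything through $u$, use $1$-Lipschitzness for the cross terms, equality $\d(x_i,y_i)=|u(x_i)-u(y_i)|$ on the diagonal, and the one-dimensional rearrangement inequality for comonotone sequences --- is exactly the proof given in the cited reference, so there is nothing further to compare.

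You are also correct, and it is worth recording, that the lemma as literally stated in the paper is false without the implicit hypothesis $\Delta \subset R$ (equivalently $\d(x,y)=|u(x)-u(y)|$ for every $(x,y)\in\Delta$): with $u\equiv 0$ the comonotonicity condition is vacuous yet $\d^2$-cyclical monotonicity certainly is not. The surrounding proof only ever applies the lemma to $\Delta=\{(x,T(x)):x\in G\}$, where $T$ moves points along their own transport ray $X_q$, so the needed inclusion holds and the omission is harmless in context. The original formulation in Cavalletti's paper does include this assumption.
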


Note that the set $\Delta = \{(x,T(x)) ; x \in G\}$ satisfies the above property, since by (\ref{eq:T-def}):
\[
\frac{u(T(x_1)) - u(T(x_0))}{\eps_1} = \frac{u(x_1) - u(x_0)}{\eps_0} \;\;\; \forall x_0,x_1 \in G .  
\]
It follows that $\Delta$ is $\d^2$-cyclically monotone, and as $\pi$ is concentrated on $\Delta$, we deduce that $\pi$ is the (unique) optimal transference plan between $\mu_0$ and $\mu_1$.

Denoting by $\gamma_T(x)$ the geodesic from $x$ to $T(x)$ in $X_q$ (for $x \in G$), it follows that $\nu := (\gamma_T)_{\sharp} \mu_0$ is the (unique) optimal dynamical plan between $\mu_0$ and $\mu_1$. Setting $\mu_t = (\ee_t)_{\sharp} \nu$, we clearly have for all $t \in [0,1]$ that:
\[
\mu_t := \frac{1}{\qq(\Q^k_{a_0,a_1,\eps_0,\eps_1})} \int_{\Q^k_{a_0,a_1,\eps_0,\eps_1}}  \frac{1}{2 \eps_t} \Haus^1\llcorner_{X_q} 1_{\set{ |u - a_t| \leq \eps_t}}  \qq(dq)  ,
\]
where $a_t := (1-t) a_0 + t a_1$ and $\eps_t := (1-t) \eps_0 + t \eps_1$. Writing $\mu_t = \rho_t \mm$, we deduce from (\ref{eq:disintegration}) as before the following representation for the densities:
\[
\rho_t(x) = \frac{1}{\qq(\Q^k_{a_0,a_1,\eps_0,\eps_1})} \frac{1}{2 \eps_t} 1_{\set{ |u(x) - a_t| \leq \eps_t}} \frac{1}{h_q(x)} ,
\]
for $x \in \relint{X}_q$ and $\qq$-a.e. $q \in \Q^k_{a_0,a_1,\eps_0,\eps_1}$. 

For notational convenience, given a closed geodesic $X_q$, we identify it with the closure $L_q$ of the interval $(\inf  u(X_q) , \sup u(X_q)) \subset \R$ (by mapping $x \in X_q$ to the unique $s \in L_q$ so that $u(x) = s$). 
Applying our assumption (\ref{eq:gen-loc-assumption}), it follows that given $t \in (0,1)$, for $\qq$-a.e. $q \in \Q^k_{a_0,a_1,\eps_0,\eps_1}$, and for $\Haus^1$-a.e. $s_0 \in [a_0 - \eps_0 , a_0 + \eps_0]$, we have:
\[
\eps_t^{\frac{1}{N}} h^{\frac{1}{N}}_q(s_t) \geq  (1-t)^{\frac{1}{N}} \sigma^{(1-t)}_0(s_1 - s_0)^{\frac{N-1}{N}} \eps_0^{\frac{1}{N}} h^{\frac{1}{N}}_q(s_0) + t^{\frac{1}{N}} \sigma^{(t)}_1(s_1 - s_0)^{\frac{N-1}{N}} \eps_1^{\frac{1}{N}} h^{\frac{1}{N}}_q(s_1) ,
\]
where $s_t = (1-t) s_0 + t s_1$, and $s_1$ is given by:
\[
\frac{s_0 - a_0}{\eps_0} = \frac{s_1 - a_1}{\eps_1} . 
\]
Since $\sigma^{(1-t)}_0$ and $\sigma^{(t)}_1$ are assumed continuous, and since $h_q$ is continuous and positive 
on $\relint{L}_q$ for  $\qq$-a.e. $q$, the above actually holds for \emph{all} $s_0 \in [a_0 - \eps_0,a_0 + \eps]$ (and in particular, for $s_0 = a_0$), for $\qq$-a.e. $q \in \Q^k_{a_0,a_1,\eps_0,\eps_1}$. Namely, given $t \in (0,1)$, for any $k$, $a_0 < a_1$ and $\eps_0, \eps_1 > 0$, we have:
\begin{equation} \label{eq:gen-loc-key}
\eps_t^{\frac{1}{N}} h^{\frac{1}{N}}_q(a_t) \geq  (1-t)^{\frac{1}{N}} \sigma^{(1-t)}_0(a_1 - a_0)^{\frac{N-1}{N}} \eps_0^{\frac{1}{N}} h^{\frac{1}{N}}_q(a_0) + t^{\frac{1}{N}} \sigma^{(t)}_1(a_1 - a_0)^{\frac{N-1}{N}} \eps_1^{\frac{1}{N}} h^{\frac{1}{N}}_q(a_1) ,
\end{equation}
for $\qq$-a.e. $q \in \Q^k_{a_0,a_1 ,\eps_0,\eps_1}$. Enumerating over $k$ and all rational values of $a_0 < a_1$ and $\eps_0,\eps_1 > 0$, and using the continuity of $\sigma^{(1-t)}_0$ and $\sigma^{(t)}_1$ and also of $h_q$ on $\relint{L}_q$, it follows that given $t \in (0,1)$, there exists a single $\qq$-null set $\mathcal{N}_t$, so that for all $q \in \Q \setminus \mathcal{N}_t$, (\ref{eq:gen-loc-key}) holds for all $a_0 < a_1$ in $\relint{L}_q$ and $\eps_0,\eps_1 > 0$ small enough. Optimizing on the choice of $\eps_i>0$, we set:
\begin{align*}
\eps_0 & :=  \frac{\delta}{1-t} \cdot \frac{ \sigma^{(1-t)}_0(a_1 - a_0) h^{\frac{1}{N-1}}_q(a_0)}{ \sigma^{(1-t)}_0(a_1 - a_0) h^{\frac{1}{N-1}}_q(a_0) + \sigma^{(t)}_1(a_1 - a_0) h^{\frac{1}{N-1}}_q(a_1)} , \\
\eps_1 & := \frac{\delta}{t} \cdot \frac{ \sigma^{(t)}_1(a_1 - a_0) h^{\frac{1}{N-1}}_q(a_1)}{ \sigma^{(1-t)}_0(a_1 - a_0) h^{\frac{1}{N-1}}_q(a_0) + \sigma^{(t)}_1(a_1 - a_0) h^{\frac{1}{N-1}}_q(a_1)} ,
\end{align*}
for some small enough $\delta > 0$, and thus deduce from (\ref{eq:gen-loc-key}) that given $t \in (0,1)$, for all $q \in \Q \setminus \mathcal{N}_t$:
\begin{equation} \label{eq:gen-loc-key2}
h^{\frac{1}{N-1}}_q(a_t) \geq \sigma^{(1-t)}_0(a_1 - a_0) h^{\frac{1}{N-1}}_q(a_0) + \sigma^{(t)}_1(a_1 -a_0) h^{\frac{1}{N-1}}_q(a_1) \;\;\; \forall a_0,a_1 \in \relint{L}_q .
\end{equation}
In fact, since $h_q$ was modified to be continuous on the entire $L_q$, the above holds for all $a_0,a_1 \in L_q$, if we interpret $\infty \cdot 0$ as $0$ (recall that $\sigma_i$ are allowed to be infinite). 
It remains to apply this to all rational $t \in (0,1)$, and by invoking the continuity of $(0,1) \ni t \mapsto \sigma^{(t)}_i(\theta)$ and of $h_q$, we deduce that for $\qq$-a.e. $q \in \Q$, (\ref{eq:gen-loc-key2}) holds for all $a_0,a_1 \in L_q$ and $t \in (0,1)$. This concludes the proof. 
\end{proof}

It remains to address a couple of measurability issues which arose during the proof above; we continue using the same notation as there (see also an alternative argument in Remark \ref{rem:uniqueness} below). 

\begin{lemma} \label{lem:measurability}
\hfill
\begin{enumerate}
\item For any Borel set $B \subset X$, $\Q \ni q \mapsto \Haus^1\llcorner_{X_q}(B)$ is $\qq$-measurable. 
\item The map $\text{Dom}(\r) \ni (q,t) \mapsto h_q(\r(q,t))$ is $\qq \otimes \L^1$-measurable. 
\item The densities $\rho_i$ defined in (\ref{eq:rho}) are $\mm$-measurable. 
\end{enumerate}
\end{lemma}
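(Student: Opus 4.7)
The plan is to exploit the ray map $\r: \text{Dom}(\r) \to \T^b$, which is Borel measurable and restricts on each fiber $\{q\}\times L_q$ to an arc-length parametrization of $X_q$. Since $\QQ$ is Borel and $u$ is affine with unit slope along each ray, the inverse $\r^{-1}:\T^b \to \text{Dom}(\r)$ admits the explicit Borel formula $x \mapsto (\QQ(x),u(\QQ(x))-u(x))$, so $\r$ is a Borel isomorphism between $\text{Dom}(\r)$ and $\T^b$; I use this freely below.

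For (1), given a Borel $B \subset X$, the arc-length property yields $\Haus^1\llcorner_{X_q}(B) = \L^1(\{t : (q,t) \in \r^{-1}(B)\})$, and the $\qq$-measurability in $q$ of the Lebesgue measure of vertical slices of the Borel set $\r^{-1}(B) \subset \Q \times \R$ is standard Fubini--Tonelli measurability.

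For (2), consider the Borel measure $\nu := (\r^{-1})_{\sharp}(\mm\llcorner_{\T^b})$ on $\text{Dom}(\r)$. If $(\qq\otimes\L^1)(E)=0$, then by Fubini, for $\qq$-a.e.\ $q$ the slice $E_q$ satisfies $\L^1(E_q)=0$, so $\Haus^1\llcorner_{X_q}(\r(E))=0$ by the arc-length property, so $\mm_q(\r(E))=0$ since $\mm_q\ll\Haus^1\llcorner_{X_q}$, and integrating gives $\nu(E)=\mm(\r(E))=0$. Thus $\nu\ll\qq\otimes\L^1$, and Radon--Nikodym produces a $\qq\otimes\L^1$-measurable density $\tilde h'$. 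On the other hand, combining the disintegration (\ref{eq:disintegration}) with the fibrewise arc-length change of variables gives
$$ \nu(A\times B) = \int_A \int_B h_q(\r(q,t))\,dt\,\qq(dq) $$
for every Borel $A \subset \Q$, $B \subset \R$. For each fixed $B$, equating this with $\int_{A\times B}\tilde h'\,d(\qq\otimes\L^1)$ and varying $A$ shows that for $\qq$-a.e.\ $q$, $\int_B \tilde h'(q,t)\,dt = \int_B h_q(\r(q,t))\,dt$; testing against countably many intervals $B$ with rational endpoints and invoking the continuity of $h_q$ on $\relint X_q$ forces $\tilde h'(q,\cdot) = h_q(\r(q,\cdot))$ $\L^1$-a.e.\ for $\qq$-a.e.\ $q$, which delivers the claimed $\qq\otimes\L^1$-measurability of $\tilde h(q,t):=h_q(\r(q,t))$.

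For (3), up to the obviously Borel dependencies of $\rho_i$ on $u(x)$ and $\QQ(x)$, the only task is to show that $H:\T^b\to\R_+$, $H(x):=h_{\QQ(x)}(x)=\tilde h(\r^{-1}(x))$, is $\mm$-measurable. Given Borel $B \subset \R$, write $\tilde h^{-1}(B) = A \cup N$ with $A \subset \text{Dom}(\r)$ Borel and $N$ contained in a Borel $\qq\otimes\L^1$-null set $N_0$. Then $H^{-1}(B) = \r(A)\cup\r(N)$: both $\r(A)$ and $\r(N_0)$ are analytic (Borel images under the Borel map $\r$), hence universally measurable and in particular $\mm$-measurable, with $\mm(\r(N_0)) = \nu(N_0)=0$. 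So $H^{-1}(B)$ differs from the $\mm$-measurable set $\r(A)$ by a subset of an $\mm$-null set, and is $\mm$-measurable. The main subtlety of the lemma lies precisely in this last transfer: since $\r$ only sends Borel sets to analytic (not necessarily Borel) ones, one must invoke the universal measurability of analytic sets to close the argument.
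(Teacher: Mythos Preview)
Your proof is correct and the overall strategy for parts (1) and (3) matches the paper's, but your treatment of part (2) is genuinely different and worth highlighting.

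For (2), the paper proceeds by hand: it first shows $q \mapsto \int_I h_q(\r(q,\tau))\,d\tau$ is $\qq$-measurable for each compact interval $I$ (via the disintegration), then uses the continuity of $h_q$ on its support to pass to pointwise $\qq$-measurability of $q \mapsto h_q(\r(q,t))$ for each fixed $t$, and finally upgrades to joint $\qq\otimes\L^1$-measurability by expressing the super-level sets as countable intersections and unions of product sets (again exploiting continuity). Your approach instead pushes $\mm\llcorner_{\T^b}$ forward through $\r^{-1}$, verifies absolute continuity with respect to $\qq\otimes\L^1$, and lets the Radon--Nikodym theorem hand you a jointly measurable density, which you then identify with $h_q(\r(q,\cdot))$ via the disintegration and continuity. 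This is more conceptual and avoids the paper's somewhat delicate countable-union-of-products argument; the continuity of $h_q$ is used only at the very end, and only fibrewise, to make the identification. The paper's route has the minor advantage of being entirely elementary (no Radon--Nikodym), but yours is cleaner.

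One remark on (3): you observe at the outset that $\r$ is a Borel \emph{isomorphism} with explicit inverse $x \mapsto (\QQ(x), u(\QQ(x))-u(x))$, yet in (3) you still appeal to analyticity and universal measurability of $\r(A)$. Since $\r^{-1}$ is Borel, $\r(A) = (\r^{-1})^{-1}(A)$ is already Borel whenever $A$ is, so the detour through analytic sets is unnecessary in your setup. The paper, by contrast, does not record that $\r^{-1}$ is Borel and genuinely needs the analytic-set argument; your observation is a simplification you could exploit more fully.
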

\begin{proof}
\begin{enumerate}
\item Note that for $\qq$-a.e. $q \in \Q$, we have:
\[
\Haus^1\llcorner_{X_q}(B) = \Haus^1\llcorner_{\relint X_q}(B) = \int_{\text{Dom}(\r(q,\cdot))} 1_B(\r(q,t)) \L^1(dt) .
\]
Since $\text{Dom}(\r) \ni (q,t) \mapsto 1_B(\r(q,t))$ is a Borel function (as $\r$ and $B$ are Borel), and since $\text{Dom}(\r)$ is Borel, the first assertion follows.

\item It will be convenient to extend the definition of $h_q(r(q,t))$ to the entire $\Q \times \R$ by setting $H(q,t) := h_q(r(q,t)) 1_{\text{Dom}(\r)}(q,t)$. 
Given a compact interval $I \subset \R$, note that for $\qq$-a.e. $q$:
\[
\int_{I} H(q,\tau) \L^1(d\tau) = \mm_q(B_I) ~,~ B_I := \set{x \in \T^b \; ; \; u(\QQ(x)) - u(x) \in I } .
\]
Since $\QQ, u, \T^b, \text{Dom}(\r)$ are Borel, it follows that $B_I$ is Borel as well, as so by the measurability property of the disintegration (\ref{eq:disintegration}) we deduce that:
\begin{equation} \label{eq:meas1}
\Q \ni q \mapsto \int_{I} H(q,\tau) \L^1(d\tau) \text{ is $\qq$-measurable}. 
\end{equation}
 Note that for $\qq$-a.e. $q$, $\supp H(q,\cdot)$ coincides with the closure of $\text{Dom}(\r)(q,\cdot)$. 
  Since in addition, $\tau \mapsto H(q,\tau)$ is continuous on its support for $\qq$-a.e. $q$, by applying (\ref{eq:meas1}) to $I_\eps = [t-\eps,t+\eps]$ for a fixed $t \in \R$ and $\eps > 0$, dividing by $\L^1(I_\eps \cap \text{Dom}(\r)(q,\cdot))$ and taking the limit as $\eps \rightarrow 0$ (assuming the denominator is positive for all $\eps > 0$), it follows that for all $t \in \R$, the function:
\[
\Q \ni q \mapsto H(q,t)  \text{ is $\qq$-measurable}
\]
(we have used the standard fact that the pointwise limit of measurable functions is measurable, e.g. \cite[Proposition 3.1.27]{Srivastava-Book}). 

Now given $s > 0$, note that that continuity of $\tau \mapsto H(q,\tau)$ on its support for $\qq$-a.e. $q$ implies that up to a $\qq \otimes \L^1$ null-set:
\begin{multline*}
\{(q,t) \in \Q \times \R \; ; \; H(q,t) \geq s\} =\\
 \bigcap_{n \in \mathbb{N} , n > 1/s} \;\; \bigcup_{\tau \in \mathbb{Q}} \set{ q \in \Q \; ; \; H(q,\tau) \geq s-\frac{1}{n} } \times \set{ t \in \R \; ; \; |t - \tau| \leq \frac{1}{n} } 
\end{multline*}
(compare with the proof of \cite[Theorem 3.1.30]{Srivastava-Book}).
Since each of the product sets on the right-hand side is $\qq \otimes \L^1$ measurable, it follows that so is the left-hand side, concluding the proof of the second assertion. 

\item Note that the disintegration formula (\ref{eq:disintegration}) and the fact that $\mm_q \ll \Haus^1\llcorner_{X_q}$ for $\qq$-a.e. $q \in \Q$ together imply that if $D \subset \text{Dom}(\r)$ is such that $\qq \otimes \L^1 (D) = 0$ then:
\[
\mm(\r(D)) = \int \mm_q(\r(D)) \qq(dq) = \int_{D}  h_q(\r(q,t)) \qq\otimes\L^1(dq \; dt) = 0 .
\]
In particular, we see that $\cup_{q \in \Q} (X_q \setminus \relint X_q)$ and $\QQ^{-1}(Q_0)$ for any $\qq$-null set $Q_0$ are ($\mm$-measurable) $\mm$-null sets. It follows that $\mm$-a.e. we have:
\[
\rho_i(x) = \frac{1}{\qq(\Q^k_{a_0,a_1,\eps_0,\eps_1})} \frac{1}{2 \eps_i} 1_{\set{ |u(x) - a_i| \leq \eps_i}} \frac{1}{h_{\QQ(x)}(x)} 1_{\QQ^{-1}(\B^k_{B,a_0,a_1,\eps_0,\eps_1})}(x) , 
\]
where $\B^k_{a_0,a_1,\eps_0,\eps_1} \subset \Q$ is a Borel set which coincides with $\Q^k_{a_0,a_1,\eps_0,\eps_1}$ up to a $\qq$-null set. 
Since $\QQ : \T^b \rightarrow \Q$ is Borel and $u$ is Lipschitz, this reduces the task of establishing that $\rho_i$ is $\mm$-measurable to showing that $h_{\QQ(x)}(x)$ is $\mm$-measurable. Given $s  > 0$, the second assertion of the Lemma ensures that $\{(q,t) \in \Q \times \R \; ; \; h_q(\r(q,t)) \geq s \}$ is $\qq \otimes \L^1$ measurable, and hence may be written as $D_0 \triangle D$ where $D_0$ is a $\qq \otimes \L^1$-null set, and $D$ is a Borel subset of $\Q \times \R$. Since $\mm(\r(D_0)) = 0$, it follows that up to an $\mm$-null set:
\[
\set{ x \in \T^b \; ; \; h_{Q(x)}(x) \geq s } = \r \set{ (q,t) \in \Q \times \R \; ; \; h_q(\r(q,t)) \geq s } = \r(D)  .
\]
Since $D$ and $\r$ are Borel, $\r(D)$ is analytic (see \cite[Theorem 4.5.2]{Srivastava-Book}) 
and hence $\mm$-measurable \cite[Section 4.3]{Srivastava-Book}, thereby concluding the proof of the third assertion. 
\end{enumerate}
\end{proof}

\begin{remark} \label{rem:uniqueness}
Using essential uniqueness of disintegration, it is possible to avoid establishing the last two assertions of Lemma \ref{lem:measurability} directly, and argue in the proof of Theorem \ref{thm:gen-loc} as follows. First, note that $\mu_i \ll \mm$, since if $\mm(B) = 0$ then by the disintegration (\ref{eq:disintegration}) if follows that $\mm_q(B) = 0$ for $\qq$-a.e. $q$, and since $\mm_q$ and $\Haus^1\llcorner_{X_q}$ are mutually absolutely continuous for $\qq$-a.e. $q$, it follows that $\mu_i(B) = 0$ directly from the definition (\ref{eq:mui}). Using the disintegration (\ref{eq:disintegration}) again, we write:
\begin{equation} \label{eq:mui2}
 \mu_i = \frac{d\mu_i}{d\mm} \mm = \int_{\Q} \frac{d\mu_i}{d\mm} \mm_q \qq(dq) = \int_{\Q} \frac{d\mu_i}{d\mm} h_q \Haus^1\llcorner_{X_q} \qq(dq) .
\end{equation}
Since $X_q$ have disjoint relative interiors and $\Haus^1$ does not charge their endpoints, and since $\mu_i$ is a Borel probability measure on our Polish space, it follows by \cite[Theorem A.7]{BianchiniCaravenna} (cf. \cite[Theorem 6.18]{CavallettiEMilman-LocalToGlobal}) that the disintegration must be essentially unique, meaning that for any other disintegration:
\[
\mu_i = \int_{\Q} \tilde \mm_q \qq(dq) ,
\]
with $\tilde \mm_q$ concentrated on $\relint X_q$ for $\qq$-a.e. $q$, we must have $\tilde \mm_q =  \frac{d\mu_i}{d\mm} h_q \Haus^1\llcorner_{X_q}$ for $\qq$-a.e. $q$. 
Comparing (\ref{eq:mui2}) with the definition of $\mu_i$ from (\ref{eq:mui}), it immediately follows that $\frac{d\mu_i}{d\mm} = \rho_i$ $\Haus^1\llcorner_{X_q}$-a.e. for $\qq$-a.e. $q$, which by the disintegration (\ref{eq:disintegration}) means that $\frac{d\mu_i}{d\mm} = \rho_i$ $\mm$-a.e., and in particular estabishes the $\mm$-measurability of $\rho_i$. 
\end{remark}

\subsection{Characterization of one-dimensional case}

Before concluding this section, it is worth noting that, at least in the one-dimensional setting, Theorem \ref{thm:gen-loc} admits the following (standard) converse.

\begin{lemma} \label{lem:1D-densities}
Let $N$, $\sigma_0,\sigma_1$ be as in Theorem \ref{thm:gen-loc}, and let $h : \R \rightarrow \R_+$ be continuous on its support.
Then the one-dimensional metric-measure space $(\R, |\cdot|,  \mm = h \mathcal L^1)$ satisfies (\ref{eq:gen-loc-assumption}) if and only if $h$ satisfies:
\begin{equation} \label{eq:1D-density}
h^{\frac{1}{N-1}}((1-t)x_0 + t x_1) \geq \sigma^{(1-t)}_0(|x_1-x_0|) h^{\frac{1}{N-1}}(x_0) + \sigma^{(t)}_1(|x_1-x_0|) h^{\frac{1}{N-1}}(x_1) ,
\end{equation}
for all $x_0, x_1 \in \supp h$ and $t\in (0,1)$.
\end{lemma}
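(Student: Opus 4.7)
The plan is to prove both directions by direct one-dimensional computation, exploiting the explicit monotone structure of $L^2$-optimal transport on $\R$.

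For the \emph{if} direction, let $\mu_0,\mu_1 \in \P_c(\R)$ be absolutely continuous with respect to $\mm = h\L^1$. The optimal transport is the monotone rearrangement, realized by a map $T:\R\to\R$, and the Wasserstein geodesic is $\mu_t=(T_t)_\sharp \mu_0$ with $T_t := (1-t)\Id + tT$. Writing $\mu_i = f_i\L^1$, the Jacobian identity $f_t(T_t(x))T_t'(x) = f_0(x)$ gives
\[
\rho_t(T_t(x))^{-1/N} = \bigl(T_t'(x)\, h(T_t(x))\bigr)^{1/N} f_0(x)^{-1/N}.
\]
The point $(x, T(x))$ is inserted into hypothesis (\ref{eq:1D-density}), which controls $h(T_t(x))^{1/(N-1)}$ from below. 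Noting that $T_t'(x) = (1-t)\cdot 1 + t\cdot T'(x)$, the desired inequality (\ref{eq:gen-loc-assumption}) then reduces to the elementary Hölder inequality
\[
\alpha^{1/N} A^{(N-1)/N} + \beta^{1/N} B^{(N-1)/N} \leq (\alpha+\beta)^{1/N}(A+B)^{(N-1)/N}, \qquad \alpha,\beta,A,B\geq 0,
\]
applied with $\alpha=1-t$, $\beta=tT'(x)$, $A=\sigma_0^{(1-t)}(|T(x)-x|)\,h(x)^{1/(N-1)}$, $B=\sigma_1^{(t)}(|T(x)-x|)\,h(T(x))^{1/(N-1)}$.

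For the \emph{only if} direction, fix $x_0<x_1$ in $\intt\supp h$ and $t\in(0,1)$; boundary points of $\supp h$ will be handled at the end by continuity. Note that (\ref{eq:gen-loc-assumption}) forces the space to be Monge, so $\supp h$ is necessarily a closed interval and $h>0$ on its interior. Test the assumption on the family $\mu_i^\eps := (2\eps_i)^{-1}\L^1\llcorner_{[x_i-\eps_i,x_i+\eps_i]}$ for small $\eps_0,\eps_1>0$ to be chosen later. Monotonicity makes everything explicit: the transport map is the affine bijection $T(y)=x_1+(\eps_1/\eps_0)(y-x_0)$, the interpolant is supported on $[x_t-\eps_t,x_t+\eps_t]$ with $\eps_t := (1-t)\eps_0+t\eps_1$, and the densities w.r.t.\ $\mm$ equal $\rho_i^\eps = (2\eps_i h)^{-1}$ on their supports. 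Substituting these into (\ref{eq:gen-loc-assumption}), using continuity of $h$ and of $\sigma_i^{(\cdot)}(\cdot)$ to pass from the a.e.\ $\gamma_0$-statement to the specific choice $\gamma_0=x_0$, and dividing through by $(2\eps_t)^{1/N}$ yields
\[
h(x_t)^{1/N} \geq \alpha_0^{1/N} \sigma_0^{(1-t)}(|x_1-x_0|)^{(N-1)/N} h(x_0)^{1/N} + \alpha_1^{1/N} \sigma_1^{(t)}(|x_1-x_0|)^{(N-1)/N} h(x_1)^{1/N},
\]
where $\alpha_0 := (1-t)\eps_0/\eps_t$ and $\alpha_1 := t\eps_1/\eps_t$ satisfy $\alpha_0+\alpha_1=1$. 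Finally, one chooses the ratio $\eps_0/\eps_1$ so as to saturate Hölder, i.e.\ $\alpha_0/\alpha_1 = X/Y$ with $X,Y$ as above; this collapses the right-hand side to $(X+Y)^{(N-1)/N}$, and raising both sides to the power $N/(N-1)$ recovers exactly (\ref{eq:1D-density}).

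The main subtlety is the asymmetric choice of the approximating supports in the \emph{only if} direction: a naive test with $\eps_0=\eps_1$ would yield a strictly weaker inequality with a Hölder-loss prefactor; the precise ratio saturating Hölder depends on $h(x_0),h(x_1)$ and the $\sigma_i$-coefficients, and exploiting this freedom is what makes the argument tight. The \emph{if} direction, by contrast, is essentially the direct application of Hölder in the opposite direction, combined with the standard one-dimensional optimal transport machinery.
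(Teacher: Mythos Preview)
Your proposal is correct and follows essentially the same approach as the paper. For the ``if'' direction, both you and the paper use the monotone one-dimensional transport, the change-of-variables formula for the Jacobian $J_t = (1-t) + t J_1$, and H\"older's inequality to split the product $J_t^{1/N} h(x_t)^{1/N}$; for the ``only if'' direction, both test the interpolation assumption against uniform measures on $[x_i-\eps_i,x_i+\eps_i]$, use continuity of $h$ and the $\sigma_i$ to pass to the center points, and then optimize the ratio $\eps_0/\eps_1$ so as to saturate the H\"older inequality and recover (\ref{eq:1D-density}) --- exactly the computation appearing in the proof of Theorem~\ref{thm:gen-loc} (equations (\ref{eq:gen-loc-key})--(\ref{eq:gen-loc-key2})), to which the paper's proof of the lemma simply refers.
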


\begin{proof}
The ``only if" direction follows immediately from the proof of Theorem \ref{thm:gen-loc} (after localization to dimension one, the $\MCP(K',N')$ assumption was only used there to guarantee that the density $h$ is continuous on its support). The ``if" direction is standard, but for completeness, we sketch the proof. Let $\rho_0,\rho_1 : \supp h \rightarrow \R_+$ be two probability densities w.r.t. $\mm$ so that $\mu_0 := \rho_0 \mm$ and $\mu_1 := \rho_1 \mm$ are in $\P_c(\R)$. The $W_2$ optimal transport between $\mu_0$ and $\mu_1$ is obtained by a monotone map $T_1 : \supp h \rightarrow \supp h$, and by the change-of-variables formula, we have $J_1(x_0) := T'_1(x_0) = \frac{\rho_0(x_0) h(x_0)}{\rho_1(x_1) h(x_1)}$ for $\mu_0$-a.e. $x_0$, where we denote $x_1 := T_1(x_0)$. The $W_2$ geodesic $\mu_t := \rho_t \mm$ is obtained by pushing forward $\mu_0$ via $T_t(x) = (1-t) x + t T_1(x)$, and so by the change-of-variables formula, we have for each $t \in [0,1]$ that for $\mu_0$-a.e. $x_0$:
\[
J_t(x_0) := (1-t) + t J_1(x_0) = \frac{\rho_0(x_0) h(x_0)}{\rho_t(x_t) h(x_t)} ,
\]
where $x_t := T_t(x_0) = (1-t) x_0 + t x_1$. Abbreviating $C^{-1}_{x_0} := \rho_0(x_0) h(x_0)$, it follows that for $\mu_0$-a.e. $x_0$, by (\ref{eq:1D-density}) and H\"{o}lder's inequality:
\begin{align*}
& (C_{x_0} \rho_t(x_t))^{-\frac{1}{N}} = J_t^{\frac{1}{N}}(x_0) h^{\frac{1}{N}}(x_t) \\
& \geq \brac{(1-t) J_0(x_0) + t J_1(x_0)}^{\frac{1}{N}}  \brac{\sigma^{(1-t)}_0 (|x_1-x_0|) h^{\frac{1}{N-1}}(x_0) + \sigma^{(t)}_1 (|x_1-x_0|) h^{\frac{1}{N-1}}(x_1)}^{\frac{N-1}{N}} \\
& \geq  \brac{(1-t) J_0(x_0)}^{\frac{1}{N}} \brac{\sigma^{(1-t)}_{0} (|x_1-x_0|) h^{\frac{1}{N-1}}(x_0)}^{\frac{N-1}{N}} + (t J_1(x_0))^{\frac{1}{N}} \brac{ \sigma^{(t)}_{1} (|x_1-x_0|) h^{\frac{1}{N-1}}(x_1) }^{\frac{N-1}{N}}  \\
& = (1-t)^{\frac{1}{N}} \sigma^{(1-t)}_{0} (|x_1-x_0|)^{\frac{N-1}{N}} (C_{x_0} \rho_0(x_0))^{-\frac{1}{N}} + t^{\frac{1}{N}} \sigma^{(t)}_{1} (|x_1-x_0|) (C_{x_0} \rho_1(x_1))^{-\frac{1}{N}} ,
\end{align*}
establishing (\ref{eq:gen-loc-assumption}).
\end{proof}

\begin{remark} \label{rem:continuity-not-needed}
By employing Lebesgue's differentiation theorem and allowing to modify $h$ on a null-set, one may show (e.g. as in \cite[Lemma 3.3.10]{CalderonThesis}) that Lemma \ref{lem:1D-densities} remains valid for general $h \in L^1_{loc}(\R)$, without requiring continuity. We refrain from this generality here, as it will not be needed. 
\end{remark}

\section{One-dimensional $\QCD$ densities} \label{sec:1D-QCD}

\begin{definition}[One-dimensional $\QCD$ density]
Let $K \in \R$, $N \in (1,\infty)$ and $Q \geq 1$. 
We say that a function $h : \R \rightarrow \R_+$ which is continuous on its support is a $\QCD(Q,K,N)$ density if:
\begin{equation}\label{eq:qcd}
h^{\frac{1}{N-1}}(tx_1+(1-t)x_0) \geq \frac{1}{Q^{\frac{1}{N-1}}} \brac{\sigma^{(1-t)}_{K, N-1} (|x_1-x_0|) h^{\frac{1}{N-1}}(x_0) + \sigma^{(t)}_{K, N-1} (|x_1-x_0|) h^{\frac{1}{N-1}}(x_1)} ,
\end{equation}
for all $x_0, x_1 \in \supp h$ and $t\in (0,1)$. 
\end{definition}

\begin{remark}
Clearly, the support of a $\QCD$ density $h$ is always an interval and $h$ is strictly positive in its interior. Note that a function $h$ satisfying (\ref{eq:qcd}) with $Q > 1$ may in general be discontinuous at every point of its support, and hence we in addition require continuity above. 
\end{remark}
\begin{remark}
When $Q=1$, $h$ as above is said to be a $\CD(K,N)$ density. In this case, there is no need to \emph{a-priori} assume that $h$ is continuous on its support; any $h : \R \rightarrow \R_+$ satisfying (\ref{eq:qcd}) with $Q=1$ is automatically lower semi-continuous on its support and continuous in its interior (see e.g. \cite[Appendix A]{CavallettiEMilman-LocalToGlobal}), and so up to modifying the value of $h$ at the end-points, such an $h$ is already continuous. 
\end{remark}

Applying Lemma \ref{lem:1D-densities} with $\sigma_i^{(t)}(\theta) = \frac{1}{Q^{\frac{1}{N-1}}} \sigma^{(t)}_{K,N-1}$, we immediately obtain:
\begin{corollary} \label{cor:QCD-density}
Given $K \in \R$, $N \in (1,\infty)$, $Q \geq 1$ and a function $h : \R \rightarrow \R_+$ which is continuous on its support,
the one-dimensional metric-measure space $(\R, |\cdot|,  h \mathcal L^1)$ satisfies $\QCD(Q,K,N)$ if and only if $h$ is a $\QCD(Q,K,N)$ density. 
\end{corollary}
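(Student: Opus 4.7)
The plan is to directly invoke Lemma \ref{lem:1D-densities} with the correct identification of the interpolation coefficients. Specifically, set
\[
\sigma_0^{(t)}(\theta) = \sigma_1^{(t)}(\theta) := \frac{\sigma^{(t)}_{K,N-1}(\theta)}{Q^{\frac{1}{N-1}}} \;\;\; \forall (t,\theta) \in (0,1) \times \R_+ .
\]
These are continuous in each variable (with values in $[0,+\infty]$) because $\sigma^{(t)}_{K,N-1}$ is, so the hypothesis on $\sigma_0, \sigma_1$ in Theorem \ref{thm:gen-loc} is satisfied and Lemma \ref{lem:1D-densities} may be applied.

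The key algebraic observation is that, by the very definition of $\tau^{(t)}_{K,N}(\theta) = t^{1/N} \sigma^{(t)}_{K,N-1}(\theta)^{(N-1)/N}$, one has
\[
\frac{\tau^{(t)}_{K,N}(\theta)}{Q^{\frac{1}{N}}} = t^{\frac{1}{N}} \left( \frac{\sigma^{(t)}_{K,N-1}(\theta)}{Q^{\frac{1}{N-1}}} \right)^{\frac{N-1}{N}} = t^{\frac{1}{N}} \sigma_1^{(t)}(\theta)^{\frac{N-1}{N}} ,
\]
and similarly for the $(1-t)$ coefficient, since $\frac{N-1}{N} \cdot \frac{1}{N-1} = \frac{1}{N}$. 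Multiplying the $\QCD(Q,K,N)$ inequality (\ref{eq:def-QCD}) by the common factor $1$ and reading off term by term, this shows that the $\QCD(Q,K,N)$ condition for a Monge space $(X,\d,\mm)$ is \emph{exactly} the hypothesis (\ref{eq:gen-loc-assumption}) of Theorem \ref{thm:gen-loc} for the above $\sigma_0, \sigma_1$ and the same $N$. In particular, the two properties are equivalent for $(\R,|\cdot|, h \L^1)$.

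Next I would note that for $h$ continuous on its support (which is then an interval), the metric-measure space $(\R, |\cdot|, h \L^1)$ is automatically Monge: optimal $W_2$-transport on the real line is given by the unique monotone rearrangement, which is induced by a map and preserves absolute continuity along the $W_2$ geodesic. Thus $\QCD(Q,K,N)$ is well-defined for this space and coincides, via the identification above, with condition (\ref{eq:gen-loc-assumption}). Applying the ``if and only if'' of Lemma \ref{lem:1D-densities} then translates this into the condition
\[
h^{\frac{1}{N-1}}((1-t)x_0 + tx_1) \geq \sigma_0^{(1-t)}(|x_1-x_0|) h^{\frac{1}{N-1}}(x_0) + \sigma_1^{(t)}(|x_1-x_0|) h^{\frac{1}{N-1}}(x_1)
\]
for all $x_0,x_1 \in \supp h$ and $t \in (0,1)$, which upon plugging in the choice of $\sigma_0, \sigma_1$ is precisely the defining inequality (\ref{eq:qcd}) for $h$ to be a $\QCD(Q,K,N)$ density.

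No serious obstacle is expected: the only things to be careful about are the bookkeeping of the exponents $\frac{1}{N}$ versus $\frac{1}{N-1}$ in the normalization by $Q$, and the verification that the Monge property and the continuity-on-support hypothesis of Lemma \ref{lem:1D-densities} are in force. Everything else is a direct substitution into the already-proved Lemma \ref{lem:1D-densities}.
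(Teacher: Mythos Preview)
Your proposal is correct and follows exactly the same approach as the paper: apply Lemma~\ref{lem:1D-densities} with the choice $\sigma_i^{(t)}(\theta) = Q^{-\frac{1}{N-1}}\sigma^{(t)}_{K,N-1}(\theta)$. You simply spell out in more detail the exponent bookkeeping and the Monge property of the line that the paper leaves implicit.
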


Note that when $K > 0$, $\R_+ \ni \theta \mapsto \sigma^{(t)}_{K,N-1}(\theta)$ is not continuous for $t=0,1$, as it jumps from $0,1$ (respectively) to $+\infty$ at $\theta = D_{K,N}$. However, the values $t=0,1$ were (deliberately) excluded from consideration in all of the statements of the previous section, and so Lemma \ref{lem:1D-densities} applies. 

\medskip
For later use, we introduce the following one-dimensional members of the family $\QCD_{reg}(Q,K,N)$ defined in Section \ref{sec:results}:
\begin{definition}[$\QCD_1(Q,K,N)$]
We denote by $\QCD_1(Q,K,N)$ the one-dimensional metric-measure spaces $(\R,\abs{\cdot},h \L^1)$ satisfying $\QCD(Q,K,N)$ and $\MCP(K',N')$ for some $K' \in \R$ and $N' \in (1,\infty)$. 
\end{definition}

As usual, note that when $Q=1$, $\QCD_1(1,K,N)$ coincides with $\CD_1(K,N)$, defined in Section \ref{sec:results}. 
We can now remove the continuity assumption in Corollary \ref{cor:QCD-density} (without invoking Remark \ref{rem:continuity-not-needed}), and obtain it as part of the conclusion: 
\begin{corollary} \label{cor:QCD-density-2}
Given $K \in \R$, $N \in (1,\infty)$, $Q \geq 1$ and $h \in L^1_{loc}(\R)$,  $(\R, |\cdot|,  h \mathcal L^1) \in \QCD_1(Q,K,N)$ if and only if (up to modification on a null-set) $h$ is both a $\QCD(Q,K,N)$ and  $\MCP(K',N')$ density, for some $K' \in \R$ and $N' \in (1,\infty)$. 
\end{corollary}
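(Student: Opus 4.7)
The plan is to combine the already-established equivalences for $\MCP$ densities (Lemmas \ref{lem:MCP-density-1} and \ref{lem:MCP-density-2}) with the $\QCD$ equivalence for continuous densities (Corollary \ref{cor:QCD-density}), using the fact that modification on a Lebesgue-null set does not alter the metric-measure space. The point of introducing the $\MCP(K',N')$ hypothesis in the definition of $\QCD_1(Q,K,N)$ is precisely to supply a canonical continuous representative for $h$, which is exactly what Corollary \ref{cor:QCD-density} requires in order to upgrade a measure-theoretic statement to a pointwise one.

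For the ``only if'' direction, I would start from $(\R,|\cdot|,h\L^1)\in\QCD_1(Q,K,N)$. The $\MCP(K',N')$ assumption together with Lemma \ref{lem:MCP-density-1} lets me replace $h$ by a representative (still called $h$) that is an $\MCP(K',N')$ density; Lemma \ref{lem:MCP-density-2} then says this representative can be adjusted at the (at most two) endpoints of $\supp h$ so as to be continuous on its entire support, all the modifications being on a Lebesgue-null set. Since the measure $h\L^1$ is unaffected, the metric-measure space is the same and still satisfies $\QCD(Q,K,N)$. Applying Corollary \ref{cor:QCD-density} to this continuous representative shows it is also a $\QCD(Q,K,N)$ density pointwise, finishing this direction.

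For the ``if'' direction, suppose $h$ is, up to a null-set modification, simultaneously a $\QCD(Q,K,N)$ density and an $\MCP(K',N')$ density. Working with this common representative: Lemma \ref{lem:MCP-density-1} yields $\MCP(K',N')$ for the space; Lemma \ref{lem:MCP-density-2} gives that the representative is continuous on its support; and then Corollary \ref{cor:QCD-density} upgrades the $\QCD(Q,K,N)$ density property into the synthetic $\QCD(Q,K,N)$ condition for $(\R,|\cdot|,h\L^1)$. Combining yields membership in $\QCD_1(Q,K,N)$.

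I do not foresee any real obstacle: the whole argument is a bookkeeping exercise in choosing the right representative so that ``continuous on support'' can be assumed when invoking Corollary \ref{cor:QCD-density}. The only delicate point to state carefully is that one must pick a \emph{single} representative that witnesses both the $\QCD$ and $\MCP$ density inequalities simultaneously, but this is automatic: start with any representative making $h$ an $\MCP$ density, use Lemma \ref{lem:MCP-density-2} to adjust it to be continuous on $\supp h$, and observe that both the $\MCP$ density inequality and (by Corollary \ref{cor:QCD-density}, deduced from the unchanged metric-measure space) the $\QCD$ density inequality hold for this same representative.
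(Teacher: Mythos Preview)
Your proposal is correct and follows essentially the same approach as the paper's proof: for the ``only if'' direction, use the $\MCP(K',N')$ assumption together with Lemmas \ref{lem:MCP-density-1} and \ref{lem:MCP-density-2} to obtain a continuous representative, then apply Corollary \ref{cor:QCD-density}; for the ``if'' direction, combine the ``if'' parts of Lemma \ref{lem:MCP-density-1} and Corollary \ref{cor:QCD-density}. Your invocation of Lemma \ref{lem:MCP-density-2} in the ``if'' direction is harmless but actually unnecessary, since a $\QCD(Q,K,N)$ density is by definition already continuous on its support.
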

\begin{proof}
The ``if" direction follows from the ``if" directions of Corollary \ref{cor:QCD-density} and Lemma \ref{lem:MCP-density-1}. The ``only if" direction follows by first using the $\MCP(K',N')$ property of the space to invoke Lemmas \ref{lem:MCP-density-1} and \ref{lem:MCP-density-2} and conclude that up to modification on a null-set, $h$ is continuous on its support, and then applying the ``only if" direction of Corollary \ref{cor:QCD-density}. 
\end{proof}

\subsection{One-dimensional $\QCD$ and $\CD$ densities are equivalent}

By Theorem \ref{thm:gen-loc} and Corollary \ref{cor:QCD-density}, we can already reduce the study of any property of $\QCD$ spaces which is amenable to localization to the one-dimensional case. To treat the one-dimensional case, our second main observation in this work is as follows:

\begin{proposition}[One-dimensional $\QCD$ and $\CD$ densities are equivalent] \label{prop:QCD-CD}
$h$ is a $\QCD(Q,K,N)$ density iff there exists a $\CD(K,N)$ density $f$ so that:
\[
h \leq f \leq Q h .
\]
\end{proposition}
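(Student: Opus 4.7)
Assume $f$ is a $\CD(K,N)$ density with $h \leq f \leq Qh$, and set $H := h^{\frac{1}{N-1}}$, $F := f^{\frac{1}{N-1}}$, so $H \leq F \leq Q^{\frac{1}{N-1}} H$. Inserting the lower bound $F \geq H$ on the right-hand side and the upper bound $F \leq Q^{\frac{1}{N-1}} H$ on the left-hand side of the $\sigma_{K,N-1}$-concavity inequality for $F$, and dividing by $Q^{\frac{1}{N-1}}$, one recovers the $\QCD(Q,K,N)$ inequality (\ref{eq:qcd}) for $h$.

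\textbf{``Only if'' direction.} For the converse, given a $\QCD(Q,K,N)$ density $h$, I would construct $f := F^{N-1}$ where $F$ is the \emph{two-point $\sigma_{K,N-1}$-envelope} of $H := h^{\frac{1}{N-1}}$ on $I := \supp h$:
\[
F(x) := \sup\Big\{\, \sigma^{(1-t)}_{K,N-1}(|x_1-x_0|)\, H(x_0) + \sigma^{(t)}_{K,N-1}(|x_1-x_0|)\, H(x_1) \,:\, x_0, x_1 \in I,\; x = (1-t) x_0 + t x_1 \,\Big\}.
\]
The sandwich $H \leq F \leq Q^{\frac{1}{N-1}} H$ is transparent: the lower bound follows from the degenerate choice $x_0 = x_1 = x$ (using $\sigma^{(1)}_{K,N-1}(0) = 1$ and $\sigma^{(0)}_{K,N-1}(0) = 0$), while the upper bound is just the $\QCD$ inequality (\ref{eq:qcd}) applied to each candidate triple $(x_0, x_1, t)$. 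Continuity of $F$ on $I$ follows from continuity of $H$ together with the joint continuity of $\sigma^{(t)}_{K,N-1}$, after noting that the $\QCD$ inequality forces $\diam I \leq D_{K,N}$ when $K > 0$ (via blow-up of $\sigma^{(t)}$ at $\theta = D_{K,N}$), so one stays safely within the continuity regime of the coefficients.

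\textbf{Main obstacle: $\sigma_{K,N-1}$-concavity of $F$.} The essential step is verifying that $F$ is itself $\sigma_{K,N-1}$-concave on $I$, so that Corollary~\ref{cor:QCD-density} identifies $f = F^{N-1}$ as a bona fide $\CD(K,N)$ density. In the classical case $K=0$ (where $\sigma^{(t)}_{K,N-1}(\theta) = t$), $F$ is simply the standard upper concave envelope of $H$ on an interval, whose concavity is a one-dimensional Carath\'eodory statement: every convex combination of finitely many real numbers collapses to a 2-point combination, so the ``4-point composition'' at a point $y_s$ a-priori threatening $\sigma$-concavity of $F$ (arising from interpolating between $F(y_0)$ and $F(y_1)$ and expanding each $F(y_i)$ as a 2-point subchord of $H$) collapses to a single 2-point subchord at $y_s$. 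For general $K$, I would reproduce this Carath\'eodory-style reduction, the relevant composition identity for two nested 2-point $\sigma_{K,N-1}$-interpolations tracing back to the addition formula for the (trigonometric or hyperbolic) sine $s_{K/(N-1)}$ out of which the coefficients $\sigma^{(t)}_{K,N-1}$ are built. Equivalently -- and this is the route I would favor for conceptual cleanliness -- one verifies that $F$ coincides with the pointwise infimum over all ``Jacobi chords'' (solutions of $J'' + \frac{K}{N-1} J = 0$ on subintervals of $I$, i.e., $\sigma_{K,N-1}$-affine functions) that dominate $H$ on their defining interval. Since the class of $\sigma_{K,N-1}$-concave functions is closed under pointwise infima (via $\inf_\alpha[a_\alpha + b_\alpha] \geq \inf_\alpha a_\alpha + \inf_\alpha b_\alpha$ applied to the defining inequality), such an infimum is automatically $\sigma_{K,N-1}$-concave. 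This is the only non-routine step; once it is established, $f := F^{N-1}$ is the desired $\CD(K,N)$ density.
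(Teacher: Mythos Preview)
Your route is the paper's route: both construct the two-point $\sigma$-envelope $\ubar f := F^{N-1}$ and the infimum $\bar f$ over all $\CD(K,N)$ model densities (your ``Jacobi chords'') dominating $h$ on $\supp h$, note that $\bar f$ is automatically a $\CD(K,N)$ density by closure under pointwise infima, that $\bar f \geq \ubar f$ easily (any model $f_m \geq h$ satisfies the $\sigma$-affine identity, hence $f_m(x) \geq$ every candidate in the sup defining $\ubar f(x)$), and that $\ubar f \leq Qh$ by the $\QCD$ inequality. Everything then hinges on the inequality $\bar f \leq \ubar f$ on $\intt\supp h$, i.e.\ on the coincidence you record as ``one verifies that $F$ coincides with the pointwise infimum\ldots''.

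You correctly flag this as the only non-routine step, but you do not supply an argument, and it is precisely where the content lies. The paper's mechanism is a connectedness argument in a one-parameter family: fix $x \in \intt\supp h$ and parametrize the model densities $f_m$ (of maximal support) with $f_m(x) = \ubar f(x)$ by their slope $s := f_m'(x) \in \R$. If no such $f_m$ dominates $h$, then each one fails either only to the left of $x$ (condition $L$) or only to the right ($R$) --- never both, since $f_m(x_0) < h(x_0)$ and $f_m(x_1) < h(x_1)$ with $x_0 < x < x_1$ would give, via the $\sigma$-affine identity for $f_m$, that $f_m(x) < \ubar f(x)$. The conditions $L$ and $R$ are open in $s$ (continuous dependence of the ODE solution) and partition $\R$, so by connectedness one of them is all of $\R$; but sending $s \to +\infty$ drives $f_m$ to $0$ on the left (forcing $L$) and $s \to -\infty$ drives it to $0$ on the right (forcing $R$), so both occur --- contradiction. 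Hence a model $f_m \geq h$ with $f_m(x) = \ubar f(x)$ exists, yielding $\bar f(x) \leq \ubar f(x)$.

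Your alternative ``Carath\'eodory via sine addition'' sketch is vaguer and, as stated, does not obviously go through: the chords realizing $F(y_0)$ and $F(y_1)$ need not share endpoints or nest in any controlled way, and the addition formula alone does not collapse an arbitrary 4-point $\sigma$-combination of $H$-values to a single 2-point chord bounding $F(y_s)$. What makes this reduction work is precisely the existence, at each point, of a single Jacobi majorant tangent to the envelope --- which is what the connectedness argument above delivers.
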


Contrary to the results of the previous section, Proposition \ref{prop:QCD-CD} is rather particular to the functions $\sigma_i^{(t)} = \sigma^{(t)}_{K,N-1}$. The reason is that $\sigma(t) = \sigma^{(t)}_{K,N-1}(\theta)$ (for $\theta < D_{K,N}$) satisfies the following second-order ODE:
\begin{equation} \label{eq:sigma-ODE}
\sigma''(t) + \theta^2 \frac{K}{N-1} \sigma(t) = 0 \;\; \text{on $t \in [0,1]$} \; , \; \sigma(0) = 0 \; , \; \sigma(1) = 1 . 
\end{equation}
Consequently, we will construct $f$ above as a ``$\CD(K,N)$ upper envelope" of $h$. For the proof, we will require the following:

\begin{definition}[$\CD(K,N)$ model density]
A function $f_m : \R \rightarrow \R_+$ which is smooth on its support and satisfies:
\begin{equation} \label{eq:CDKN-model}
(f_m^{\frac{1}{N-1}})'' (t) + \frac{K}{N-1} f_m^{\frac{1}{N-1}}(t) = 0 \text{ on $\supp f_m$ } 
\end{equation}
is called a $\CD(K,N)$ model density. 
\end{definition}
Using (\ref{eq:sigma-ODE}), one immediately verifies that a $\CD(K,N)$ model density is a $\CD(K,N)$ density which satisfies (\ref{eq:qcd}) with equality (and $Q=1$). Note that the maximal interval $I_{f_m}$ on which a solution to (\ref{eq:CDKN-model}) exists and coincides with $f_m$ on $\supp f_m$ is of diameter $D_{K,N}$, and hence $\diam(\supp f_m) \leq D_{K,N}$. We will say that $f_m$ is of maximal support if $\supp f_m = I_{f_m}$; note that in that case, $f_m$ is continuous on the entire $\R$. For more on the well-known differential characterization of $\CD(K,N)$ densities as satisfying (\ref{eq:CDKN-model}) with $\leq 0$ instead of $=0$ (in the sense of distributions) we refer to \cite[Appendix A]{CavallettiEMilman-LocalToGlobal}.

Note that contrary to $\CD(K,N)$ densities, $\QCD(Q,K,N)$ densities do not and cannot satisfy any differential characterization whenever $Q > 1$. To see this, take any $\CD(K,N)$ density $f$ supported on an interval $I$ of positive length, and multiply it by any continuous function $p$ which oscillates on $I$ between the values of $1$ and $Q$; the resulting density $h = f p$ is a $\QCD(Q,K,N)$ density by (the trivial direction of) Proposition \ref{prop:QCD-CD}. Obviously, by making $p$ oscillate as violently as one desires, no differential characterization of $\QCD(Q,K,N)$ densities is possible, and furthermore, it is possible to arrange so that $h$ does not satisfy any $\CD(K',N')$ condition for any $K' \in \R$ and $N' \in (1,\infty)$. A concrete example of a density which satisfies $\QCD(2,0,2)$ but not $\CD(K',N')$ for any $K' \in \R$ and $N' \in (1,\infty)$ is given e.g. by $h(x) = 1 + |x|$ on the interval $[-1,1]$; the latter is easily seen after noting that the distributional second derivative of $h$ on $[-1,1]$ is the delta-measure $2 \delta_{0}$.

\begin{proof}[Proof of Proposition \ref{prop:QCD-CD}]
The ``if" direction is trivial by using that $f$ is a $\CD(K,N)$ density and passing from $f$ to $h$ using $h \leq f \leq Q h$. For the ``only if" direction, 
let $h$ be a $\QCD(Q,K,N)$ density. Its support is a closed interval, and we may assume it is non-empty (and thus of positive length), otherwise there is nothing to prove. Define:
\[
\bar f := \inf \{ f_m \; ; \; f_m \text{ is a $\CD(K,N)$ model density with $\supp f_m = \supp h$ and $f_m \geq h$ } \} ,
\]
where the infimum is interpreted pointwise. Note that by definition of $\CD(K,N)$ density, the pointwise infimum of a set of $\CD(K,N)$ densities having common support $I \subset \R$ is itself a $\CD(K,N)$ density (whose support is in general a subset of $I$); note that the infimum will automatically be continuous on $I$ since it is upper semi-continuous (being an infimum of continuous functions) and lower semi-continuous (satisfying (\ref{eq:qcd}) with $Q=1$). Hence, assuming the infimum above is over a non-empty set, then $\bar f$ is a $\CD(K,N)$ density satisfying $\bar f \geq h$, and in particular $\supp \bar f = \supp h$.

In addition, define:
\[
\ubar f(x) := \sup \left \{  \begin{array}{l} \brac{\sigma^{(1-t)}_{K, N-1} (|x_1-x_0|) h^{\frac{1}{N-1}}(x_0) + \sigma^{(t)}_{K, N-1} (|x_1-x_0|) h^{\frac{1}{N-1}}(x_1)}^{N-1} \;  ;   \\ 
  (1-t) x_0 + t x_1 = x ~,~ t \in [0,1] ~,~ x_0,x_1 \in \supp h  \end{array} \right \} ,
\]
if $x \in \supp h$ and $\ubar f(x) = 0$ otherwise. Note that by definition of $\QCD$ density, $\ubar f \leq Q h$.

We will show that $\bar f = \ubar f$ on $\intt \supp h$, and so setting $f = \bar f$, will conclude that $f$ is a $\CD(K,N)$ density on $\supp h$ with $h \leq f \leq Qh$ on $\intt \supp h$ (and hence on $\supp h$ by continuity of $h$), as desired. To this end, we require the following:

\begin{lemma}
For all $x \in \intt \supp h$, there exists a $\CD(K,N)$ model density $f_m$ so that $f_m(x) = \ubar f(x)$ and $f_m \geq h$. 
\end{lemma}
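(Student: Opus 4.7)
The plan is to find a pair $(x_0^*,x_1^*) \in (\supp h)^2$ with $x_0^* \leq x \leq x_1^*$ that achieves the supremum defining $\ubar{f}(x)$, and then take $f_m$ to be the $\CD(K,N)$ model density whose $(N-1)$-st root $u := f_m^{1/(N-1)}$ solves the linear ODE $u'' + \frac{K}{N-1} u = 0$ on $[x_0^*,x_1^*]$ with boundary data $u(x_i^*) = h^{1/(N-1)}(x_i^*)$ for $i=0,1$, extended maximally to its interval of positivity $I_{f_m}$.

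For the existence of a maximizer, recall that the $\QCD(Q,K,N)$ hypothesis and continuity of $h$ at $x \in \intt(\supp h)$ together yield $\ubar{f}(x) \leq Q h(x) < \infty$. The functional
\[
\Phi(x_0,x_1) := \sigma^{(1-t)}_{K,N-1}(|x_1-x_0|)\, h^{1/(N-1)}(x_0) + \sigma^{(t)}_{K,N-1}(|x_1-x_0|)\, h^{1/(N-1)}(x_1), \quad t = \tfrac{x-x_0}{x_1-x_0},
\]
is continuous in $(x_0,x_1)$ on the closed set $\{(x_0,x_1) \in (\supp h)^2 : x_0 \leq x \leq x_1\}$, with the degenerate slices $x_0 = x$ or $x_1 = x$ both evaluating to $h^{1/(N-1)}(x)$. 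When $K > 0$, finiteness of $\Phi$ forces $\diam(\supp h) \leq D_{K,N}$, so $\supp h$ is compact and the supremum is attained by Weierstrass; the same compactness argument covers $K \leq 0$ whenever $\supp h$ is bounded. Given a maximizer $(x_0^*,x_1^*)$, the standard boundary-value representation $u(z) = u(x_0^*)\, s_\kappa(x_1^*-z)/s_\kappa(x_1^*-x_0^*) + u(x_1^*)\, s_\kappa(z-x_0^*)/s_\kappa(x_1^*-x_0^*)$ with $\kappa := K/(N-1)$, evaluated at $z=x$ with the prescribed boundary data and then raised to the $(N-1)$-st power, gives precisely $f_m(x) = \ubar{f}(x)$.

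For the inequality $f_m \geq h$, I would argue by contradiction: if $f_m(y) < h(y)$ for some $y \in \supp h$, I would construct a competitor to $(x_0^*,x_1^*)$ that strictly improves $\Phi$. When $y \in (x_0^*,x_1^*)$ with (say) $y \leq x$, consider the pair $(y,x_1^*)$: the solution $\tilde u$ of the same ODE on $[y,x_1^*]$ with boundary data $h^{1/(N-1)}(y),h^{1/(N-1)}(x_1^*)$ agrees with $u$ at $x_1^*$ and strictly exceeds it at $y$, and since the Green's weights $s_\kappa(\cdot)/s_\kappa(\cdot)$ are strictly positive on the relevant range, linearity in boundary data forces $\tilde u(x) > u(x)$, contradicting the maximality of $\Phi(x_0^*,x_1^*)$. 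The cases $y \in I_{f_m} \setminus [x_0^*,x_1^*]$ and $y \notin I_{f_m}$ are handled by the analogous competitors $(x_0^*,y)$ or $(y,x_1^*)$; their admissibility $|y-x_0^*| < D_{K,N}$ is automatic, since $h(y) > 0$ together with $\ubar{f}(x) < \infty$ rules out the alternative.

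The main technical obstacle will be twofold: (i) attainment of the supremum when $\supp h$ is unbounded (possible only when $K \leq 0$), which requires a separate limiting argument exploiting the uniform bound $\Phi \leq (Q h(x))^{1/(N-1)}$ to prevent escape of mass to infinity; and (ii) the degenerate case in which every maximizer satisfies $x_0^* = x$ or $x_1^* = x$ so that $\ubar{f}(x) = h(x)$, in which one must instead produce a $\CD(K,N)$ model density tangent to $h$ from above at $x$, constructed via a perturbative variant of the competitor argument above.
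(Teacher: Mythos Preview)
Your strategy differs from the paper's. Both share the key observation that a model density $f_m$ with $f_m(x)=\ubar f(x)$ cannot dip below $h$ on \emph{both} sides of $x$: if $f_m(x_0)<h(x_0)$ and $f_m(x_1)<h(x_1)$ with $x_0<x<x_1$, then the interpolation through $(x_0,h(x_0))$ and $(x_1,h(x_1))$ exceeds $f_m(x)=\ubar f(x)$, contradicting the definition of $\ubar f$. Your competitor argument is exactly this. Where the two approaches diverge is in producing the correct $f_m$.

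The paper does not look for a maximizer at all. Instead it parametrizes the one-parameter family of model densities with $f_m(x)=\ubar f(x)$ by the slope $s=f_m'(x)\in\R$. Under the contrapositive hypothesis, each such $f_m$ must fail to dominate $h$ somewhere, and by the shared observation it fails either only on the left ($L$) or only on the right ($R$). These are open, disjoint conditions in $s$ whose union is $\R$; connectedness forces one of them to be empty. But as $s\to+\infty$ the model density collapses to zero at any fixed point to the left of $x$, forcing $L$ to occur, and symmetrically $R$ occurs as $s\to-\infty$. This yields the contradiction with no compactness, no attainment, and no case analysis.

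Your route, by contrast, hinges on the two obstacles you flag, and these are not minor technicalities. Obstacle (i) is real: for $K\leq 0$ with unbounded $\supp h$, a maximizing sequence can have one endpoint escaping to infinity or collapsing to $x$, and the bound $\Phi\leq (Qh(x))^{1/(N-1)}$ alone does not prevent this. Obstacle (ii) is more serious: when every maximizer is degenerate (so $\ubar f(x)=h(x)$), what you need is a model density touching $h$ from above at $x$ --- which is precisely the content of the lemma, and your ``perturbative variant of the competitor argument'' does not indicate how to select the right slope. In effect, handling obstacle (ii) forces you back to a one-parameter search over slopes, at which point the paper's connectedness argument is the natural completion. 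As written, the proposal is therefore incomplete; replacing the maximizer construction by the slope-parametrization argument closes both gaps simultaneously.
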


Once this lemma is established, it first follows that the infimum in the definition of $\bar f$ is indeed over a non-empty set (by choosing any $x \in \intt \supp h$, applying the lemma and restricting $f_m$ to $\supp h$). Moreover, the lemma immediately implies that $\bar f \leq \ubar f$ on $\intt \supp h$. On the other hand, we also have $\bar f \geq \ubar f$ on $\supp h$, since if $f_m$ is a $\CD(K,N)$ model density with $f_m \geq h$, then for any $t \in [0,1]$ and  $x ,x_0 , x_1 \in \supp h$ so that $x = (1-t) x_0 + t x_1$, we have:
\begin{align*}
f_m^{\frac{1}{N-1}}(x) & = \sigma^{(1-t)}_{K, N-1} (|x_1-x_0|) f_m^{\frac{1}{N-1}}(x_0) + \sigma^{(t)}_{K, N-1} (|x_1-x_0|) f_m^{\frac{1}{N-1}}(x_1) \\
& \geq \sigma^{(1-t)}_{K, N-1} (|x_1-x_0|) h^{\frac{1}{N-1}}(x_0) + \sigma^{(t)}_{K, N-1} (|x_1-x_0|) h^{\frac{1}{N-1}}(x_1) ,
\end{align*}
and so taking supremum over $t,x_0,x_1$ as above, it follows that $f_m(x) \geq \ubar f(x)$, and taking infimum over $f_m$ as above, we indeed verify that $\bar f \geq  \ubar f$. This implies that $\bar f = \ubar f$ on $\intt \supp h$, and so all that remains is to establish the lemma. 

Given $x \in \intt \supp h$, assume in the contrapositive that there is no $\CD(K,N)$ model density $f_m$ so that $f_m(x) = \ubar f(x)$ and $f_m \geq h$. Hence, for any $\CD(K,N)$ model density $f_m$ of maximal support (and therefore continuous on $\supp h$) so that $f_m(x) = \ubar f(x)$, either there exists $x_1 > x$ so that $0 < f_m(x_1) < h(x_1)$ or there exists $x_0 < x$ so that $0< f_m(x_0) < h(x_0)$, but it is impossible that both possibilities occur simultaneously, since otherwise, as $x_0,x_1 \in \supp f_m \cap \supp h$, we would have (for $t \in (0,1)$ so that $x = (1-t) x_0 + t x_1$): 
\begin{align*}
f_m^{\frac{1}{N-1}}(x) & = \sigma^{(1-t)}_{K, N-1} (|x_1-x_0|) f_m^{\frac{1}{N-1}}(x_0) + \sigma^{(t)}_{K, N-1} (|x_1-x_0|) f_m^{\frac{1}{N-1}}(x_1) \\
& < \sigma^{(1-t)}_{K, N-1} (|x_1-x_0|) h^{\frac{1}{N-1}}(x_0) + \sigma^{(t)}_{K, N-1} (|x_1-x_0|) h^{\frac{1}{N-1}}(x_1) \leq \ubar f^{\frac{1}{N-1}}(x) ,
\end{align*}
a contradiction. Let us denote the first possibility above by $R$ and the second by $L$.

By the second order ODE description (\ref{eq:CDKN-model}), the set of $\CD(K,N)$ model densities $f_m$ of maximal support with a given value of $f_m(x)$ is parametrized by its slope $s = f_m'(x) \in \R$, and varies continuously in $s$. Consequently, $L$ and $R$ are complementing open conditions with respect to $s \in \R$, and so by connectedness of $\R$, either $L$ or $R$ must hold for \emph{all} $f_m$ with $f_m(x) = \ubar f(x)$ simultaneously. But this is impossible:
fixing $x_0 < x < x_1$ so that $x_0,x_1 \in \intt \supp h$ (i.e. $h(x_0),h(x_1) > 0$), it is immediate to show (see \cite[Lemma 3.1]{EMilmanNegativeDimension}) that $f_m(x_0) \rightarrow 0$ when $s \rightarrow +\infty$ and that $f_m(x_1) \rightarrow 0$ when $s \rightarrow -\infty$, and so both possibilities $L$ and $R$ can occur, a contradiction. Note that this argument is also valid when $K > 0$, even though the (maximal) support of $f_m$ may not contain $\supp h$. 

This concludes the proof of the lemma, and hence of the proposition. 
\end{proof}

\section{Functional Inequalities on $\QCD$ spaces} \label{sec:inqs}

\subsection{Equivalent Formulation, Monotonicity and Stability}

We begin this section by rewriting the $L^p$-Poincar\'e and log-Sobolev inequalities we consider in this work in an equivalent form. 
Note that since $\Omega$ is always assumed bounded, $(\supp(\mm),\d)$ is proper by the underlying $\MCP(K',N')$ assumption, $\mm$ is locally finite, and the test function $f$ is locally Lipschitz, then all integrals involved in these inequalities are necessarily finite. We formulate the inequalities a bit more generally, using a bounded $\Lambda \supset \Omega$ instead of $\conv(\Omega)$ on the energy side of the inequalities. 

\begin{itemize}
\item The $L^p$-Poincar\'e constant $\lambda_p[(X,\d,\mm),\Omega,\Lambda]$ is defined as the best constant $\lambda_p$ so that
for any (locally) Lipschitz function $f : (X,\d) \rightarrow \R$:
\begin{equation} \label{eq:def-Lp}
\int_{\Omega} |f|^{p-2} f \mm = 0  \;\; \Rightarrow \;\; \lambda_{p} \int_{\Omega} |f|^p \mm \leq \int_{\Lambda} |\nabla_{X} f|^p \mm .
\end{equation}
Note that it coincides with the best constant $\lambda_p$ so that for any (locally) Lipschitz function $f : (X,\d) \rightarrow \R$:
\[
\lambda_p \min_{c \in \R} \int_{\Omega} |f - c|^p \mm \leq \int_{\Lambda} |\nabla_X f|^p \mm . 
\]
Indeed, this is immediate after noting that the unique minimizing $c$ above (since $p \in (1,\infty)$) satisfies $\int_{\Omega} |f-c|^{p-2} (f-c) \mm = 0$, and of course $|\nabla_X f| = |\nabla_X (f-c)|$. 
\item The log-Sobolev constant $\lambda_{LS}[(X,\d,\mm),\Omega,\Lambda]$ is defined as the best constant $\lambda_{LS}$ so that 
for any (locally) Lipschitz function $f : (X,\d) \rightarrow \R$:
\begin{equation} \label{eq:def-LS}
\int_{\Omega} (f^2 - 1) \mm = 0  \;\; \Rightarrow \;\; \frac{\lambda_{LS}}{2} \int_{\Omega} f^2 \log(f^2) \mm \leq \int_{\Lambda} |\nabla_{X} f|^2 \mm .
\end{equation}
It coincides (when $\mm(\Omega) > 0$) with the best constant $\lambda_{LS}$ so that for any (locally) Lipschitz function $f : (X,\d) \rightarrow \R$:
\[
\lambda_{LS} \int_{\Omega} \brac{ \Phi(f^2)  - \Phi\brac{\frac{1}{\mm(\Omega)} \int_{\Omega} f^2 \mm } } \mm \leq \int_{\Lambda} |\nabla_X f|^2 \mm ,
\]
where $\Phi(x) := x \log(x)$. Indeed, this is immediate to check by applying (\ref{eq:def-LS}) to $f/\sqrt{c}$ with $c = \int_{\Omega} f^2 \mm / \mm(\Omega)$ whenever $c > 0$ on one hand, and noting that $\Phi(1) = 0$ on the other. Furthermore, the convexity of $\Phi : \R_+ \rightarrow \R$ ensures (see Holley--Stroock \cite{HolleyStroockPerturbationLemma} or the proof of \cite[Proposition 5.5]{Ledoux-Book}) that for all non-negative $g$ for which the integrals below are finite:
\[
\int_{\Omega} \brac{ \Phi(g)  - \Phi\brac{\frac{1}{\mm(\Omega)} \int_{\Omega} g \mm } } \mm = \inf_{t \in \R_+} \int_{\Omega} \brac{\Phi(g) - \Phi(t) - \Phi'(t) (g-t) } \mm ,
\]
and that the integrand on the right-hand-side is non-negative for each $t$. 
\end{itemize}

We conclude that we can express each of our functional inequalities (\ref{eq:def-Lp}) and (\ref{eq:def-LS}) in the form:
\begin{equation} \label{eq:gen-func}
\lambda_*[(X,\d,\mm),\Omega,\Lambda] \inf_{\alpha \in A} \int_{\Omega} F_\alpha(f) \mm \leq \int_{\Lambda} G(|\nabla_X f|) \mm \;\;\;  \forall \text{ locally Lipschitz $f$} ,
\end{equation}
for an appropriate $G$ and family $\{F_\alpha\}_{\alpha \in A}$ of \emph{non-negative functionals} (depending on $\lambda_* \in \{\lambda_p , \lambda_{LS} \}$), with identical best constants in either formulation. Two immediate crucial consequences are:

\begin{lemma} \label{lem:mon-stable}
The best constant $\lambda_*[(X,\d,\mm),\Omega,\Lambda]$ in (\ref{eq:gen-func}) satisfies:
\begin{enumerate}
\item Monotonicity: if $\Omega_2 \subset \Omega_1, \Lambda_2 \supset \Lambda_1$ then $\lambda_*[(X,\d,\mm),\Omega_2,\Lambda_2] \geq \lambda_*[(X,\d,\mm),\Omega_1,\Lambda_1]$. 
\item Stability: if $\mm_2 \leq c_1 \mm_1$ on $\Omega$ and $\mm_1 \leq c_2 \mm_2$ on $\Lambda$ then $\lambda_*[(X,\d,\mm_2),\Omega,\Lambda] \geq \frac{1}{c_1 c_2} \lambda_*[(X,\d,\mm_1),\Omega,\Lambda]$. 
\end{enumerate}
\end{lemma}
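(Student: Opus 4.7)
The plan is to exploit the unified form (5.3) that the author has just derived, in which both functional inequalities are written as
\[
\lambda_*[(X,\d,\mm),\Omega,\Lambda] \inf_{\alpha \in A} \int_{\Omega} F_\alpha(f) \mm \leq \int_{\Lambda} G(|\nabla_X f|) \mm,
\]
with $F_\alpha \geq 0$ (via Holley--Stroock in the log-Sobolev case) and $G \in \{x^p, x^2\}$ also non-negative. Everything reduces to comparing integrals of non-negative quantities over nested domains or against comparable measures, together with the elementary fact that $\inf_\alpha g_\alpha \leq \inf_\alpha h_\alpha$ whenever $g_\alpha \leq h_\alpha$ pointwise in $\alpha$. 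I also note that $|\nabla_X f|$ is defined purely in terms of $\d$, so it is unchanged when we swap the reference measure.

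For \textbf{monotonicity}, I would fix a locally Lipschitz $f$, set $\lambda_1 := \lambda_*[(X,\d,\mm),\Omega_1,\Lambda_1]$, and chain three inequalities. Non-negativity of $F_\alpha$ together with $\Omega_2 \subset \Omega_1$ gives $\int_{\Omega_2} F_\alpha(f)\,\mm \leq \int_{\Omega_1} F_\alpha(f)\,\mm$ for each $\alpha$, and taking infima preserves this. Non-negativity of $G$ together with $\Lambda_1 \subset \Lambda_2$ gives $\int_{\Lambda_1} G(|\nabla_X f|)\,\mm \leq \int_{\Lambda_2} G(|\nabla_X f|)\,\mm$. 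Sandwiching (5.3) for $(\Omega_1,\Lambda_1)$ between these yields (5.3) for $(\Omega_2,\Lambda_2)$ with the same constant $\lambda_1$; by definition of the best constant this is exactly $\lambda_*[(X,\d,\mm),\Omega_2,\Lambda_2] \geq \lambda_1$.

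For \textbf{stability}, I would again fix $f$ locally Lipschitz and $\lambda_1 := \lambda_*[(X,\d,\mm_1),\Omega,\Lambda]$, and apply the measure comparisons termwise: from $\mm_2 \leq c_1 \mm_1$ on $\Omega$ and $F_\alpha \geq 0$ one gets $\int_{\Omega} F_\alpha(f)\,\mm_2 \leq c_1 \int_{\Omega} F_\alpha(f)\,\mm_1$, which survives taking infima over $\alpha$; from $\mm_1 \leq c_2 \mm_2$ on $\Lambda$ and $G \geq 0$ one gets $\int_{\Lambda} G(|\nabla_X f|)\,\mm_1 \leq c_2 \int_{\Lambda} G(|\nabla_X f|)\,\mm_2$. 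Combining with the defining inequality of $\lambda_1$ gives
\[
\inf_\alpha \int_{\Omega} F_\alpha(f)\,\mm_2 \leq c_1 \lambda_1^{-1} \int_{\Lambda} G(|\nabla_X f|)\,\mm_1 \leq \frac{c_1 c_2}{\lambda_1} \int_{\Lambda} G(|\nabla_X f|)\,\mm_2,
\]
which is precisely (5.3) for $\mm_2$ with constant $\lambda_1/(c_1 c_2)$, as required.

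There is no real obstacle here. The only point requiring care is that the statement would be \emph{false} if one naively wrote it directly for the original forms (5.1)--(5.2): changing $\Omega$ or $\mm$ breaks the balancing conditions $\int_{\Omega} |f|^{p-2} f\,\mm = 0$ and $\int_\Omega (f^2-1)\,\mm = 0$, and the log-Sobolev integrand $f^2 \log f^2$ is not even signed. The work has therefore already been done in the preceding paragraphs, where (5.1)--(5.2) were rewritten as infima of non-negative $F_\alpha$ via the minimization over the constant and via Holley--Stroock, respectively; once that reformulation is in hand, the lemma is essentially a tautology.
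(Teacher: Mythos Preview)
Your proposal is correct and matches the paper's approach exactly: the paper states the lemma as an ``immediate crucial consequence'' of the reformulation (\ref{eq:gen-func}) without giving any further argument, and what you have written is precisely the spelled-out version of that immediate argument. Your closing remark about why the reformulation with non-negative $F_\alpha$ is essential (the balancing conditions break and $f^2\log f^2$ is unsigned) is also exactly the point the paper is making by deriving (\ref{eq:gen-func}) first.
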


\subsection{One-dimensional case}

As an immediate corollary, we obtain:
\begin{corollary} \label{cor:lambda-1D}
For any family $\mathscr{X}$ of \emph{one-dimensional} metric measure spaces $(\R,|\cdot|,\mm)$ for which $\supp(\mm)$ is an interval and which is closed under restrictions to intervals, and for any $D \in (0,\infty)$, we have $\lambda_*[\mathscr{X},D] = \bar \lambda_*[\mathscr{X},D]$. In particular, this applies to $\mathscr{X} = \CD_1(K,N)$ and $\mathscr{X} = \QCD_1(Q,K,N)$. 
\end{corollary}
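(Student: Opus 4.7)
The inequality $\lambda_*[\mathscr{X},D] \leq \bar\lambda_*[\mathscr{X},D]$ is immediate from the definitions, since every space $(X,\d,\mm) \in \mathscr{X}$ with $\diam(\supp(\mm)) \leq D$ gives rise to the pair $(\mathcal{X},\supp(\mm)) \in \Xi_{\mathscr{X},D}$, which is merely one particular choice of $\Omega$. The plan is to prove the reverse inequality by exploiting the one-dimensional nature of the problem: in $(\R,|\cdot|)$, the geodesic hull of any closed bounded $\Omega$ is simply its convex hull $I = [\inf \Omega, \sup \Omega]$, which is a closed \emph{interval} of length at most $\diam(\Omega) \leq D$. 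Since $\supp(\mm)$ is assumed to be an interval and $\Omega \subset \supp(\mm)$, we also have $I \subset \supp(\mm)$.

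Fix $(\mathcal{X},\Omega) \in \Xi_{\mathscr{X},D}$ with $\mathcal{X} = (\R,|\cdot|,\mm)$ and let $I = \conv(\Omega)$. The closure-under-restriction hypothesis produces $\mathcal{X}_I := (\R,|\cdot|,\mm\llcorner_I) \in \mathscr{X}$; by construction $\supp(\mm\llcorner_I) = I$ and $\diam(I) \leq D$, so $(\mathcal{X}_I,I) \in \Xi_{\mathscr{X},D}$ and
\[
\lambda_*[\mathcal{X}_I,\supp(\mm\llcorner_I)] \geq \bar\lambda_*[\mathscr{X},D].
\]
What remains is to show $\lambda_*[\mathcal{X},\Omega] \geq \lambda_*[\mathcal{X}_I,I]$, which combined with the above and an infimum over $(\mathcal{X},\Omega) \in \Xi_{\mathscr{X},D}$ yields the desired conclusion.

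For this last step I will use the equivalent variational formulation (\ref{eq:gen-func}) derived in the text, namely
\[
\lambda \inf_{\alpha \in A} \int_{\Omega'} F_\alpha(f) \, \mm \leq \int_{\Lambda'} G(|\nabla_X f|) \, \mm,
\]
with $F_\alpha \geq 0$: for $\lambda_p$ one has $F_c(f) = |f-c|^p$ with $c \in \R$, and for $\lambda_{LS}$ one has $F_t(f) = \Phi(f^2) - \Phi(t) - \Phi'(t)(f^2 - t)$ with $t \in \R_+$, both non-negative. Since $\Omega \subset I = \conv(\Omega)$ and $F_\alpha \geq 0$, pointwise integration gives
\[
\int_I F_\alpha(f)\, \mm \geq \int_\Omega F_\alpha(f)\, \mm \qquad \forall \alpha \in A,
\]
so that $\inf_\alpha \int_I F_\alpha(f) \, \mm \geq \inf_\alpha \int_\Omega F_\alpha(f) \, \mm$. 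Combining with the defining inequality for $\lambda_*[\mathcal{X}_I,I]$ (noting $\conv(I) = I$ and that the energy integral $\int_I G(|\nabla_X f|)\,\mm$ coincides with $\int_{\conv(\Omega)} G(|\nabla_X f|)\,\mm$) yields exactly the defining inequality of $\lambda_*[\mathcal{X},\Omega]$ with the same constant, proving the monotonicity step. This is essentially a refined form of Lemma \ref{lem:mon-stable}(1).

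Finally, one must verify the applicability to $\mathscr{X} \in \{\CD_1(K,N), \QCD_1(Q,K,N)\}$: by Corollary \ref{cor:QCD-density-2} (and its $\CD$ analogue), membership in these families is characterized by a pointwise inequality on $h$ restricted to $\supp(h)$ together with an $\MCP$ density property, and both conditions trivially pass to the restriction $h\cdot 1_I$ whose support is $\supp(h) \cap I$. The main conceptual point, and the only nontrivial one, is the transfer inequality $\lambda_*[\mathcal{X},\Omega] \geq \lambda_*[\mathcal{X}_I,I]$; this depends crucially on the sign of $F_\alpha$ and on the fact that in dimension one, $\conv(\Omega)$ is an interval lying in $\supp(\mm)$, which is precisely what fails in higher dimensions and forces the separate $\lambda_*$ vs. $\bar\lambda_*$ notions elsewhere.
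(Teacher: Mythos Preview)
Your proof is correct and follows essentially the same approach as the paper: both note the trivial direction, then for the reverse use that in one dimension $\conv(\Omega)$ is an interval $I \subset \supp(\mm)$, invoke monotonicity (Lemma \ref{lem:mon-stable}(1)) to pass from $\Omega$ to $I$, restrict $\mm$ to $I$ using closure of the family, and take the infimum. The only differences are cosmetic: you spell out the monotonicity step via the non-negativity of $F_\alpha$ rather than simply citing the lemma, and you add an explicit verification that $\CD_1(K,N)$ and $\QCD_1(Q,K,N)$ are closed under restriction to intervals.
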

\begin{proof}
The inequality $\lambda_*[\mathscr{X},D] \leq \bar \lambda_*[\mathscr{X},D]$ always holds, so we just need to show the converse. Given $(\R,\abs{\cdot},\mm) \in \mathscr{X}$ and a closed $\Omega \subset \supp(\mm)$ of diameter at most $D$, the monotonicity assertion of Lemma \ref{lem:mon-stable} implies:
\begin{align*}
& \lambda_*[(\R,|\cdot|,\mm),\Omega,\conv(\Omega)] \geq \lambda_*[(\R,|\cdot|,\mm),\conv(\Omega),\conv(\Omega)] \\
& = \lambda_*[(\R,|\cdot|,\mm \llcorner_{\conv(\Omega)}),\conv(\Omega),\conv(\Omega)] \geq \bar \lambda_*[\mathscr{X},D] ,
\end{align*}
since $(\R,|\cdot|,\mm \llcorner_{\conv(\Omega)}) \in \mathscr{X}$ and $\supp(\mm \llcorner_{\conv(\Omega)}) = \conv(\Omega)$ is an interval of diameter at most $D$. Taking infimum over all $(\R,\abs{\cdot},\mm)$ and $\Omega$ as above concludes the proof. 
\end{proof}

Since $\conv(\Omega)$ is not necessarily geodesically convex in dimension greater than $1$,
we do not know how to extend the identification between $\lambda_*$ and $\bar \lambda_*$ asserted in Corollary \ref{cor:lambda-1D} to general families of metric-measure spaces. 
However, for families which admit localization to one-dimensional geodesics like $\CD_{reg}(K,N)$ or more generally $\QCD_{reg}(Q,K,N)$, we can in fact extend it as described in Theorem \ref{thm:QCD-lambda-loc} below. 

Together with Proposition \ref{prop:QCD-CD}, we can already conclude the one-dimensional case of Theorem \ref{thm:intro-main}:
\begin{theorem} \label{thm:main-1D}
For all $K \in \R$, $N \in (1,\infty)$, $D \in (0,\infty)$, $Q \geq 1$ and $\lambda_* \in \{ \lambda_p , \lambda_{LS} \}$:
\[
\bar \lambda_*[\CD_{1}(K,N),D] \geq \lambda_*[\QCD_{1}(Q,K,N),D] \geq \frac{1}{Q} \bar \lambda_*[\CD_{1}(K,N),D]  .
\]
\end{theorem}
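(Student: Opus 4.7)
The upper bound $\bar\lambda_*[\CD_1(K,N),D] \geq \lambda_*[\QCD_1(Q,K,N),D]$ is essentially formal. Every $\CD(K,N)$ Monge space is trivially $\QCD(Q,K,N)$ for any $Q \geq 1$ and is $\MCP(K,N)$, so $\CD_1(K,N) \subset \QCD_1(Q,K,N)$. Taking the infimum of $\lambda_*$ over this larger family can only decrease it, which gives $\lambda_*[\QCD_1(Q,K,N),D] \leq \lambda_*[\CD_1(K,N),D]$, and Corollary~\ref{cor:lambda-1D} identifies $\lambda_*[\CD_1(K,N),D] = \bar\lambda_*[\CD_1(K,N),D]$ in the one-dimensional setting.

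For the main inequality $\lambda_*[\QCD_1(Q,K,N),D] \geq \frac{1}{Q}\bar\lambda_*[\CD_1(K,N),D]$, the plan is to fix an arbitrary $(\R,|\cdot|,h\L^1) \in \QCD_1(Q,K,N)$ and closed $\Omega \subset \supp h$ with $\diam(\Omega) \leq D$. By Corollary~\ref{cor:QCD-density-2}, after modification on a null set (which does not affect $h\L^1$) I may assume that $h$ is itself a $\QCD(Q,K,N)$ density. Proposition~\ref{prop:QCD-CD} then produces a $\CD(K,N)$ density $f$ with $h \leq f \leq Qh$ pointwise; in particular $\supp f = \supp h$, and so $(\R,|\cdot|,f\L^1) \in \CD_1(K,N)$.

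The heart of the argument is then to transfer the functional inequality from $f\L^1$ to $h\L^1$ via the stability clause of Lemma~\ref{lem:mon-stable}. I would apply it with $\mm_2 = h\L^1$, $\mm_1 = f\L^1$, $c_1 = 1$ on $\Omega$ (coming from $h \leq f$), and $c_2 = Q$ on $\Lambda = \conv(\Omega)$ (coming from $f \leq Qh$), obtaining
\[
\lambda_*[(\R,|\cdot|,h\L^1),\Omega,\conv(\Omega)] \geq \frac{1}{Q}\,\lambda_*[(\R,|\cdot|,f\L^1),\Omega,\conv(\Omega)].
\]
Since $(\R,|\cdot|,f\L^1) \in \CD_1(K,N)$ and $\diam(\Omega) \leq D$, the right-hand side is bounded below by $\frac{1}{Q}\lambda_*[\CD_1(K,N),D]$ by definition of the latter as an infimum, which by Corollary~\ref{cor:lambda-1D} equals $\frac{1}{Q}\bar\lambda_*[\CD_1(K,N),D]$. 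Taking infimum over all such $h$ and $\Omega$ closes the argument.

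I do not anticipate any serious obstacle here: the genuinely new content has already been packaged into the one-dimensional result Proposition~\ref{prop:QCD-CD}, and the only care required is to correctly orient the stability estimate so that the asymmetry $h \leq f$ on $\Omega$ and $f \leq Qh$ on $\conv(\Omega)$ conspire (in the same direction) to produce the single factor $1/Q$ rather than $1/Q^2$.
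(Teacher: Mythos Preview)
Your proposal is correct and follows essentially the same approach as the paper: both use the inclusion $\CD_1(K,N)\subset\QCD_1(Q,K,N)$ for the upper bound, and for the lower bound invoke Corollary~\ref{cor:QCD-density-2}, Proposition~\ref{prop:QCD-CD}, and the stability clause of Lemma~\ref{lem:mon-stable} to pass from $h$ to the $\CD(K,N)$ density $f$ at the cost of a factor $1/Q$. The only cosmetic difference is that the paper first invokes Corollary~\ref{cor:lambda-1D} to reduce to the $\bar\lambda_*$ version (taking $\Omega=\supp h$ of diameter at most $D$) and then applies stability on $I=\supp h$, whereas you work directly with general $\Omega\subset\supp h$ and apply Corollary~\ref{cor:lambda-1D} at the end; either ordering works.
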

\begin{proof}
The first inequality is trivial since $\CD_{1}(K,N) \subset \QCD_{1}(Q,K,N)$. Taking into account Corollary \ref{cor:lambda-1D}, it remains to establish:
\[
\bar \lambda_*[\QCD_{1}(Q,K,N),D] \geq \frac{1}{Q} \bar \lambda_*[\CD_{1}(K,N),D] . 
\]
Let $(\R,\abs{\cdot},h \L^1) \in \QCD_{1}(Q,K,N)$ with $I = \supp h$ having diameter at most $D$. By Corollary \ref{cor:QCD-density-2}, up to modifications on a null-set, $h$ is a $\QCD(Q,K,N)$ density. By Proposition \ref{prop:QCD-CD}, there exists a $\CD(K,N)$ density $f$ so that $h \leq f \leq Qh$. Consequently, the stability assertion of Lemma \ref{lem:mon-stable} implies that:
\[
\lambda_*[(\R,\abs{\cdot},h \L^1),I,I] \geq \frac{1}{Q} \lambda_*[(\R,\abs{\cdot},f \L^1),I,I] \geq \frac{1}{Q}  \bar \lambda_*[\CD_{1}(K,N),D]  . 
\]
Taking infimum over all $(\R,\abs{\cdot},h \L^1)$ as above concludes the proof. 
\end{proof}

\subsection{Localization}

It remains to establish:

\begin{theorem} \label{thm:QCD-lambda-loc}
For all $K \in \R$, $N \in (1,\infty)$, $D \in (0,\infty)$, $Q \geq 1$ and $\lambda_* \in \{ \lambda_p , \lambda_{LS} \}$:
\[
\bar \lambda_*[\QCD_{reg}(Q,K,N),D] = \lambda_*[\QCD_{reg}(Q,K,N),D] = \bar \lambda_*[\QCD_{1}(Q,K,N),D] = \lambda_*[\QCD_{1}(Q,K,N),D] .
\]
\end{theorem}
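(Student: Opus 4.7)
The plan is to reduce the four-fold equality to a single non-trivial inequality via routine comparisons, and then invoke the general localization theorem (Theorem~\ref{thm:gen-loc}) to transport that inequality down to the one-dimensional setting, where Corollary~\ref{cor:lambda-1D} closes the loop. I begin by recording the easy inequalities. Since $\QCD_1(Q,K,N)\subset\QCD_{reg}(Q,K,N)$, restricting the infima to the smaller family gives $\bar\lambda_*[\QCD_{reg},D]\le\bar\lambda_*[\QCD_1,D]$ and $\lambda_*[\QCD_{reg},D]\le\lambda_*[\QCD_1,D]$; moreover $\lambda_*[\mathscr X,D]\le\bar\lambda_*[\mathscr X,D]$ holds on any family by definition, and Corollary~\ref{cor:lambda-1D} applies to $\QCD_1(Q,K,N)$ (both $\QCD$ and $\MCP$ are preserved under restriction to subintervals), giving $\lambda_*[\QCD_1,D]=\bar\lambda_*[\QCD_1,D]$. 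Chaining,
\[
\lambda_*[\QCD_{reg},D]\le\bar\lambda_*[\QCD_{reg},D]\le\bar\lambda_*[\QCD_1,D]=\lambda_*[\QCD_1,D],
\]
so the entire theorem reduces to proving the reverse bound $\lambda_*[\QCD_{reg}(Q,K,N),D]\ge\bar\lambda_*[\QCD_1(Q,K,N),D]$.

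To establish this, I fix $(\mathcal X,\Omega)\in\Xi_{\QCD_{reg}(Q,K,N),D}$ with $\mathcal X=(X,\d,\mm)$ and a locally Lipschitz $f:X\to\R$ satisfying the balance condition in~(\ref{eq:gen-func}), and recast the $\QCD(Q,K,N)$ defining inequality as an instance of the hypothesis of Theorem~\ref{thm:gen-loc}. Using the identity $\tau^{(t)}_{K,N}(\theta)=t^{1/N}\sigma^{(t)}_{K,N-1}(\theta)^{(N-1)/N}$ and absorbing the factor $Q^{-1/N}$ inside the $(N-1)/N$-power, the $\QCD$ condition takes the form of that hypothesis with the (separately continuous, possibly $+\infty$-valued) choice
\[
\sigma^{(t)}_0(\theta)=\sigma^{(t)}_1(\theta)=Q^{-\frac{1}{N-1}}\sigma^{(t)}_{K,N-1}(\theta).
\]
I then apply Theorem~\ref{thm:gen-loc} to the balanced function $g:=F^{\ast}(f)\cdot 1_\Omega$, where $F^{\ast}(s)=|s|^{p-2}s$ for $\lambda_p$ and $F^{\ast}(s)=s^2-1$ for $\lambda_{LS}$; the required $\d$-weighted $L^1$-integrability is immediate because $\Omega$ is compact (closed bounded in the proper space $(\supp\mm,\d)$) and $f$ is bounded on it. The output is a disintegration $\mm\llcorner_\T=\int_\Q h_q\Haus^1\llcorner_{X_q}\qq(dq)$ along closed geodesics $X_q$, with $\int g\,\mm_q=0$ for $\qq$-a.e.\ $q$ and $g\equiv 0$ on $X\setminus\T$. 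By conclusion~(6), the continuous density $h_q$ is a $\QCD(Q,K,N)$ density on $X_q$, and combining with the $\MCP$-inheritance of Theorem~\ref{thm:localization-MCP}(4) and Corollary~\ref{cor:QCD-density-2}, each arclength-parametrised fibre $(X_q,\d,\mm_q)$ is a genuine member of $\QCD_1(Q,K,N)$.

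Next, I apply the one-dimensional inequality on each fibre. Setting $\Omega_q:=X_q\cap\Omega$, which is closed in $X_q$ of diameter at most $D$, the identity $\int g\,\mm_q=0$ is precisely the one-dimensional balance for $f|_{X_q}$ over $\Omega_q$, so Corollary~\ref{cor:lambda-1D} inside $(X_q,\d,\mm_q)\in\QCD_1(Q,K,N)$ yields
\[
\bar\lambda_*[\QCD_1(Q,K,N),D]\int_{\Omega_q}F_\alpha(f)\,\mm_q\le\int_{\conv(\Omega_q)}G\bigl(|\nabla_{X_q}f|\bigr)\,\mm_q.
\]
Integrating against $\qq$ and using the disintegration closes the argument: on the left, the integrand vanishes on $\Omega\setminus\T$ (for $\lambda_p$, $g=0$ forces $f=0$ and hence $|f|^p=0$; for $\lambda_{LS}$, $g=0$ forces $f^2=1$ and hence $f^2\log f^2=0$), so the left-hand side recovers $\int_\Omega F_\alpha(f)\,\mm$; on the right, the inclusion $\conv(\Omega_q)\subset X_q\cap\conv(\Omega)$ (each $X_q$ is itself a single geodesic in $X$, so its intrinsic hull of $\Omega_q$ lies inside the ambient $\conv(\Omega)$) together with the pointwise bound $|\nabla_{X_q}f|\le|\nabla_Xf|$ (restricting approach directions to $X_q$ can only shrink the local slope) bounds the right-hand side by $\int_{\conv(\Omega)}G(|\nabla_Xf|)\,\mm$. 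By the equivalent formulation~(\ref{eq:gen-func}), this is the desired functional inequality on $(\mathcal X,\Omega)$ with constant $\bar\lambda_*[\QCD_1,D]$, and taking the infimum over $(\mathcal X,\Omega)\in\Xi_{\QCD_{reg},D}$ concludes.

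The main obstacle is conceptual rather than technical, and was already isolated in \cite{HanEMilman-MCP-Poincare}: the interaction between the ambient geodesic hull $\conv(\Omega)$ in $X$ and the fibrewise hull $\conv(\Omega_q)$ in $X_q$. Had I used the traditional formulation of the inequalities with $\Omega$ on \emph{both} sides, the localization step would have collapsed for non-convex $\Omega$, since $\conv(\Omega_q)$ can be strictly larger than $\Omega_q$ and need not lie inside $\Omega$. The $\conv(\Omega)$-variant on the energy side is calibrated precisely to absorb the one-dimensional hull emerging from the localization, which is what allows the argument to go through for arbitrary closed $\Omega$ — an indispensable feature in the sub-Riemannian settings motivating this work, where genuinely geodesically convex subsets are notoriously scarce.
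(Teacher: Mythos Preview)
Your proof is correct and follows essentially the same route as the paper's: reduce to the single inequality $\lambda_*[\QCD_{reg},D]\ge\bar\lambda_*[\QCD_1,D]$, apply the general localization Theorem~\ref{thm:gen-loc} with $\sigma_i^{(t)}=Q^{-1/(N-1)}\sigma^{(t)}_{K,N-1}$ to the balanced function $g=Z_*(f)1_\Omega$, obtain the one-dimensional $\QCD_1$ inequality along each fibre, and integrate back using $\conv(\Omega_q)\subset X_q\cap\conv(\Omega)$ and $|\nabla_{X_q}f|\le|\nabla_Xf|$. The one cosmetic difference is that the paper introduces the intermediate set $L_q:=\conv_{X_q}(X_q\cap\supp(g\mm))$ and checks that $Z_*(f)$ vanishes $\mm_q$-a.e.\ on $(X_q\cap\Omega)\setminus L_q$ before applying the functional inequality on $(L_q,\d,\mm_q\llcorner_{L_q})$; your direct use of $\Omega_q=X_q\cap\Omega$ together with Corollary~\ref{cor:lambda-1D} sidesteps this step and is a mild streamlining of the same argument.
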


\noindent
In conjunction with Theorem \ref{thm:main-1D}, this will establish our main Theorem \ref{thm:intro-main}. 

\begin{proof}[Proof of Theorem \ref{thm:QCD-lambda-loc}]
Since $\QCD_1(Q,K,N) \subset \QCD_{reg}(Q,K,N)$ and $\bar \lambda_* \geq \lambda_*$ always, we trivially have:
\[
\bar \lambda_*[\QCD_{1}(Q,K,N),D] \geq \bar \lambda_*[\QCD_{reg}(Q,K,N),D] \geq \lambda_*[\QCD_{reg}(Q,K,N),D]  ,
\]
so it remains to establish that $\lambda_*[\QCD_{reg}(Q,K,N),D] \geq \bar \lambda_*[\QCD_{1}(Q,K,N),D]$ to close the chain of inequalities and conclude that they are in fact all equalities. 
Denote by $Z_* : \R \rightarrow \R$, $* \in \{ p , LS \}$, the function $Z_p(t) := |t|^{p-2} t$ and $Z_{LS}(t) := t^2 - 1$. 
Given $(X,\d,\mm) \in \QCD_{reg}(Q,K,N)$, a closed $\Omega \subset \supp(\mm)$ with $\diam(\Omega) \leq D$, and a (locally) Lipschitz function $f$ on $(X,\d)$ with $\int_\Omega Z_*(f) \mm = 0$, set $g = Z_*(f) 1_{\Omega}$. As $\Omega$ is bounded, $(\supp(\mm),\d)$ is proper by $\MCP(K',N')$, and $\mm$ is locally finite, the integrability assumption $\int_X |g(x)| d(x,x_0) \mm(dx) < \infty$ is clearly satisfied, and we may apply the Generalized Localization Theorem \ref{thm:gen-loc} with the $\QCD$ interpolation weights $\sigma_i^{(t)}(\theta) = Q^{-\frac{1}{N-1}}\sigma^{(t)}_{K,N-1}$ (recalling in addition Corollary \ref{cor:QCD-density}). 

It follows that there exists an $\mm$-measurable subset $\T \subset X$ and a family $\{X_{q}\}_{q \in \Q} \subset X$ so that the following disintegration of $\mm\llcorner_{\T}$ on $\{X_{q}\}_{q \in \Q}$ holds:
\[
\mm\llcorner_{\T} = \int_{\Q} \mm_{q} \, \qq(dq)  ~,~ \qq(\Q) = 1 ,
\]  
and for $\qq$-a.e. $q \in \Q$:
\begin{enumerate}
\item $X_q$ is a closed geodesic in $(X,\d)$. 
\item $\mm_q$ is a Radon measure supported on $X_q$ with $\mm_q \ll  \Haus^1 \llcorner_{X_q}$.
\item $\int_{X_q \cap \Omega} Z_*(f) \mm_q = \int  g \mm_q = 0$.
\item $(X_{q}, \d,\mm_{q})$ verifies $\MCP(K',N')$.
\item $(X_{q}, \d,\mm_{q})$ verifies $\QCD(Q,K,N)$.
\end{enumerate}
In addition, $g \equiv 0$ $\mm$-a.e. on $X \setminus \T$, implying that $Z_*(f) \equiv 0$  $\mm$-a.e. on $\Omega \setminus  \T$.

Since $\supp(g \mm) \subset \Omega$, we know that $\diam(\supp(g \mm)) \leq D$. 
Let $q \in \Q$ be such that all of the above properties hold, and denote:
\[
L_q := \conv_{X_q}(X_q \cap \supp(g \mm)) \; ,
\]
where the geodesic (convex) hull is taken in the metric space $(X_{q}, \d)$ which is isometric to a closed subinterval of $(\R,|\cdot|)$. It follows that $\diam(L_q) \leq D$, and we have:
\begin{equation} \label{eq:Lq}
X_q \cap \supp(g \mm) \subset L_q \subset X_q \cap \conv(\supp(g \mm))   .
\end{equation}
Since $\mm\llcorner_{\T}(\{ g \neq 0\} \setminus \supp(g \mm)) = 0$, the above disintegration and Fubini's theorem imply that for $\qq$-a.e. $q \in \Q$, $g \equiv 0$ $\mm_q$-a.e. on $X \setminus \supp(g \mm)$ and in particular on $X_q \setminus L_q$. It follows by property (3) that for $\qq$-a.e. $q \in \Q$:
\begin{enumerate}
\setcounter{enumi}{5}
\item $Z_*(f) \equiv 0$ $\mm_q$-a.e. on $X_q \cap \Omega \setminus (L_q \cap \Omega)$ and $\int_{L_q\cap \Omega} Z_*(f) \mm_q = 0$. 
\end{enumerate}
We therefore add this requirement from $q$ to our previous requirements, as they all hold for $\qq$-a.e. $q \in \Q$.

Since the $\QCD(Q,K,N)$ and $\MCP(K',N')$ conditions are closed under restrictions onto geodesically convex subsets, it follows that $(L_q , \d, \mm_q\llcorner_{L_q})$ verifies both conditions; however, since $\Omega$ was not assumed to be geodesically convex, note that $(L_q \cap \Omega, \d, \mm_q\llcorner_{(L_q \cap \Omega)})$ \textbf{may not} satisfy $\QCD(Q,K,N)$ nor $\MCP(K',N')$.
Nevertheless, by the monotonicity property established in Lemma \ref{lem:mon-stable}:
\[ 
\lambda_*[(L_q,\d, \mm_q \llcorner_{L_q}), L_q \cap \Omega, L_q]  \geq \lambda_*[(L_q,\d,\mm_q \llcorner_{L_q}), L_q , L_q] \geq \bar \lambda_*[\QCD_1(Q,K,N),D] ,
\]
where the last inequality is due to the fact that $(L_q , \d, \mm_q\llcorner_{L_q})$ is (isometric to) a one-dimensional metric-measure space satisfying $\QCD(Q,K,N)$ and $\MCP(K',N')$ and $\diam(L_q) \leq D$.

Since $\int_{L_q\cap \Omega} Z_*(f) \mm_q = 0$ by property (6), we may revert back from the infimum formulation (\ref{eq:gen-func}) of our functional inequality to the standard one in (\ref{eq:def-Lp}) or (\ref{eq:def-LS}) for $\lambda_* \in \{ \lambda_p , \lambda_{LS} \}$, respectively. We conclude that:
\[
\bar \lambda_p[\QCD_1(Q,K,N),D] \int_{L_q \cap \Omega} |f|^p \mm_q \leq \int_{L_q} |\nabla_{L_q} f|^p \mm_q ,
\]
in the first case, and:
\[
 \frac{\bar \lambda_{LS}[\QCD_1(Q,K,N),D]}{2}  \int_{L_q \cap \Omega} f^2 \log(f^2) \mm_q \leq \int_{L_q} |\nabla_{L_q} f|^2 \mm_q ,
\]
in the second. Recall that $Z_*(f) = 0$ $\mm_q$-a.e. on $X_q \cap \Omega \setminus (L_q \cap \Omega)$ by property (6), and hence the integrand on the left-hand-sides above vanishes $\mm_q$-a.e. on $X_q \cap \Omega \setminus (L_q \cap \Omega)$ (also in the $LS$ case, since $Z_{LS}(f) = 0$ iff $f^2 = 1$ iff $\log(f^2) = 0$). It follows that we may enlarge the domain of integration on the left-hand-sides to $X_q \cap \Omega$; on the right-hand-sides we may enlarge the domain of integration to $X_q \cap \conv(\supp(g \mm))$ thanks to the non-negativity of the integrand and (\ref{eq:Lq}). 

Using $|\nabla_{L_q} f|\leq |\nabla_X f|$ and integrating the resulting inequalities with respect to $\qq$, we deduce from the disintegration formula that:
\[
 \bar \lambda_p[\QCD_1(Q,K,N),D]  \int_{\T \cap \Omega} |f|^p \mm \leq  \int_{\T \cap \conv(\supp(g \mm))} |\nabla_{X} f|^p \mm ,
\]
and
\[
  \frac{\bar \lambda_{LS}[\QCD_1(Q,K,N),D]}{2} \int_{\T \cap \Omega} f^2 \log(f^2) \mm \leq  \int_{\T \cap \conv(\supp(g \mm))} |\nabla_{X} f|^2 \mm ,
\]
respectively. Recalling that $Z_*(f) \equiv 0$ $\mm$-a.e. on $\Omega \setminus \T$, we may enlarge as before the domain of integration on the left-hand-sides to $\Omega$; on the right-hand-sides we may enlarge it to $\conv(\Omega) \supset \conv(\supp (g \mm))$. This establishes that $\lambda_*[\QCD_{reg}(Q,K,N),D] \geq \bar \lambda_*[\QCD_{1}(Q,K,N),D]$, thereby concluding the proof. 
\end{proof}

\section{Concluding Remarks} \label{sec:conclude}

\subsection{Curvature Geodesic-Topological Dimension Condition}
Before concluding, we mention an alternative path for deriving the exact same results we obtain in this work, which is more tailored to the ideal sub-Riemannian setting. 

\begin{definition}[Curvature Geodesic-Topological Dimension condition $\CGTD(K,N,n)$]
A Monge space $(X,\d,\mm)$ is said to satisfy the $\CGTD(K,N,n)$ condition, $K \in \R$, $n \in [1,\infty)$, $n \leq N \in (1,\infty)$, if for all 
$\mu_0,\mu_1 \in P_c(X)$ with $\mu_0,\mu_1 \ll \mm$ and for all $t \in (0,1)$:
\begin{multline*}
\rho^{-\frac{1}{n}}_t(\gamma_t) \geq \tau^{(1-t)}_{K,N}(\d(\gamma_0,\gamma_1))^{\frac{N}{n}} \rho^{-\frac{1}{n}}_0(\gamma_0) + \tau^{(t)}_{K,N}(\d(\gamma_0,\gamma_1))^{\frac{N}{n}} \rho^{-\frac{1}{n}}_1(\gamma_1) \\
 \text{for $\nu$-a.e. $\gamma \in \geo(X,\d)$}. 
\end{multline*}
\end{definition}

Note that the $\CGTD(K,N,n)$ condition simultaneously implies both the $\MCP(K,N)$ condition (by dropping the right-most term above), and the $\QCD(2^{N-n},K,N)$ condition (by applying Jensen's inequality as in the proof of Proposition \ref{prop:MCP-QCD}). Repeating the argument in Section \ref{sec:results}, Theorem \ref{thm:interpolation} implies that the $\MCP(K,N)$ condition on an ideal $n$-dimensional sub-Riemannian manifold automatically self-improves to $\CGTD(K,N,n)$, and so all the ideal $n$-dimensional sub-Riemannian manifolds mentioned in Subsection \ref{subsec:QCD} satisfy $\CGTD(0,N,n)$ for some appropriate $N > n$. 

We may then apply the general localization Theorem \ref{thm:gen-loc} to deduce that the $\CGTD(K,N,n)$ condition localizes to one-dimensional geodesics, and so it is enough to study the properties of one-dimensional $\CGTD(K,N,n)$ densities $h$, which by Lemma \ref{lem:1D-densities} are characterized by:
\[
h^{\frac{1}{n-1}}(tx_1+(1-t)x_0) \geq \sigma^{(1-t)}_{K,N}(|x_1-x_0|)^{\frac{N-1}{n-1}} h^{\frac{1}{n-1}}(x_0) + \sigma^{(1-t)}_{K,N}(|x_1-x_0|)^{\frac{N-1}{n-1}} h^{\frac{1}{n-1}}(x_1) ,
\]
for all $x_0, x_1 \in \supp h$ and $t\in (0,1)$. Note that the case $n=1$ is understood in the limiting sense, namely as taking the maximum between the two terms on the right and thus recovering the $\MCP(K,N)$ density characterization. We see again, now on the level of one-dimensional densities, that a $\CGTD(K,N,n)$ density is simultaneously both an $\MCP(K,N)$ density (by dropping the right-most term above) and a $\QCD(2^{N-n},K,N)$ density (by Jensen's inequality). 

Repeating the argument of Section \ref{sec:inqs}, we immediately deduce that $\lambda_*(\CGTD(K,N,n),D) = \lambda_*(\CGTD_1(K,N,n),D) = \bar \lambda_*(\CGTD_1(K,N,n),D)$ (using the obvious analogues of our usual definitions and notation).

While this approach has the clear advantage of providing us with more information on the resulting one-dimensional densities after localization, we do not know how to use this additional information for the study of functional inequalities beyond what the $\QCD(2^{N-n},K,N)$ condition tells us, namely that there is an equivalent $\CD(K,N)$ density $f$ so that $h \leq f \leq 2^{N-n} h$, so that $\bar \lambda_*(\CGTD_1(K,N,n),D) \geq \frac{1}{2^{N-n}} \bar \lambda_*(\CD_1(K,N),D)$, thus arriving to the same conclusion as before. For this reason, we have chosen to present our results using the more general $\QCD$ condition, in the hope that it would also be applicable in more general settings beyond the sub-Riemannian one, when the $\CGTD$ condition is inapplicable. 

\subsection{Additional properties and variants}

Continuing in the same vein,  one can engage in a more comprehensive study of the $\QCD$ or $\CGTD$ conditions: determining what would be a good definition without a-priori assuming that the space is Monge or essentially non-branching, studying the stability of the resulting definition under measured Gromov--Hausdorff convergence and tensorization, rewriting it in terms of the $N$-Renyi entropy, extending the definition to include $N = \infty$, etc...~(in analogy to the Lott--Sturm--Villani program for the $\CD$ case). One can also introduce the $\RQCD$ and $\RCGTD$ conditions, in analogy to the $\RCD$ condition, by adding the assumption that the space is infinitesimally Hilbertian \cite{AGS-RCD, Gigli-MAMS-differential}, 
as it is known that sub-Riemannian Carnot groups are indeed infinitesimally Hilbertian \cite{LucicPasqualetto}.
Another interesting direction suggested by the referee is to investigate the relation between $\QCD(Q,K,N)$ or $\CGTD(K,N,n)$ and the Baudoin--Garofalo $\CD(\rho_1,\rho_2,\kappa,m)$ condition \cite{BaudoinGarofalo-SubRiemannianCD}. 
 We refrain from pursuing these directions here.

\subsection{Optimality of $Q = 2^{N-n}$ and Brunn--Minkowski Inequalities} \label{subsec:BM}

It was shown in the various references mentioned in Subsection \ref{subsec:QCD} that the corresponding sub-Riemannian manifolds satisfy $\MCP(0,N)$ with $N$ being best possible (i.e. minimal). It is also clear from the application of the (optimal) Jensen inequality (as in the proof of Proposition \ref{prop:MCP-QCD}) that the constant $Q = 2^{N-n}$ is best possible when transitioning from the $\CGTD(K,N,n)$ condition to the $\QCD(Q,K,N)$ one. However, one may wonder whether the overall optimality of the constant $2^{N-n}$ is lost when transitioning from the optimal $\MCP(0,N)$ condition to the $\QCD(2^{N-n} , 0 , N)$ one. We mention here that this is not the case and that the value $Q = 2^{N-n}$ is indeed \emph{optimal} (i.e. minimal) in the $\QCD(Q,0,N)$ condition, at least whenever the parameter $N$ from the $\MCP(0,N)$ condition coincides with the minimal geodesic dimension $\mathcal{N}$ (see \cite[Theorem 5]{BarilariRizzi-Interpolation} for the precise definition of the latter) -- by \cite{Rizzi-MCPonCarnot,BarilariRizzi-MCPonHTypeCarnot,BKS-CorankOneCarnot,LeeLiZelenko,BarilariRizzi-Interpolation,BarilariRizzi-BE}, this is the case for generalized H-type and corank $1$ Carnot groups, the Grushin plane, and Sasakian and $3$-Sasakian manifolds (under appropriate curvature lower bounds). 

To see the aforementioned optimality, observe that a standard application of the localization argument from the previous section would verify that the $\QCD(Q,0,N)$ condition implies the following ``quasi Brunn--Minkowski inequality":
\begin{equation} \label{eq:QBM}
\mm(Z_t(A,B))^{\frac{1}{N}} \geq \frac{1}{Q^{\frac{1}{N}}} \brac{(1-t) \mm(A)^{\frac{1}{N}} + t \mm(B)^{\frac{1}{N}} } ,
\end{equation}
for all Borel sets $A,B \subset X$ of finite positive measure. On the other hand, it was shown by Juillet in \cite{Juillet-SubRiemannianNotCD} that on any strictly sub-Riemannian manifold $M$ equipped with its sub-Riemannian Carnot--Carath\'eodory metric $\d$ and any positive smooth measure $\mm$, and for any $\eps > 0$, there exist Borel sets $A,B \subset M$ of finite positive measure and $t \in (0,1)$, so that:
\[
\frac{\mm(B)}{\mm(A)} \in [1-\eps,1+\eps]  ~,~ \mm(Z_t(A,B)) \leq \frac{1}{2^{\mathcal{N}-n}} (1+\eps) \mm(A) . 
\]
Juxtaposing this with (\ref{eq:QBM}), it follows that necessarily $Q \geq 2^{\mathcal{N}-n}$, and hence the value $Q = 2^{N-n}$ is optimal in both our $\QCD(Q,0,N)$ condition and in Juillet's construction whenever $\mathcal{N} = N$ (note that we always have $N \geq \mathcal{N}$ as a consequence of \cite[Theorem 5]{BarilariRizzi-Interpolation}). As a side note, we mention that Juillet's construction moreover guarantees that $\diam(A \cup B) < R$ for any given $R > 0$, which shows that $(M,\d,\mm)$ as above does not satisfy $\CD(K',N')$ for any $K',N' \in \R$. 

Note that (\ref{eq:QBM}) with $Q = 2^{N-n}$ in the ideal sub-Riemannian setting and with $Q=4$ ($N=n+2$) for corank $1$ Carnot groups, follows immediately by Jensen's inequality from the Brunn--Minkowski inequalities of Barilari--Rizzi \cite[Theorem 9]{BarilariRizzi-Interpolation} and Balogh--Krist\'aly--Sipos \cite[Theorem 4.2]{BKS-CorankOneCarnot}, respectively:
\begin{equation} \label{eq:BM}
\mm(Z_t(A,B))^{\frac{1}{n}} \geq (1-t)^{\frac{N}{n}} \mm(A)^{\frac{1}{n}} + t^{\frac{N}{n}} \mm(B)^{\frac{1}{n}} .
\end{equation}
Juillet's construction therefore demonstrates the optimality of (\ref{eq:BM}) not only when one of the sets degenerates to a point (as e.g. in \cite{BarilariRizzi-Interpolation}), but also for sets of equal measures.

\subsection{Equivalent characterization of the $\QCD$ condition}

Finally, we conclude this work by mentioning an essentially equivalent characterization of the $\QCD$ condition which highlights again the connection to the $\CD$ definition. The simplest case to examine is when $K=0$. 

\begin{definition}[$\QCD_{\mm}(Q,0,N)$]
A Monge space $(X,\d,\mm)$ is said to satisfy the $\QCD_{\mm}(Q,0,N)$ condition, $Q \geq 1$, $N \in (1,\infty)$,  if for all 
$\mu_0,\mu_1 \in P_c(X)$ with $\mu_0,\mu_1 \ll \mm$, there exist a family of Borel measures $(\mm_t)_{t \in [0,1]}$ with $\mm \leq \mm_t \leq Q \mm$ on $\supp \mu_t$, so that the $W_2$ geodesic $(\mu_t)$ satisfies the $\CD(0,N)$ interpolation inequality \emph{with respect to $(\mm_t)$} --  namely, denoting $\tilde \rho_t := \frac{d\mu_t}{d\mm_t}$, we have for all $t \in (0,1)$:
\begin{equation} \label{eq:QCDm}
\tilde \rho^{-\frac{1}{N}}_t(\gamma_t) \geq (1-t) \tilde \rho^{-\frac{1}{N}}_0(\gamma_0) + t \tilde \rho^{-\frac{1}{N}}_1(\gamma_1) \;\;
 \text{for $\nu$-a.e. $\gamma \in \geo(X,\d)$}. 
 \end{equation}
 \end{definition} 

We claim that this definition is equivalent to the original $\QCD(Q,0,N)$ definition on Monge spaces satisfying $\MCP(K',N')$ for some $K' \in \R$ and $N' \in (1,\infty)$. Indeed, since $\rho_t := \frac{d\mu_t}{d \mm}$ satisfies $\tilde \rho_t \leq \rho_t \leq Q \tilde \rho_t$, if $\QCD_\mm(Q,0,N)$ holds then clearly $\QCD(Q,0,N)$ holds as well by passing from (\ref{eq:QCDm}) to (\ref{eq:def-QCD}). In the other direction, the $\MCP(K',N')$ condition guarantees that given $(\mu_t)$ as above, we may choose versions of the densities $\rho_t := \frac{d\mu_t}{d \mm}$ so that $(0,1) \ni t \mapsto \rho_t(\gamma_t)$ is continuous and upper semi-continuous at the end-points for $\nu$-a.e. $\gamma \in \geo(X,\d)$ (see \cite[Corollary 9.5 and Remark 9.9]{CavallettiEMilman-LocalToGlobal}). 
If the space in addition satisfies $\QCD(Q,0,N)$, then by considering all rational $t \in (0,1)$ and employing the latter continuity, it follows that there is a subset $G$ of geodesics $\gamma$ having full $\nu$-measure, so that $1/\rho_t(\gamma_t)$ satisfies (\ref{eq:def-QCD}) for all $t \in (0,1)$ and is therefore almost a $\QCD(Q,0,N+1)$ density on $[0,1]$ (this is where the assumption $K=0$ comes in handy) -- it satisfies all requirements but is only lower semi-continuitous at the end points $t \in \{0,1\}$. Nevertheless, inspecting the proof of Proposition \ref{prop:QCD-CD}, it follows that there exists a continuous $\CD(0,N+1)$ density $f_\gamma : [0,1] \rightarrow \R_+$ so that $1/\rho_t(\gamma_t) \leq f_\gamma(t) \leq Q /\rho_t(\gamma_t)$ for all $t \in (0,1)$, and also $1/\rho_t(\gamma_t) \leq f_\gamma(t)$ for $t \in \{0,1\}$ by lower semi-continuity. Since the space is Monge, one knows that there is a subset $H$ of geodesics of full $\nu$-measure for which $H \ni \gamma \mapsto \gamma_t$ is injective for all $t \in [0,1]$ (see e.g. \cite[Corollary 6.15]{CavallettiEMilman-LocalToGlobal}). Consequently, denoting $\xi_0 = \xi_1 \equiv 1$ and $\xi_t(\gamma_t) := f_\gamma(t) \rho_t(\gamma_t) \in [1,Q]$ for $\gamma \in G \cap H$ and $\xi_t = 0$ elsewhere for $t \in (0,1)$, it follows that $\xi_t$ is well defined, and standard arguments imply that $\xi_t$ is measurable. We can now define $\mm_t = \xi_t \mm$, and it readily follows that $\mm \leq \mm_t \leq Q \mm$ on $\supp \mu_t$. Since $\tilde \rho_t = \rho_t / \xi_t$ so that $1/\tilde \rho_t(\gamma_t) = f_\gamma(t)$ for $\gamma \in G \cap H$ and $t \in (0,1)$ and $1/\tilde \rho_t(\gamma_t) = 1 / \rho_t(\gamma_t)$ for $t\in \{0,1\}$, it follows that for all $t \in (0,1)$, for $\nu$-a.e. $\gamma \in \geo(X,\d)$:
\[
\tilde \rho^{-\frac{1}{N}}_t(\gamma_t) = f_\gamma^{\frac{1}{N}}(t)\geq (1-t) f_\gamma^{\frac{1}{N}}(0) + t f_\gamma^{\frac{1}{N}}(1) \geq
(1-t) \tilde \rho^{-\frac{1}{N}}_0(\gamma_0) + t \tilde \rho^{-\frac{1}{N}}_1(\gamma_1)  ,
\]
and so we confirm that  (\ref{eq:QCDm}) is satisfied, i.e. that the space verifies $\QCD_{\mm}(Q,0,N)$.

\setlinespacing{1.0}
\setlength{\bibspacing}{2pt}
\bibliography{../../../ConvexBib}
\end{document}